\newcolumntype{?}{!{\vrule width 1pt}}
\newtheorem{theorem}{Theorem}[section]
\newtheorem{lemma}[theorem]{Lemma}
\newtheorem{proposition}[theorem]{Proposition}
\newtheorem{conjecture}[theorem]{Conjecture}
\newtheorem{algorithm}[theorem]{Algorithm}
\theoremstyle{definition}
\newtheorem{example}[theorem]{Example}
\theoremstyle{remark}
\newtheorem{remark}[theorem]{Remark}
\numberwithin{equation}{section}
\DeclareMathAlphabet{\matheur}{U}{eur}{m}{n}
\newcommand{\ZZ}{\mathbb{Z}}
\newcommand{\NN}{\mathbb{N}}
\newcommand{\QQ}{\mathbb{Q}}
\newcommand{\RR}{\mathbb{R}}
\newcommand{\TT}{\mathbb{T}}
\newcommand{\CC}{\mathbb{C}}
\newcommand{\R}{\mathbb{R}}
\newcommand{\Q}{\mathbb{Q}}
\newcommand{\N}{\mathbb{N}}
\newcommand{\Z}{\mathbb{Z}}
\newcommand{\m}{\mathrm{m}}
\newcommand{\nn}{\mathrm{n}}
\newcommand{\dd}{\;\mathrm{d}}
\newcommand{\re}{\mathop{\mathrm{Re}}} 
\newcommand{\im}{\mathop{\mathrm{Im}}} 
\newcommand{\sgn}{\,\mathrm{sgn}}
\renewcommand{\d}{\mathrm d}
\DeclareMathOperator{\GL}{GL}
\DeclareMathOperator{\SL}{SL}
\DeclareMathOperator{\End}{End}
\DeclareMathOperator{\Gal}{Gal}
\DeclareMathOperator{\Hom}{Hom}
\DeclareMathOperator{\Res}{Res}
\DeclareMathOperator{\Norm}{Norm}
\mathchardef\pFcomma=\mathcode`, 
\newcommand*\pFq[5]{%
  \begingroup
  \begingroup\lccode`~=`,
    \lowercase{\endgroup\def~}{\pFcomma\mkern\pFqskip}%
  \mathcode`,=\string"8000
  {}_{#1}F_{#2}\Bigl(\genfrac..{0pt}{}{#3}{#4} \,\,\Big| \,\, #5\Bigr)%
  \endgroup
}
\begin{document}

\title[Determinants of Mahler measures and special values of $L$-functions]{Determinants of Mahler measures and special values of $L$-functions}

\author{Detchat Samart}
\address{Department of Mathematics, Faculty of Science, Burapha University, Chonburi 20131, Thailand} 
\email{petesamart@gmail.com}

\author{Zhengyu Tao}
\address{School of Mathematics, Hefei University of Technology, Hefei 230009, People's Republic of China}
\email{taozhy@hfut.edu.cn}



\date{\today}

\begin{abstract}
We consider Mahler measures of two well-studied families of bivariate polynomials, namely $P_t=x+x^{-1}+y+y^{-1}+\sqrt{t}$ and $Q_t=x^3+y^3+1-\sqrt[3]{t}xy$, where $t$ is a complex parameter. In the cases when the zero loci of these polynomials define CM elliptic curves over number fields, we derive general formulas for their Mahler measures in terms of $L$-values of cusp forms. For each family, we also classify all possible values of $t$ in number fields of degree not exceeding $4$ for which the corresponding elliptic curves have complex multiplication. Finally, for all such values of $t$ in totally real number fields of degree $n=2$ and $n=4$, corresponding to elliptic curves $\mathcal{F}_t$ (resp. $\mathcal{C}_t$), we prove that determinants of $n\times n$ matrices whose entries are Mahler measures corresponding to their Galois conjugates are non-zero rational multiples of $L^{(n)}(\mathcal{F}_t,0)$ (resp. $L^{(n)}(\mathcal{C}_t,0)$).
\end{abstract}
\keywords{Mahler measure, Elliptic curve, Special $L$-value, Beilinson's conjecture}

\maketitle

\section{Introduction}\label{S:Intro}
For an $n$-variable Laurent polynomial $P\in \CC[x_1^{\pm 1},\ldots, x_n^{\pm 1}]$, the (logarithmic) Mahler measure of $P$ is defined by 
\begin{align*}
\m(P)&= \frac{1}{(2\pi i)^n}\int_{\TT^n}\log |P(x_1,\dots,x_n)|\frac{\d x_1}{x_1}\dotsb\frac{\d x_n}{x_n},
\end{align*}
where $\TT^n=\{(x_1,\ldots,x_n)\in \CC^n : |x_1|=\cdots= |x_n|=1\}$. In other words, $\m(P)$ is the logarithm of the geometric mean of $|P|$ over the $n$-dimensional torus. For $n=1$, this quantity can always be computed explicitly in terms of the zeros of $P$ using Jensen's formula. On the other hand, if $n\ge 2$, $\m(P)$ is notoriously difficult to compute in general. In the case $n=2$, there are certain classes of polynomials whose Mahler measures are known to be related to special values of $L$-functions. Among these are the two families
\begin{align*}
P_t:=P_t(x,y)&=x+\frac{1}{x}+y+\frac{1}{y}+\sqrt{t},\\
Q_t:=Q_t(x,y)&=x^3+y^3+1-\sqrt[3]{t}xy,
\end{align*}
which have been studied extensively by many researchers (see, e.g., \cite{Boyd98,LR07,Rogers11,RZ12,RV99}). The zero loci of $P_t$ and $Q_t$, after desingularization, generically define families of smooth genus-one curves $\mathcal{E}_t$ and $\mathcal{B}_t$, which can be written in the Weierstrass form as follows:
\begin{align}
\mathcal{E}_t: Y^2 & = X^3+(t-8)X^2+16X, \label{E:Et}\\
\mathcal{B}_t: Y^2 & =X^3-27t^{2/3}X^2+216t^{1/3}(t-27)X-432(t-27)^2. \label{E:Bt}
\end{align}
Boyd \cite{Boyd98} originally conjectured based on his numerical computations that, for many large $t=k^2$ (resp. $t=k^3$) with $k\in \ZZ$, the following identities hold:
\begin{align}
\m(P_t) & \stackrel{?}=r_t L'(\mathcal{E}_t,0), \label{E:mPt}\\
\m(Q_t) & \stackrel{?}=s_t L'(\mathcal{B}_t,0),\label{E:mQt}
\end{align}
where $r_t,s_t\in \QQ$. Here the notation $A\stackrel{?}=B$ means that numerical values of $A$ and $B$ coincide at least for the first $25$ decimal digits. Rodriguez Villegas \cite[Table~4]{RV99} then extended Boyd's results by numerically verifying that for several $t\in \ZZ$  
\begin{equation}\label{E:mPt2}
\m(P_t)\stackrel{?}=r_t L'(\mathcal{F}_t,0),
\end{equation}
where $\mathcal{F}_t$ is a quadratic twist of $\mathcal{E}_t$ defined by 
\begin{equation} \label{E:Ft}
\mathcal{F}_t : Y^2=X^3+t(t-8)X^2+16t^2X.
\end{equation}
Although $\mathcal{E}_t$ and $\mathcal{F}_t$ are isomorphic over $\overline{\Q}$, their $L$-functions could be different since they are not always in the same isogeny class. Due to certain $K$-theoretical considerations, which shall be discussed in Section \ref{S:lemmas}, we will focus on the Weierstrass model $\mathcal{F}_t$ rather than $\mathcal{E}_t$ for the rest of this paper. Similarly, following Rodriguez Villegas' observation in \cite{RV99}, we will replace $\mathcal{B}_t$ with the Weierstrass model
\begin{equation}
\mathcal{C}_t: Y^2=X^3-27t^2X^2+216t^3(t-27)X-432t^4(t-27)^2 \label{E:Ct}
\end{equation}
and reformulate \eqref{E:mQt} as
\begin{equation}\label{E:mQt2}
\m(Q_t)\stackrel{?}=s_t L'(\mathcal{C}_t,0),
\end{equation} 
for sufficiently large $t\in \ZZ$. Note that these changes of the underlying curves do not affect the original (conjectural) identities \eqref{E:mPt} and \eqref{E:mQt}.

 To date, only a small number of identities \eqref{E:mPt2} and \eqref{E:mQt2} have been rigorously verified. Comprehensive lists of the proven cases can be found in \cite[Table~1]{Samart21} and \cite[Table~1]{Samart23}. For a survey of methods used in the proofs, the reader is referred to the research monograph \cite{BZ20}. Note that all rational values of $t$ for which $\mathcal{F}_t$ (resp. $\mathcal{C}_t$) has complex multiplication (CM) can be easily examined using the $j$-invariant of $\mathcal{F}_t$ (resp. $\mathcal{C}_t$). In fact, by equating \eqref{E:jFt} to the $13$ rational CM $j$-invariants, it can be seen that there are exactly $3$ rational values of $t$ for which $\mathcal{F}_t$ is a CM curve, namely $t=-16,8,$ and $32$, and the identity \eqref{E:mPt2} is known to be true in these cases (with $r_t=2,1,$ and $1$, respectively), thanks to work of Rodriguez Villegas \cite{RV99} and Rogers \cite{Rogers11}. Similarly, assuming $t\in \QQ$, the curve $\mathcal{C}_t$ has CM precisely when $t=-216,54,$ and $24$. In the first two cases, the identity \eqref{E:mQt2} holds with $s_t=3$ and $\frac32$, respectively \cite{Rogers11,RV99}. The case $t=24$ is slightly different due to a subtle property of the family $Q_t$ and will be discussed exclusively in Section \ref{S:lemmas}.
 
For brevity, we will use the following notation throughout this paper:
\begin{align*}
\mu(t)&:= \m(P_t)=\m\left(x+\frac{1}{x}+y+\frac{1}{y}+\sqrt{t}\right),\\
\nn(t)&:=\m(Q_t)=\m\left(x^3+y^3+1-\sqrt[3]{t}xy\right),\\
\nu(t)&:=\begin{cases}
	\nn(t), & \text{if }\sqrt[3]{t}\notin \mathcal{K}_Q^\circ,\\
	\tilde{\nn}(t), & \text{if }t\in(-1,27),
\end{cases}
\end{align*}
where $\mathcal{K}_Q$ is the set of $k\in\CC$ such that $Q_{k^3}(x,y)$ vanishes on $\TT^2$ and $\tilde{\nn}(t)$ is a modified version of $\nn(t)$, which is defined in \cite{Samart23} (see Section \ref{S:lemmas} below). Since all CM elliptic curves over $\QQ$ in the family $\mathcal{F}_t$ (resp. $\mathcal{C}_t$) have been classified completely and the corresponding Mahler measure (or modified Mahler measure) $\mu(t)$ (resp. $\nu(t)$) is known to be expressible in terms of $L'(\mathcal{F}_t,0)$ (resp. $L'(\mathcal{C}_t,0)$), it is natural to extend these results to an arbitrary number field. In particular, we are interested in the following problems:
\begin{itemize}
\item[(i)] Given a number field $K$, determine all $t\in K$ for which $\mathcal{F}_t$ (resp. $\mathcal{C}_t$) is a CM elliptic curve.
\item[(ii)] For each $t$ in (i), find an explicit relationship between $\mu(t)$ (resp. $\nu(t)$) and a special value of $L(\mathcal{F}_t,s)$ (resp. $L(\mathcal{C}_t,s)$).
\end{itemize}

Problem (ii) is largely motivated by the \textit{Beilinson's conjectures}, a collection of far-reaching conjectures in algebraic $K$-theory relating higher regulators, called the \textit{Beilinson's regulators}, to special $L$-values of algebraic varieties. For precise statements of these conjectures for general algebraic curves over number fields, the reader is referred to \cite{DdJZ06}. Over the past few years, some progress has been made on these problems for real quadratic number fields. For example, Guo, Ji, Liu, and Qin \cite{GJLQ24} give explicit expressions of $\mu(48\pm 32\sqrt{2})$ in terms of regulator integrals and prove that
\begin{equation}\label{E:GJLQ}
\det\begin{pmatrix}
\mu(48+32\sqrt{2}) & 2\mu(48-32\sqrt{2})\\
-2\mu(48-32\sqrt{2}) & 4\mu(48+32\sqrt{2})
\end{pmatrix} = 4L''(\mathcal{F}_{48\pm 32\sqrt{2}},0),
\end{equation}
which is equivalent to a weak version of Beilinson's conjecture for the curve $\mathcal{F}_{48\pm 32\sqrt{2}}$.
In \cite{TGW24}, the second author, Guo, and Wei proposed a systematic approach to find more identities of the form \eqref{E:GJLQ} which correspond to CM elliptic curves defined over real quadratic fields. In particular, they verify analogous identities for $t=272\pm 192\sqrt{2},8\pm 4\sqrt{3},128\pm 64\sqrt{3}, 8\pm 3\sqrt{7},$ and $2048\pm 768\sqrt{7}$. The second author and Guo \cite{TG25} apply a similar method to the family $Q_t$ and prove that
\begin{equation}\label{E:TG}
 \det\begin{pmatrix}
\nu(729+405\sqrt{3}) & \nu(729-405\sqrt{3})\\
\nu(729-405\sqrt{3}) & 4\nu(729+405\sqrt{3})
\end{pmatrix} = \frac{27}{8}L''(\mathcal{C}_{729\pm 405\sqrt{3}},0).
\end{equation}
In this paper, we extend these results by using a modified version of algorithms in \cite{TG25} and \cite{TGW24} to obtain a complete solution to Problem (i) and give illustrative examples  for number fields $K$ with $\deg K\le 4$. Then we tackle Problem (ii) by establishing identities of the form \eqref{E:GJLQ} and \eqref{E:TG} for all totally real algebraic numbers $t$ in (i) of degree at most $4$. For example, we show that for algebraic integers $t_1,t_2,t_3,$ and $t_4$ corresponding to $\#28$ (resp. $\#71$) in Table \ref{quarticGalorbs_table}, the following identities are true for every $t\in \{t_1,t_2,t_3,t_4\}$:
\begin{equation}\label{E:mmquartic1}
	\det\begin{pmatrix}
		\mu(t_1) & \mu(t_2) & \mu(t_3) & \mu(t_4)\\
		\mu(t_2) & -3\mu(t_1) & -3\mu(t_4) & \mu(t_3)\\
		\mu(t_3) & -3\mu(t_4) & -3\mu(t_1) & \mu(t_2)\\
		\mu(t_4) & \mu(t_3) & \mu(t_2) & \mu(t_1)
		\end{pmatrix} = \frac{1}{24}L^{(4)}(\mathcal{F}_t,0),
\end{equation}

\begin{equation}\label{E:mmquartic2}
	\det\begin{pmatrix}
		\nu(t_1) & \nu(t_2) & \nu(t_3) & \nu(t_4)\\
		\nu(t_2) & -5\nu(t_1) & \nu(t_4) & -5\nu(t_3)\\
		\nu(t_3) & \nu(t_4) & 37\nu(t_1) & 37\nu(t_2)\\
		\nu(t_4) & -5\nu(t_3) & 37\nu(t_2) & -185\nu(t_1)
		\end{pmatrix} = \frac{27}{8}L^{(4)}(\mathcal{C}_t,0).
\end{equation}
Note that $t_3$ and $t_4$ for $\#71$ are in $(-1,27)$ while $\sqrt[3]{t_1},\sqrt[3]{t_2}\notin \mathcal{K}_Q^\circ$, so the matrix in \eqref{E:mmquartic2} is a mixture of two (classical) Mahler measures $\nn(t_1),\nn(t_2)$ and two modified Mahler measures $\tilde{\nn}(t_3),\tilde{\nn}(t_4)$. All $4\times 4$ determinant identities for the families $\mathcal{F}_t$ and $\mathcal{C}_t$ are listed in Table~\ref{4x4identities_table} (in forms different from the above examples). To our knowledge, none of these identities is previously known in the literature.

As opposed to \eqref{E:GJLQ} and \eqref{E:TG}, we neither prove these identities using the regulator approach nor do we attempt to translate our results into the $K$-theoretic language (i.e. the Beilinson's conjectures). We focus purely on computational aspects of determinants of Mahler measures and special $L$-values. Our results rely crucially on the fact that, for each algebraic integer $t$ such that $\mathcal{F}_t$ (resp. $\mathcal{C}_t$) has CM, the Mahler measure $\mu(t)$ (resp. $\nu(t)$) can be written as a linear combination of $L$-values of newforms. This can be deduced easily using our general formulas in Proposition \ref{P:MMasLcusp}. After properly assigning coefficients to Mahler measures and aligning them in a $2\times 2$ or $4\times 4$ matrix, the determinant of this matrix becomes the product of those $L$-values (up to an explicit constant) which can then be written as an $L$-value of the corresponding elliptic curve. It should be noted that, given some knowledge about general forms of these matrices and numerical values of the related quantities with sufficient precision, these coefficients can be initially ``guessed'' using the \textsf{PSLQ} algorithm. We also make extensive use of various data from \emph{The L-functions and modular forms database (LMFDB)} \cite{LMFDB}. For example, all newforms appearing in this paper are collected in Table~\ref{newforms_table}, each with an LMFDB label. 

This paper is organized as follows. We give some background on Mahler measure and its connection with the Beilinson's conjectures in Section~\ref{S:lemmas}. This section also contains a brief discussion on the notion of modified Mahler measure (for the family $Q_t$) and some auxiliary lemmas which are needed for the subsequent sections. In Section~\ref{S:CM}, we propose and apply an algorithm for classifying CM elliptic curves in the families $\mathcal{F}_t$ and $\mathcal{C}_t$. Results for the curves defined over number fields of degree not exceeding $4$ will be summarized at the end of the section. Then we establish some general results about factorization of $L$-functions of CM elliptic curves over quadratic and biquadratic fields with certain properties in Section~\ref{S:Lfunc}. In particular, we prove that these elliptic curves are \textit{strongly modular} in the sense of \cite{GQ14}.  The main results of this paper are stated and proven in Section~\ref{S:results}. These include the general formulas for $\mu(t)$ (resp. $\nu(t)$), where $\mathcal{F}_t$ (resp. $\mathcal{C}_t$) is CM, in terms of $L$-values of cusp forms and the determinant formulas for Mahler measures of $\mathcal{F}_t$ (resp. $\mathcal{C}_t$) defined over totally real number fields of degree $2$ and $4$ in terms of $L$-values of the corresponding elliptic curves. Complete lists of related data are tabulated in Appendix~\ref{A:tables}. We give some examples of our results and illustrate the computational process for these examples in Section~\ref{S:examples}. Finally, we give some comments about related results and conjectures in Section~\ref{S:remarks}.

\section{Nuts and bolts}\label{S:lemmas}
For meromorphic functions $f$ and $g$ on a smooth curve $C$, we define the real differential $1$-form $\eta(f,g)$ on $C$ as
\begin{equation*}
\eta(f,g)=\log |f| \dd \arg(g)-\log|g|\dd \arg(f),
\end{equation*}
where $\d \arg(h)= \im\left(\frac{\d h}{h}\right)$. It is easily seen that $\eta(f,g)$ is closed, so its integral over a path $\gamma$ on $C$ only depends on its homotopy class. Let $P(x,y)\in \CC[x,y]$ be irreducible and let $P^*(x)$ be the leading coefficient of $P(x,y)$, seen as a polynomial in $y$. We define the \textit{Deninger path} of the curve $C_P:P(x,y)=0$ by
\[\gamma_P=\{(x,y)\in C_P : |x|=1, |y| \ge 1 \}.\]
Using Jensen's formula, Deninger proved the following result, which relates Mahler measure to the regulator integral.
\begin{theorem}[\cite{BZ20,Deninger97}]\label{T:Deninger}
Suppose $P(x,y)=0$ defines an elliptic curve $E$ and $\gamma_P$ is a finite union of smooth paths in $E$. Then
\begin{equation}\label{E:Deninger}
\m(P)-\m(P^*)=-\frac{1}{2\pi} \int_{\gamma_P}\eta(x,y).
\end{equation}
\end{theorem}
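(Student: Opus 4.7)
The plan is to evaluate both sides of \eqref{E:Deninger} by hand and match them via Jensen's formula, treating $P(x,y)$ as a polynomial in $y$.

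First, I would write $P(x,y)=P^*(x)\prod_{i=1}^d(y-y_i(x))$, where $d=\deg_y P$ and $y_1(x),\dots,y_d(x)$ are the branches of $C_P$ over the $x$-line. Substituting this factorization into the definition of $\m(P)$ and applying Jensen's formula to the inner integral in $y$ at each fixed $x$ with $|x|=1$ produces
\[
\m(P)-\m(P^*) \;=\; \frac{1}{2\pi}\int_0^{2\pi}\sum_{i=1}^d \log^+\!|y_i(\e^{\ii\theta})|\,\d\theta,
\]
where $\log^+\! a:=\max\{\log a,0\}$. This rewrites the left-hand side of \eqref{E:Deninger} as a single integral over $\{|x|=1\}$ with the branches above it.

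Next, to evaluate the right-hand side, I would use that on $\gamma_P$ we have $|x|=1$, hence $\log|x|=0$; parametrizing locally by $x=\e^{\ii\theta}$ gives $\d\arg x=\d\theta$ and collapses the defining $1$-form to $\eta(x,y)\big|_{\gamma_P}=-\log|y|\,\d\theta$. Projecting $\gamma_P$ down to $\{|x|=1\}$, the fiber over $\e^{\ii\theta}$ consists precisely of those branches $y_i(\e^{\ii\theta})$ with $|y_i(\e^{\ii\theta})|\ge 1$, so that
\[
-\frac{1}{2\pi}\int_{\gamma_P}\eta(x,y) \;=\; \frac{1}{2\pi}\int_0^{2\pi}\sum_{i:\,|y_i(\e^{\ii\theta})|\ge 1}\log|y_i(\e^{\ii\theta})|\,\d\theta.
\]
The constraint $|y_i|\ge 1$ converts $\log|y_i|$ into $\log^+\!|y_i|$, which matches the display obtained for $\m(P)-\m(P^*)$ above, proving the identity.

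The main technical point, and the one I would take care with, is justifying that the above manipulations are legitimate on $\gamma_P$; this is exactly what the hypothesis that $\gamma_P$ is a finite union of smooth paths buys. Concretely, I would need to check that the branches $y_i(\e^{\ii\theta})$ are piecewise smooth in $\theta$ (away from the finite bad set where $P^*(x)=0$ or two branches collide on $|x|=1$), that the orientation on $\gamma_P$ inherited from the counter-clockwise orientation of $\{|x|=1\}$ via projection is globally consistent so that the branch-by-branch integration is legal, and that the endpoints where $|y_i(\e^{\ii\theta})|=1$---where $\gamma_P$ meets $\TT^2$---contribute nothing because $\log|y|\,\d\theta$ vanishes there. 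Once these boundary and regularity issues are dispatched under the stated hypothesis, the two evaluations fit together to give \eqref{E:Deninger}.
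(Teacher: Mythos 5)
Your proof is correct and follows exactly the route the paper indicates (and that Deninger and Brunault--Zudilin use): factor $P$ in $y$, apply Jensen's formula fiberwise to get $\m(P)-\m(P^*)=\frac{1}{2\pi}\int_0^{2\pi}\sum_i\log^+|y_i(\e^{\ii\theta})|\,\d\theta$, and observe that on $\gamma_P$ the form $\eta(x,y)$ collapses to $-\log|y|\,\d\arg x$, so the regulator integral computes the same quantity. The regularity and orientation caveats you flag at the end are precisely the content of the hypothesis that $\gamma_P$ is a finite union of smooth paths, so nothing is missing.
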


We now state a version of Beilinson's conjecture for elliptic curves (see also \cite[\S 2]{BdLR24} and \cite{DdJZ06}). Let $E$ be an elliptic curve defined over a number field $K$ of degree $n$ and $X=\bigsqcup\limits_{\sigma\in\Hom_{\Q}(K,\CC)}E^\sigma(\CC),$ the disjoint union of $n$ connected Riemann surfaces, one for each embedding of $K$ into $\CC$. Then we have group isomorphisms $H_1(X,\ZZ)\cong\bigoplus_\sigma H_1(E^\sigma(\CC),\Z)\cong\Z^{2n}$. Since complex conjugation acts on $X$ then on $H_1(X,\Z)$ with
\[\overline{\gamma}\in\begin{cases}
	H_1(E^\sigma(\CC),\Z), & \text{if }\sigma\text{ is real,}\\
	H_1(E^{\bar{\sigma}}(\CC),\Z), & \text{if }\sigma\text{ is complex}
\end{cases}\quad \text{for }\gamma\in H_1(E^\sigma(\CC),\Z),\]
the subgroup $H_1(X,\ZZ)^-$ of $H_1(X,\ZZ)$ consisting of the homology classes $\gamma=(\gamma_\sigma)_\sigma$ such that $(\overline{\gamma}_\sigma)_{\bar{\sigma}}=(-\gamma_\sigma)_\sigma$ is a free abelian group of rank $n$. If $E$ has a \textit{tempered} planar model $P(x,y)=0$ (see \cite[\S 8]{RV99} for the definition of a tempered polynomial) such that $x^m$ and $y^n$, considered as functions on $E$, are defined over $K$ for some $m,n\in\Z\setminus\{0\}$, then we have $\{x,y\}\in K_2^T(E)\otimes_\Z\Q$, where $K_2^T(E)$ is the \emph{tame $K_2$ group of $E$}. Although we shall not give a precise definition of $K_2^T(E)$ here, it should be noted that its element can be written as $\sum_i\{f_i,g_i\}$, where $f_i$ and $g_i$ are rational functions in $K(E)$ and $\{\cdot,\cdot\}$ is the Steinberg symbol. By applying $\sigma$ to the coefficients, for each $M=\sum_i\{f_i,g_i\}\in K_2^T(E)$, we have $M^\sigma:=\sum_i\{f_i^\sigma,g_i^\sigma\}\in K_2^T(E^\sigma)$. If we assume further that the Deninger path $\gamma_P$ is closed, then the right-hand side of \eqref{E:Deninger} can be realized as $\langle\gamma_P,\{x,y\}\rangle$, where $\langle \cdot, \cdot \rangle$ is the pairing given by
\begin{equation}\label{E:pairing}
	\begin{aligned}
		H_1(X,\ZZ)^-\times K_2^T(E) &\rightarrow \RR\\
		{\textstyle \left( (\gamma_\sigma)_\sigma, \sum_i\{f_i,g_i\}\right)} & \mapsto \frac{1}{2\pi}\sum_\sigma\sum_i\int_{\gamma_\sigma}\eta(f_i^\sigma,g_i^\sigma).
	\end{aligned}
\end{equation}

Let $K_2^T(E)_\mathrm{int}$ be the \textit{integral tame $K_2$ group of $E$}, which is a subgroup of $K_2^T(E)$ consisting of elements with certain integrality conditions (see \cite[\S 1]{LdJ15} for a more precise definition). Beilinson's conjecture for $E$ can then be stated as follows:
\begin{itemize}
	\item [(i)]  $K_2^T(E)_\mathrm{int}/\text{torsion}$ is a free abelian group of rank $n$,
	\item [(ii)] if $\gamma_1,\dots,\gamma_n$ form a $\Z$-basis of $ H_1(X,\Z)^-$ and $M_1,\dots,M_n$ form a $\Z$-basis of $K_2^T(E)_\mathrm{int}$ up to torsion, then the \emph{Beilinson regulator}
	\begin{equation}\label{eq:nxn_regulator}
		R=|\det(\langle \gamma_i,M_j\rangle)_{1\le i,j \le n}|
	\end{equation}
	is a non-zero rational multiple of $\frac{1}{\pi^{2n}}L(E/K,2)$ (or $L^{(n)}(E/K,0)$ by the (conjectural) functional equation for $L(E/K,s)$).
\end{itemize}
Since it is difficult to compute the group $K_2^T(E)_\mathrm{int}$ in practice, it is certainly unclear how its $\ZZ$-basis can be constructed in order to get a determinant that satisfies the conjecture. In this paper, we only consider the following special setting. 

Let $E$ be an elliptic curve defined over some totally real Galois number field $K$ of degree $n$ such that $E^\sigma$ are $K$-isogenous to each other and let $P(x,y)=0$ be a tempered model for $E$ such that $\{x,y\}\in K_2^T(E)\otimes_\Z\Q$. For each $\sigma\in\Gal(K/\Q)$, fix a $K$-isogeny $\phi_\sigma:E\to E^\sigma$ and a nontrivial element $\gamma_\sigma\in H_1(E^\sigma(\CC),\ZZ)^-\subset H_1(X,\ZZ)^-$. Then each $\phi_\sigma$ induces a pullback map $\phi_\sigma^*: K_2^T(E^\sigma)\to K_2^T(E)$ given by
\[\phi_\sigma^*M=\sum_i\{\phi_\sigma^*f_i,\phi_\sigma^*g_i\},\text{ where } M=\sum_i\{f_i,g_i\}\in K_2^T(E^\sigma)\]
(see \cite{GJLQ24}). Therefore, we can immediately construct $n$ elements
\[\{M_1,\dots,M_n\}=\{\phi_\sigma^*\{x^\sigma,y^\sigma\} : \sigma\in\Gal(K/\Q)\}\subset K_2^T(E)\otimes_\Z\Q.\]
If the Deninger paths for the tempered models $P_\sigma(x,y)=0$ for $E^\sigma$ are all closed and $\gamma_1,\dots,\gamma_n$ are a relabeling of elements in $\{\gamma_\sigma : \sigma\in\Gal(K/\Q)\}$ in some order, then by Theorem \ref{T:Deninger}, each entry in the regulator \eqref{eq:nxn_regulator} should be a rational multiple of $\m(P_\sigma)$ for some $\sigma\in\Gal(K/\Q)$. If we know in addition that the elements $M_i$ are all in $K_2^T(E)_\mathrm{int}\otimes_\Z\Q$ and linearly independent, then by Beilinson's conjecture, it is expected that $R$ is a non-zero rational multiple of $\frac{1}{\pi^{2n}}L(E/K,2)$.

With explicit constructions of the objects mentioned above, results in \cite{GJLQ24}, \cite{TGW24}, and \cite{TG25} are achieved using this approach. However, as mentioned in Section \ref{S:Intro}, we will bypass this regulator-related calculation and directly search for identities between $n\times n$ determinants of Mahler measures and $L^{(n)}(E,0)$. In addition, since every Galois extension of $\QQ$ is either totally real or totally imaginary and Mahler measure is invariant under complex conjugation, we will consider only the totally real cases when searching for such identities.

For $t\in \CC\backslash\{0,16\}$, the curve $P_t(x,y)=0$ can be transformed into the Weierstrass form \eqref{E:Ft} via the birational map
\begin{equation}\label{E:mapFt}
x= \frac{2\sqrt{t}(tX+Y)}{X(X-4t)},\quad y = \frac{2\sqrt{t}(tX-Y)}{X(X-4t)}.
\end{equation}
Similarly, for $t\in \CC\backslash\{0,27\}$, we can transform the Hessian family $Q_t(x,y)=0$ into \eqref{E:Ct} using the map
\begin{equation}\label{E:mapCt}
x= \frac{-18t^{\frac23}X}{3tX+Y-36t^3+972t^2},\quad y = 1- \frac{2Y}{3tX+Y-36t^3+972t^2}.
\end{equation}
It is clear from \eqref{E:mapFt} (resp. \eqref{E:mapCt}) that if we choose $f=x^2$ and $g=y^2$ (resp. $f=x^3$ and $g=y$), then $f$ and $g$ become rational functions on $\mathcal{F}_t$ (resp. $\mathcal{C}_t$) defined over $K=\QQ(t)$. Therefore, by the above arguments, it is possible to relate Mahler measures to suitable regulator integrals appearing in the Beilinson's conjecture. To confirm this assertion, one needs more delicate calculation similar to that in \cite{GJLQ24,TG25,TGW24}, which is beyond the scope of this paper.

Before proving the main results of this section, let us briefly discuss the subtlety in the calculation of $\m(Q_t)$ for certain values of $t$, as mentioned in Section \ref{S:Intro}. Since $Q_t$ is \textit{non-reciprocal}, the set 
\[\mathcal{K}_Q=\{k\in \CC : \{Q_{k^3}=0\}\cap \TT^2 \ne \emptyset\},\]
which is a closed region bounded by the $3$-cusped hypocycloid as illustrated in Figure \ref{thehypocycloid}, has nonempty interior and it can be shown that $\mathcal{K}_Q^\circ\cap \R=(-1,3)$ \cite[\S 2B]{Boyd98}.
\begin{figure}[htbp]
	\centering
	\begin{tikzpicture}[scale=1]
		\draw[->](-2,0)--(3.5,0);
		\draw[->](0,-3)--(0,3);
		\path[fill=gray,opacity=0.3,domain=-pi:pi,smooth]plot({2*cos(\x r)+cos(2*\x r)},{2*sin(\x r)-sin(2*\x r)});
		\draw[domain=-pi:pi,smooth,line width=0.8pt]plot({2*cos(\x r)+cos(2*\x r)},{2*sin(\x r)-sin(2*\x r)});
		\fill (-1,0) circle (1pt);
		\fill (0,0) circle (1pt);
		\fill (3,0) circle (1pt);
		\fill (-1.5,2.598) circle (1pt);
		\fill (-1.5,-2.598) circle (1pt);
		\node[below,font=\small]at(-1.3,0){$-1$};
		\node[below,font=\small]at(0.2,0){$0$};
		\node[below,font=\small]at(3,0){$3$};
		\node[left,font=\small]at(-1.45,2.7){$3\omega$};
		\node[left,font=\small]at(-1.45,-2.7){$3\omega^2$};
	\end{tikzpicture}
	\caption{The closed region $\mathcal{K}_Q$ ($\omega=e^{\frac{2\pi i}{3}}$)}
	\label{thehypocycloid}
\end{figure}
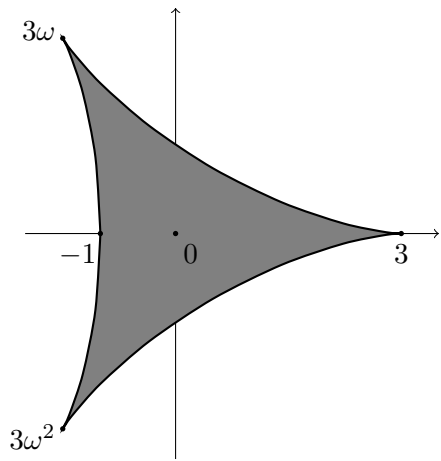
 If $k\in \mathcal{K}_Q^\circ$ (e.g. $k=\sqrt[3]{24}$), then the curve $\{Q_{k^3}=0\}$ intersects the torus $\mathbb{T}^2$ nontrivially and the conjecture of the form \eqref{E:mQt2} becomes invalid because its Deninger path is not closed. The Mahler measure of $Q_t$ for $t\in (-1,27)$ is investigated in recent work of the first author \cite{Samart23}, which shall be described below. Consider the family
\[\tilde{Q}_t:=\tilde{Q}_t(x,y)= y^2+(x^2-\sqrt[3]{t}x)y+x,\]
which is closely related to $Q_t$ since the zero locus of $\tilde{Q}_t$ is isomorphic and isogenous to $\mathcal{C}_t$ over $\QQ(t)$. Moreover, by a simple change of variables, we have that $\m(\tilde{Q}_t)=\m(Q_t)$. Observe that $\tilde{Q}_t$ is written as a quadratic polynomial in variable $y$, so its Mahler measure is much easier to compute than $\m(Q_t)$. Using the quadratic formula, we can factorize $\tilde{Q}_t$ as 
\[\tilde{Q}_t = (y-y_+(x))(y-y_-(x)),\]
where 
\[y_{\pm}(x)= -(x^2-\sqrt[3]{t}x)\left(\frac12\pm \sqrt{\frac14-\frac{1}{x(x-\sqrt[3]{t})^2}}\right).\]
For $t\in (-1,27)$, define 
\[I(t)=\frac{1}{2\pi} \int_{-c(t)}^{c(t)}\log|y_+(e^{i\theta})|\d \theta,\quad J(t)=\frac{1}{2\pi} \int_{c(t)}^{2\pi -c(t)}\log|y_+(e^{i\theta})|\d \theta,\]
where $c(t)=\cos^{-1}\left(\frac{\sqrt[3]{t}-1}{2}\right).$ By \cite[Lemma~3]{Samart23} and Jensen's formula, we have that $\m(\tilde{Q}_t)=I(t)+J(t)$. To close the Deninger path, the first author considered
\[\tilde{\nn}(t):=I(t)-2J(t),\]
which is referred to as the \emph{modified Mahler measure} of $\tilde{Q}_t$, and discovered from his computational experiments that for every non-zero integer $t\in (-1,27)$, 
\begin{equation*}
\tilde{\nn}(t)\stackrel{?}= s_tL'(\mathcal{C}_t,0),
\end{equation*}
where $s_t\in \QQ$ (see \cite[Table~2]{Samart23}). This identity for the case $t=24$, which corresponds to a CM elliptic curve $\mathcal{C}_{t}$ of conductor $27$ and $s_t=-3$, has recently been proven by the second author and Guo \cite[Theorem~1.5]{TG25}. It should be remarked that, if $t$ is an algebraic integer, cube roots of the Galois conjugates of $t$ can possibly scatter in both $\mathcal{K}_Q^\circ$ and its complement. In this case, the corresponding determinant formulas obtained in Section~\ref{S:results} involve both Mahler measures and modified Mahler measures (e.g. \eqref{E:mmquartic2}). For conciseness, we use $\nu(t)$ as defined in Section~\ref{S:Intro} to unify $\nn(t)$ and $\tilde{\nn}(t)$ in our results. Note further that the Galois conjugates of $t$ whose cube roots belong to the region $\mathcal{K}_Q^\circ$ (in fact the interval $(-1,3)$) are put in boldface in Table~\ref{2x2identities_table} and Table~\ref{quarticGalorbs_table}.

To apply our classification algorithm in Section \ref{S:CM}, we need some auxiliary lemmas which give explicit correspondences between period ratios of the curves $\mathcal{F}_t$ and $\mathcal{C}_t$ and the parameter $t$ via certain modular functions. Recall that the modular lambda function is a holomorphic function on the upper half-plane $\mathcal{H}:=\{\tau\in \CC : \im \tau>0\}$ defined by 
\begin{equation*}
\lambda(\tau)=16\left(\frac{\eta\left(\frac{\tau}{2}\right)\eta^2(2\tau)}{\eta^3(\tau)}\right)^8,
\end{equation*}
where $\eta(\tau)$ is the Dedekind eta function. Denote 
\begin{equation*}
t_P(\tau)=\frac{16}{\lambda(2\tau)}=\frac{\eta(2\tau)^{24}}{\eta(\tau)^8\eta(4\tau)^{16}},\quad t_Q(\tau)=27+\left(\frac{\eta(\tau)}{\eta(3\tau)}\right)^{12},
\end{equation*}
which are Hauptmoduls for $\Gamma_0(4)$ and $\Gamma_0(3)$, respectively.
\begin{lemma}\label{L:tP}
	Let $t\in \CC\backslash\{0,16\}$ and 
	\[\tau'= \frac{i}{2}\frac{\pFq{2}{1}{\frac12, \frac12}{1}{1-\frac{16}{t}}}{\pFq{2}{1}{\frac12, \frac12}{1}{\frac{16}{t}}} .\] Then we have
	\begin{itemize}
		\item [(i)]  $\mathcal{F}_t \cong \CC/ (\Z+\Z(4\tau'))$ and
		\item [(ii)] $t=t_P(\tau')$.
	\end{itemize}
\end{lemma}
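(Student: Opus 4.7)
The plan is to establish (ii) first, then deduce (i) via a short $j$-invariant comparison.

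For (ii), I would invoke the classical period-theoretic interpretation of the modular lambda function: for any $\tau\in\mathcal{H}$, the Legendre curve $y^2=x(x-1)(x-\lambda(\tau))$ has period ratio
\[
\tau=i\cdot \frac{\pFq{2}{1}{\tfrac12,\tfrac12}{1}{1-\lambda(\tau)}}{\pFq{2}{1}{\tfrac12,\tfrac12}{1}{\lambda(\tau)}},
\]
arising from the ratio of the complete elliptic integrals of the first kind. Specializing $\lambda=16/t$ identifies the right-hand side with $2\tau'$ as defined in the lemma, so $\lambda(2\tau')=16/t$; by the definition of $t_P$, this says $t_P(\tau')=t$, which proves (ii).

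For (i), it suffices to verify $j(\mathcal{F}_t)=j(4\tau')$, since complex elliptic curves are classified up to analytic isomorphism by their $j$-invariant. I would first put $\mathcal{F}_t$ in Legendre form: factoring \eqref{E:Ft} as $Y^2=X(X-r_1)(X-r_2)$ with $r_{1,2}=\bigl(t(8-t)\mp t\sqrt{t(t-16)}\bigr)/2$, the substitution $(X,Y)\mapsto(r_2X,\,r_2^{3/2}Y)$ realizes $\mathcal{F}_t$ over $\CC$ as the Legendre curve with parameter $\lambda_0=r_1/r_2$; rationalization gives
\[
\lambda_0=\frac{\bigl((8-t)-\sqrt{t(t-16)}\bigr)^2}{64}.
\]
Next I would apply Landen's transformation
\[
\sqrt{\lambda(2\tau)}=\frac{1-\sqrt{1-\lambda(\tau)}}{1+\sqrt{1-\lambda(\tau)}}
\]
at $\tau=2\tau'$ using $\lambda(2\tau')=16/t$ from (ii); after rationalization this produces
\[
\lambda(4\tau')=\frac{\bigl((t-8)-\sqrt{t(t-16)}\bigr)^2}{64}.
\]
The identity $(8-t)^2-t(t-16)=64$ then yields $\lambda_0\cdot\lambda(4\tau')=1$, i.e., $\lambda_0=1/\lambda(4\tau')$. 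Since the $j$-function on the Legendre family is invariant under $\lambda\mapsto 1/\lambda$, this gives $j(\mathcal{F}_t)=j(\lambda_0)=j(\lambda(4\tau'))=j(4\tau')$, proving (i).

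No substantive obstacle arises; the only point requiring some care is keeping the various square-root branches mutually consistent, but the final $j$-invariant equality is in any case insensitive to these choices via the $S_3$-symmetry of the Legendre parameter.
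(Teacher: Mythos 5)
Your proposal is correct, and for part (i) it takes a genuinely different route from the paper. For (ii) the two arguments coincide: both ultimately rest on the inversion property $\lambda\bigl(i\,{}_2F_1(\tfrac12,\tfrac12;1;1-\alpha)/{}_2F_1(\tfrac12,\tfrac12;1;\alpha)\bigr)=\alpha$ applied at $\alpha=16/t$, giving $\lambda(2\tau')=16/t$ and hence $t_P(\tau')=t$; you simply place this step first and then feed it into (i), whereas the paper derives (ii) last. For (i), the paper computes $j(\mathcal{F}_t)$ by computer algebra, evaluates $\mathfrak{f}^{24}(2\tau')$ via Ramanujan's $s_2$-function to get $j(2\tau')$, and then climbs from $j(2\tau')$ to $j(4\tau')$ using the explicit Hauptmodul parametrization of $X_0(2)$. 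You instead stay entirely inside the Legendre picture: you read off the $\lambda$-invariant $\lambda_0=\bigl((8-t)-\sqrt{t(t-16)}\bigr)^2/64$ of $\mathcal{F}_t$ from its $2$-torsion, double from $2\tau'$ to $4\tau'$ with Landen's transformation (the same modular equation the paper records in Lemma 2.4), verify $\lambda_0\,\lambda(4\tau')=1$ via $(t-8)^2-t(t-16)=64$, and conclude by the $S_3$-invariance of $j$ in $\lambda$. I checked the algebra ($\sqrt{\lambda(4\tau')}=\bigl((t-8)-\sqrt{t(t-16)}\bigr)/8$ and the product identity) and it is right. Your route is more self-contained — it avoids Ramanujan's Entry 12(v) and the $X_0(2)$ Hauptmodul, and makes the logical dependence of (i) on (ii) transparent — while the paper's route has the mild advantage of producing the closed form $j(\mathcal{F}_t)=(t^2-16t+16)^3/(t(t-16))$ along the way. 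Both treat branch choices with the same lightness; your closing remark that the $S_3$-symmetry of the $\lambda$-line renders the final $j$-equality insensitive to these choices is accurate and adequately addresses the only delicate point.
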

\begin{proof}
	To prove (i), it suffices to show that $j(\mathcal{F}_t)=j(4\tau').$ Using a standard computer algebra system such as \textsc{Maple} or \textsc{SageMath}, one sees readily that 
	\begin{equation}\label{E:jFt}
		j(\mathcal{F}_t)=\frac{(t^2-16t+16)^3}{t(t-16)}.
	\end{equation}
	Next, we will compute $j(4\tau')$. It is a well-known fact that for any $\tau\in \mathcal{H}$
	\begin{equation}\label{E:j}
		j(\tau)=\left(\frac{\mathfrak{f}^{24}(\tau)-16}{\mathfrak{f}^{8}(\tau)}\right)^3,
	\end{equation}
	where $\mathfrak{f}(\tau)=e^{-\frac{\pi i}{24}}\frac{\eta\left(\frac{\tau+1}{2}\right)}{\eta(\tau)}$ (see \cite[\S 1]{YZ97}). By Ramanujan's theory of elliptic functions \cite[Entry~12(v)]{Berndt91}, if $s_2(q)= \mathfrak{f}^{24}(2\tau),$ where $q=e^{2\pi i \tau}$, and \[q_2(\alpha)=\exp\left(-\pi \frac{\pFq{2}{1}{\frac12, \frac12}{1}{1-\alpha}}{\pFq{2}{1}{\frac12, \frac12}{1}{\alpha}} \right),\]
	then $s_2(q_2(\alpha))=\frac{16}{\alpha(1-\alpha)}.$ Hence
	\begin{equation*}
		\mathfrak{f}^{24}(2\tau')=s_2\left(q_2\left(\frac{16}{t}\right)\right)=\frac{t^2}{t-16}.
	\end{equation*}
	By \eqref{E:j}, it follows that 
	\[j(2\tau')=\frac{(t^2-16t+256)^3}{t^2(t-16)^2}.\]
	Since $X_0(2)$ has genus zero, there exists a modular function $f(\tau)$ such that $\Q(j(\tau),j(2\tau))=\Q(f(\tau))$. Moreover, such $f(\tau)$ can be chosen so that
	\[j(\tau)= \frac{(f(\tau)+256)^3}{f(\tau)^2},\quad j(2\tau)= \frac{(f(\tau)+16)^3}{f(\tau)}\]
	(see \cite[P2]{vH15}). Hence we can deduce that 
	\[j(4\tau')=\frac{(t^2-16t+16)^3}{t(t-16)}=j(\mathcal{F}_t).\]
	The identity (ii) follows directly from the inverse property of the lambda function \cite[b7174d]{Lambda}.
\end{proof}

\begin{lemma}\label{L:tQ}
	Let $t\in \CC\backslash\{0,27\}$ and
	\[\tau'= \frac{i}{\sqrt{3}} \frac{\pFq{2}{1}{\frac13, \frac23}{1}{1-\frac{27}{t}}}{\pFq{2}{1}{\frac13, \frac23}{1}{\frac{27}{t}}}.\] Then we have 
	\begin{itemize}
		\item [(i)]  $\mathcal{C}_t \cong \CC/(\Z+\Z\tau')$ and
		\item [(ii)] $t=t_Q(\tau')$.
	\end{itemize}
\end{lemma}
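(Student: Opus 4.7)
The argument parallels that of Lemma~\ref{L:tP}, replacing Ramanujan's theory of elliptic functions of signature $2$ by its signature-$3$ analogue. This is natural because ${}_2F_1(\tfrac13,\tfrac23;1;\cdot)$ is the period attached to elliptic curves with a rational $3$-isogeny, whereas ${}_2F_1(\tfrac12,\tfrac12;1;\cdot)$ plays that role in level $2$. Accordingly, the Hauptmodul $t_Q$ for $\Gamma_0(3)$ will play the role that $t_P$ plays for $\Gamma_0(4)$ in the previous lemma, and the covering map $X_0(3)\to X(1)$ will take the place of the $X_0(2)\to X(1)$ covering used there.

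For part (i), I would first compute $j(\mathcal{C}_t)$ directly from the Weierstrass model \eqref{E:Ct} using a CAS, obtaining an explicit rational function of $t$ whose only singularities occur at the degenerate fibers $t=0$ and $t=27$. Next, to compute $j(\tau')$, I would invoke Ramanujan's alternative theory of elliptic functions in signature $3$ (cf.~\cite[Entry~12]{Berndt91} together with its cubic analogues): setting $\alpha = 27/t$ and
\[q_3(\alpha) := \exp\left(-\frac{2\pi}{\sqrt{3}} \cdot \frac{\pFq{2}{1}{\tfrac13,\tfrac23}{1}{1-\alpha}}{\pFq{2}{1}{\tfrac13,\tfrac23}{1}{\alpha}}\right),\]
one has $e^{2\pi i \tau'} = q_3(27/t)$, and there is an explicit identity expressing the eta-quotient $(\eta(\tau)/\eta(3\tau))^{12}$ evaluated at $\tau'$, and hence $t_Q(\tau')$ itself, as an explicit rational function of $t$. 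Since $X_0(3)$ has genus zero, there is an equally explicit polynomial relation $j = \Phi(t_Q)$ for some $\Phi\in\QQ(T)$, and composing produces a rational expression for $j(\tau')$ in $t$ which one checks agrees with $j(\mathcal{C}_t)$. This proves (i), and (ii) then follows at once from the previous computation of $t_Q(\tau')$ as a function of $t$ together with the inverse property of the hypergeometric parametrization of the signature-$3$ theory.

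\textbf{Main obstacle.} The most delicate step is branch selection: a priori, the assignment $\tau'\mapsto j(\tau')$ is invariant under $\Gamma_0(3)$, so the formula for $\tau'$ only determines $t$ up to a $\Gamma(1)/\Gamma_0(3)$-coset action, which can permute $t$ among its conjugate roots of $\Phi(T) = j(\mathcal{C}_t)$. One must therefore verify that the specific normalization $i/\sqrt{3}$ in the definition of $\tau'$ places it in the correct fundamental domain to recover exactly $t$. This is the signature-$3$ counterpart of the choice of $4\tau'$ (rather than $2\tau'$ or $\tau'$) in Lemma~\ref{L:tP}, and can be settled by matching leading $q$-expansions in the limit $t\to\infty$ (equivalently $\alpha\to 0$), where both sides degenerate to the nodal cubic in a controlled way.
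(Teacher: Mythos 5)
Your plan works, and for part (ii) it proceeds essentially as the paper does: Ramanujan's signature-$3$ inversion $s_3(q_3(\alpha))=\tfrac{27}{\alpha(1-\alpha)}$ with $\alpha=27/t$ yields $\tfrac{t_Q(\tau')^2}{t_Q(\tau')-27}=\tfrac{t^2}{t-27}$, hence $t_Q(\tau')\in\{t,\ \tfrac{27t}{t-27}\}$; the Fricke ambiguity you flag is real, and your $t\to\infty$ asymptotic comparison ($t_Q(\tau')\sim q^{-1}\to\infty$ versus $\tfrac{27t}{t-27}\to 27$) settles it --- a point the paper leaves implicit when it calls (ii) ``immediate''. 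For part (i), however, you take a genuinely different route: you transplant the strategy of Lemma~\ref{L:tP}, computing $j(\mathcal{C}_t)$ from \eqref{E:Ct} by CAS and comparing with $j(\tau')$ obtained from (ii) together with the degree-four genus-zero relation $j=\tfrac{t_Q(t_Q+216)^3}{(t_Q-27)^3}$ on $X_0(3)$ (the second identity of Lemma~\ref{L:tPtQ}; both sides indeed equal $\tfrac{t(t+216)^3}{(t-27)^3}$). The paper instead cites \cite[Lemma~3.4]{Samart16} for the isomorphism when $t\in(27,\infty)$ and extends to all of $\CC\setminus\{0,27\}$ by observing that $j(\mathcal{C}_t)$ and $j(\tau')$ are meromorphic in $t$. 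Your version is more self-contained and makes the branch selection explicit, whereas the paper's is shorter because it outsources the real-parameter case. Two minor points: the signature-$3$ theory you need is the one cited in \cite[\S 33]{Berndt98}, not the classical signature-$2$ entries of \cite{Berndt91} used in Lemma~\ref{L:tP}; and the relation $j=\Phi(t_Q)$ appears in the paper only afterwards as Lemma~\ref{L:tPtQ}, so you would need to establish it first --- there is no circularity, since its proof does not depend on Lemma~\ref{L:tQ}.
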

\begin{proof}
	The isomorphism (i) for $t\in (27,\infty)$ is proven in \cite[Lemma~3.4]{Samart16} (in a slightly different form under $\tau'\rightarrow -\frac{1}{\tau'}$). Then we can easily extend this result to $t\in \CC\backslash\{0,27\}$ using the fact that both $j(\mathcal{C}_t)$ and $j(\tau')$ (as a function of $t$) are meromorphic functions. The identity (ii) is also an immediate consequence of Ramanujan's theory of elliptic functions in signature $3$ \cite[\S 33]{Berndt98}. More precisely, if $s_3(q)=\left(27\left(\frac{\eta(3\tau)}{\eta(\tau)}\right)^6+\left(\frac{\eta(\tau)}{\eta(3\tau)}\right)^6\right)^2$ and \[q_3(\alpha)=\exp\left(-\frac{2\pi}{\sqrt{3}}\frac{\pFq{2}{1}{\frac13, \frac23}{1}{1-\alpha}}{\pFq{2}{1}{\frac13, \frac23}{1}{\alpha}} \right),\]
	then $s_3(q_3(\alpha))=\frac{27}{\alpha(1-\alpha)}.$
\end{proof}

\begin{lemma}\label{L:tPtQ}
For $\tau\in \mathcal{H}$, we have
\[j(\tau)=\frac{(t_P(\tau)^2+224t_P(\tau)+256)^3}{t_P(\tau)(t_P(\tau)-16)^4}=\frac{t_Q(\tau)(t_Q(\tau)+216)^3}{(t_Q(\tau)-27)^3}.\]
\end{lemma}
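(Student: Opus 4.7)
My plan is to prove the two equalities separately, each by expressing $j(\tau)$ as a rational function of the relevant Hauptmodul via a classical modular equation of the appropriate level.

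For the first equality, I would build directly on the computation carried out in the proof of Lemma~\ref{L:tP}, which establishes
\[
j(2\tau)=\frac{(t^2-16t+256)^3}{t^2(t-16)^2},\qquad t:=t_P(\tau),
\]
and also invokes the $X_0(2)$ parametrization
\[
j(\tau)=\frac{(g+256)^3}{g^2},\qquad j(2\tau)=\frac{(g+16)^3}{g},
\]
where $g$ is a Hauptmodul of $\Gamma_0(2)$. A direct algebraic check shows that $g=(t-16)^2/t$ reproduces exactly the displayed formula for $j(2\tau)$, since $g+16=(t^2-16t+256)/t$ and $(g+16)^3/g$ simplifies accordingly. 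This identifies $g$ as a specific degree-$2$ rational function of $t$ compatible with the inclusion $\Gamma_0(4)\subset\Gamma_0(2)$; the correct branch is pinned down by matching the leading $q$-expansions of $g$ and $t$ at $i\infty$. Substituting into the formula for $j(\tau)$ and using $g+256=(t^2+224t+256)/t$ then yields the desired identity after simplification.

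For the second equality, I would invoke the classical eta-quotient formula
\[
j(\tau)=\frac{(s+27)(s+243)^3}{s^3},\qquad s:=\bigl(\eta(\tau)/\eta(3\tau)\bigr)^{12},
\]
which expresses $j$ as a degree-$4$ rational function of the standard Hauptmodul of $\Gamma_0(3)$. Since $t_Q(\tau)=27+s$ by definition, the substitution $s=t_Q(\tau)-27$ gives $j(\tau)=t_Q(t_Q+216)^3/(t_Q-27)^3$ immediately.

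The main obstacle is providing the two input modular equations: the $X_0(2)$ parametrization of $j(\tau)$ and $j(2\tau)$, and the classical $\Gamma_0(3)$ formula for $j(\tau)$ in terms of $s$. Both are standard (the first is already cited in the proof of Lemma~\ref{L:tP}), but a self-contained verification reduces in each case to matching $q$-expansion coefficients up to an order determined by the pole structure on $X_0(2)$ or $X_0(3)$, respectively, and confirming that both sides are modular functions on the same curve with the same divisor. Once these input identities are in place, the remainder of the argument is routine algebraic substitution.
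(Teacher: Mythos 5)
Your proposal is correct, but it takes a somewhat different route from the paper, which is worth comparing. For the first equality the paper works entirely through the modular lambda function: since $t_P(\tau)=16/\lambda(2\tau)$, it combines the classical formula $j(\tau)=\frac{256(1-\lambda+\lambda^2)^3}{\lambda^2(1-\lambda)^2}$ with the duplication (Landen-type) modular equation $\lambda(\tau)=\frac{4\sqrt{\lambda(2\tau)}}{(1+\sqrt{\lambda(2\tau)})^2}$, and the square roots cancel after simplification. You instead go through the $X_0(2)$ Hauptmodul $g$ with $j(\tau)=(g+256)^3/g^2$, $j(2\tau)=(g+16)^3/g$, identifying $g=(t_P-16)^2/t_P$; your algebra checks out ($g+16=(t^2-16t+256)/t$ and $g+256=(t^2+224t+256)/t$ give exactly the two displayed formulas). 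Two small caveats: the formula $j(2\tau)=\frac{(t^2-16t+256)^3}{t^2(t-16)^2}$ with $t=t_P(\tau)$ is established in the proof of Lemma \ref{L:tP} only at the particular points $\tau'$ parametrized by the hypergeometric ratio, so you need the (routine) remark that it propagates to all $\tau\in\mathcal{H}$ because both sides are $\Gamma_0(4)$-modular functions agreeing on a set with an accumulation point; and, as you note, the candidate $g=(t-16)^2/t$ must be distinguished from the other roots of the cubic $(g+16)^3/g=j(2\tau)$ by a $q$-expansion check. For the second equality the paper simply cites \cite[Thm.~2.9]{Samart15}, whereas you invoke the classical $\Gamma_0(3)$ identity $j(\tau)=\frac{(s+27)(s+243)^3}{s^3}$ with $s=(\eta(\tau)/\eta(3\tau))^{12}$; this is the correct formula (its $q$-expansion matches $q^{-1}+744+196884q+\cdots$) and the substitution $s=t_Q-27$ is immediate, so your argument is essentially a self-contained version of what the citation supplies. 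Overall your route trades the paper's $\lambda$-function identities for the $X_0(2)$ and $X_0(3)$ Hauptmodul parametrizations; both rest on classical modular equations of the same levels, and yours has the mild advantage of reusing machinery already present in the proof of Lemma \ref{L:tP}.
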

\begin{proof}
The first equality follows from the well-known identity 
\[j(\tau)=\frac{256(1-\lambda+\lambda^2)^3}{\lambda^2(1-\lambda)^2}\]
and the modular equation 
\[\lambda(\tau)=\frac{4\sqrt{\lambda(2\tau)}}{\bigl(1+\sqrt{\lambda(2\tau)}\bigr)^2}\]
(see \cite[Theorem~4.4, Eq. (4.1.3)]{BB87}).
For the second equality, see the proof of \cite[Theorem~2.9]{Samart15}.
\end{proof}

\section{Algorithm for CM curve classification} \label{S:CM}
In this section, we propose an algorithm which can be used to find an exhaustive list of CM elliptic curves in the family $\mathcal{F}_t$ (resp. $\mathcal{C}_t$), up to any given upper bound for the algebraic degree of $t$. Let us first recall the following results of Rodriguez Villegas \cite[\S 14]{RV99} regarding formulas for Mahler measures $\mu(t)$ and $\nu(t)$, suitably parametrized by $t_P(\tau)$ and $t_Q(\tau)$, respectively,  in terms of Eisenstein-Kronecker series.
\begin{theorem}[\cite{RV99}]\label{RVformula}
	Let $F_P\subset\mathcal{H}$ be the fundamental domain of the congruence subgroup $\Gamma_0(4)$ formed by the geodesic triangle of vertices $ i\infty,0,1/2$ and its reflection along the imaginary axis (see Figure \ref{fig:F_PandF_Q} (a)). Also, let $F_Q\subset\mathcal{H}$ be the fundamental domain of $\Gamma_0(3)$ formed by the geodesic triangle of vertices $i\infty,0,\tfrac{1+i/\sqrt{3}}{2}$ and its reflection along the imaginary axis (see Figure \ref{fig:F_PandF_Q} (b)). 
	\begin{itemize}
		\item[(i)] For any $\tau_0\in F_P$, we have
		\begin{equation}\label{eq:RVformu}
			\mu(t_P(\tau_0))=\frac{16\im(\tau_0)}{\pi^2}\underset{m,n\in\Z}{{\sum}'}\frac{\chi_{-4}(n)(4m\re(\tau_0)+n)}{\left|4m\tau_0+n\right|^4}.
		\end{equation}
		\item[(ii)] For any $\tau_0\in F_Q$, we have
		\begin{equation}\label{eq:RVfornu}
			\nu(t_Q(\tau_0))=\begin{cases}
				\frac{27\sqrt{3}\im(\tau_0)}{4\pi^2}\underset{m,n\in\Z}{{\sum}'}\frac{\chi_{-3}(n)(3m\re(\tau_0)+n)}{\left|3m\tau_0+n\right|^4}, & \text{if }\sqrt[3]{t_Q(\tau_0)}\notin\mathcal{K}^\circ_Q,\\
				\frac{27\sqrt{3}\im(\tau_0)}{(1-3\sgn(t_Q(\tau_0)))\pi^2}\underset{m,n\in\Z}{{\sum}'}\frac{\chi_{-3}(n)(3m\re(\tau_0)+n)}{\left|3m\tau_0+n\right|^4}, & \text{if }t_Q(\tau_0)\in(-1,27),
			\end{cases}
		\end{equation}
	\end{itemize}
	where $\chi_{-f}=\left(\frac{-f}{\cdot}\right)$ and $\underset{m,n\in\Z}{\sum'}$ means that $(m,n)=(0,0)$ is excluded from the summation.
\end{theorem}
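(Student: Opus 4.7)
The plan is to derive both identities by combining Deninger's regulator formula (Theorem \ref{T:Deninger}) with the representation of the Steinberg symbol $\{x,y\}$ via Siegel units on the complex torus, and then identifying the resulting regulator integral with a Kronecker--Eisenstein series for the appropriate lattice. By Theorem \ref{T:Deninger}, whenever the Deninger path $\gamma_P$ is closed the Mahler measure equals $-\frac{1}{2\pi}\int_{\gamma_P}\eta(x,y)$ (after subtracting $\m(P^*)$, which vanishes for our $P_t$ and for the auxiliary $\tilde{Q}_t$). In the modified case $t\in(-1,27)$, the path is not closed, but the linear combination $I(t)-2J(t)$ defining $\tilde{\mathrm n}(t)$ is arranged so that the boundary contributions cancel; this is exactly what produces the extra factor $1-3\,\sgn(t_Q(\tau_0))$ in the denominator of the second case of \eqref{eq:RVfornu}.

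Using Lemma \ref{L:tP} (resp.\ Lemma \ref{L:tQ}), I transport the computation from the Weierstrass model to the complex torus $\mathbb{C}/\Lambda$ with $\Lambda=\mathbb{Z}+\mathbb{Z}(4\tau')$ (resp.\ $\Lambda=\mathbb{Z}+\mathbb{Z}\tau'$). Under the birational maps \eqref{E:mapFt} and \eqref{E:mapCt}, the functions $x$ and $y$ pull back to ratios of classical theta functions, i.e.\ to products of Siegel units supported on the $4$-torsion (resp.\ $3$-torsion) subgroup. The structural fact driving the character is that the involution $(x,y)\mapsto(x^{-1},y^{-1})$ of $P_t$ corresponds to multiplication by $-1$ on the torus, while the rotation $(x,y)\mapsto(\omega x,\omega^{-1}y)$ of $Q_t$ corresponds to multiplication by $\omega$; symmetrising $\{x,y\}$ under these automorphisms groups the torsion-divisor contributions by the orbits of $(\mathbb{Z}/4\mathbb{Z})^\times$ (resp.\ $(\mathbb{Z}/3\mathbb{Z})^\times$), which is how the Dirichlet character $\chi_{-4}$ (resp.\ $\chi_{-3}$) enters. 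The regulator integral of each elementary Siegel-unit symbol $\eta(u_a,u_b)$ over a period cycle can then be evaluated using Bloch's formula (or the Kronecker second limit formula) as a real-analytic Eisenstein--Kronecker series on $\Lambda$, and summing the resulting contributions collapses the double series to the one-character Eisenstein sum appearing in \eqref{eq:RVformu} and \eqref{eq:RVfornu}, with the lattice-normalisation factors $16\im(\tau_0)/\pi^2$ and $27\sqrt{3}\,\im(\tau_0)/(4\pi^2)$ emerging from the rescaling $\Lambda\to\mathbb{Z}+\mathbb{Z}\tau$.

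The principal obstacle I anticipate is the explicit book-keeping in the middle step: writing $\{x,y\}$ as a $\mathbb{Q}$-linear combination of symbols $\{u_a,u_b\}$ of Siegel units and verifying that after symmetrisation over the $4$-torsion (resp.\ $3$-torsion) the coefficient $\chi_{-f}(n)$ appears in the numerator with exactly the right sign, while the Deninger cycle $\gamma_P$ pulls back to a prescribed $\mathbb{Z}$-cycle on $\Lambda$ (this is where the condition $\tau_0\in F_P$ or $F_Q$ is used to pin down a single fundamental period). Equally delicate is the modified-Mahler-measure case: one must show that the linear combination $I(t)-2J(t)$ precisely realises an integration of $\eta(x,y)$ along a closed cycle that is the same, up to the factor $1-3\,\sgn(t_Q(\tau_0))$, as the cycle appearing in the unmodified case. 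Once these two bookkeeping points are in place, the remaining work is a direct analytic identification of the regulator integral with the stated Kronecker--Eisenstein series.
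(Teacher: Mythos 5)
First, a point of reference: the paper does not prove this statement. It is quoted from Rodriguez Villegas \cite{RV99}, with the second branch of \eqref{eq:RVfornu} (the case $t_Q(\tau_0)\in(-1,27)$) imported from \cite{TG24}. So there is no in-paper proof to compare against, and your proposal has to stand on its own. For what it is worth, your outline follows the regulator-theoretic reading of the computation (Deninger's formula, Siegel units supported on torsion, Bloch's formula / Kronecker's second limit theorem), whereas \cite{RV99} derives \eqref{eq:RVformu} more directly: the $t$-derivative of the Mahler measure is a hypergeometric period, hence a weight-one Eisenstein series in $\tau$; integrating the resulting $q$-expansion and performing a Fourier resummation yields the Eisenstein--Kronecker series. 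Both routes are viable, but yours is a plan rather than a proof, and the steps you defer are exactly where the content of the theorem lies.

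Concretely, three gaps. (i) Deninger's formula in the form you use it requires the Deninger path to be a closed cycle, but the theorem is asserted for \emph{all} $\tau_0\in F_P$, including those with $t_P(\tau_0)\in[0,16]$, where $P_t$ vanishes on $\TT^2$ and the path is not closed. The identity there follows only by a continuity/analytic-continuation argument exploiting that this exceptional set has empty interior in $\CC$ --- an argument you do not supply, and one which has no analogue for $Q_t$ (where $\mathcal{K}_Q$ has nonempty interior), which is precisely why $\nu$ needs two branches at all. (ii) The assertion that $I(t)-2J(t)$ ``is arranged so that the boundary contributions cancel'' and that this ``produces'' the factor $1-3\sgn(t_Q(\tau_0))$ is a restatement of what must be proved, not a derivation: obtaining that factor (which switches between $-2$ and $4$ with the sign of $t$) is the substance of \cite{TG24}, and requires identifying the specific closed cycle that $I-2J$ integrates $\eta(x,y)$ over and comparing it, in $H_1$, with the cycle used in the unmodified case. (iii) The decomposition of $\{x,y\}$ into Siegel-unit symbols, the emergence of $\chi_{-4}$ and $\chi_{-3}$ from the torsion orbits, and the precise constants $16$ and $27\sqrt{3}/4$ constitute the entire quantitative content of \eqref{eq:RVformu}--\eqref{eq:RVfornu}; you flag this as ``book-keeping'' but leave it unexecuted. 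Until (i)--(iii) are carried out the statement is not established.
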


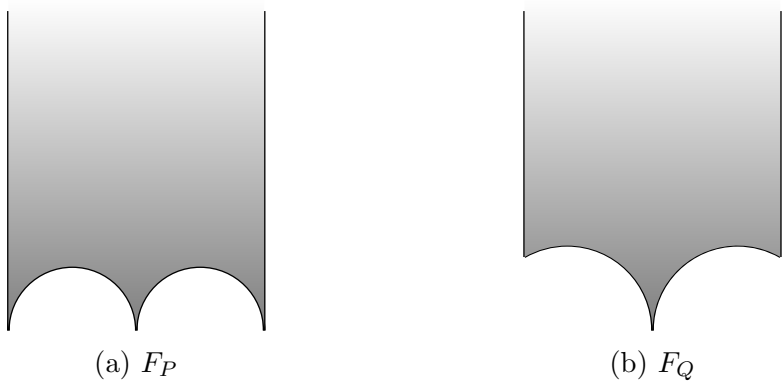
\begin{figure}[htbp]
	\centering
	\begin{subfigure}{.45\textwidth}
		\centering
		\begin{tikzpicture}[scale=1.7]
			\draw[line width=1pt] (-1,0) -- (-1,2.5);
			\draw[line width=1pt] (1,0) -- (1,2.5);
			\draw[line width=1pt] (0,0) arc(0:180:0.5);
			\draw[line width=1pt] (0,0) arc(180:0:0.5);
			\shade[bottom color=gray,top color=white,opacity=0.35] (-1,0) -- (-1,2.59) -- (1,2.59) -- (1,0) arc(0:180:0.5) arc(0:180:0.5);
		\end{tikzpicture}
		\caption*{(a) $F_P$}
	\end{subfigure}\hspace{-20pt}
	\begin{subfigure}{.45\textwidth}
		\centering
		\begin{tikzpicture}[scale=1.7]
			\draw[line width=1pt] (-1,0.575) -- (-1,2.5);
			\draw[line width=1pt] (1,0.575) -- (1,2.5);
			\draw[line width=1pt] (0,0) arc(0:120:0.6666);
			\draw[line width=1pt] (0,0) arc(180:60:0.6666);
			\shade[bottom color=gray,top color=white,opacity=0.35] (-1,0.575) -- (-1,2.6) -- (1,2.6) -- (1,0.575) arc (60:180:0.6666) arc (0:120:0.6666);
		\end{tikzpicture}
		\caption*{(b) $F_Q$}
	\end{subfigure}
	\caption{Fundamental domains of $\Gamma_0(4)$ and $\Gamma_0(3)$}
	\label{fig:F_PandF_Q}
\end{figure}

Here we have included the additional formula for the case $t\in (-1,27)$ in \eqref{eq:RVfornu}, which is proven by the second author and Guo \cite[Eq.~(4.5)]{TG25}.  Note that for every CM point $\tau_0\in\mathcal{H}$, there must exist three unique integers $a(\tau_0),b(\tau_0),c(\tau_0)$ with $a>0$ and $\gcd(a,b,c)=1$ such that $a\tau_0^2+b\tau_0+c=0$. Thus, we can denote $\tau_0$ as $[a,b,c]=-\frac{b}{2a}+i\frac{\sqrt{4ac-b^2}}{2a}$. Based on Lemma \ref{L:tPtQ}, we can use the following algorithm, which is slightly adjusted from those in \cite{TGW24} and \cite{TG25}, to classify CM elliptic curves in the families $\mathcal{F}_t$ and $\mathcal{C}_t$ in accordance with the degree of the base field $\Q(t)$. To be precise, while the algorithms in \cite{TGW24} and \cite{TG25} rely on certain upper bounds of class numbers, we remove such a condition completely in order to obtain the desired CM points in the fundamental domains of $\Gamma_0(4)$ and $\Gamma_0(3)$. The completeness of the list of such CM points is guaranteed by Proposition~\ref{P:CMPQ}. In addition, our proposed algorithm directly computes values of the Hauptmoduls $t_P$ and $t_Q$ at CM points using modular equations, whereas the previous two works did not incorporate the computation of these Hauptmoduls into their algorithms.

\begin{algorithm}\label{A:CM}
	Let $n\in \NN$ and let $D\equiv 0$ or $1\ (\mathrm{mod}\ 4)$ be a negative integer with $h(D)\le n$, where $h(D)$ denotes the number of inequivalent primitive binary quadratic forms of discriminant $D$. Perform the following operations: 
	\begin{itemize}
		\item [(i)] Solve the system
		\[\begin{cases}
			a,b,c\in\Z,a>0,\\
			\gcd(a,b,c)=1,\\
			b^2-4ac=D, \\
			|b|\le a\le c
		\end{cases}\]
		to determine a full list of CM points $[a,b,c]$ in the fundamental domain $F=\{\tau\in\mathcal{H} : -1/2\le \re(\tau)\le 1/2,|\tau|\ge 1\}$ of $\SL_2(\Z)$ with discriminant $D$.
		\item [(ii)] Use the right coset representatives of $\Gamma_0(4)$ (resp. $\Gamma_0(3)$) in $\SL_2(\Z)$ to convert the points obtained in \emph{(i)} to determine a full list of CM points in $F_P$ (resp. $F_Q$) with discriminant $D$.
		\item [(iii)] For each CM point $\tau_0$ in $F_P$ (resp. $F_Q$) with discriminant $D$ obtained in \emph{(ii)}. Calculate the exact value of $j(\tau_0)$ by using CM theory and then calculate the exact value of $t_P(\tau_0)$ (resp. $t_Q(\tau_0)$) by using Lemma \ref{L:tPtQ}. If $t_P(\tau_0)\neq 0,16$ (resp. $t_Q(\tau_0)\neq 0, 27$) and the degree of $t_P(\tau_0)$ (resp. $t_Q(\tau_0)$) does not exceed $n$, then output $\tau_0$.
	\end{itemize}
\end{algorithm}
\begin{remark}\label{R:algo}
In Algorithm~\ref{A:CM} (iii), by ``using CM theory'' we refer to the fact that the degree of the algebraic integer $j(\tau_0)$ coincides with $h(D)$. We first compute $j(\tau_0)$ numerically and use \texttt{RootApproximant} in \textsc{Mathematica} to identify its minimal polynomial with prescribed degree. Since the minimal polynomial has integer coefficients, its correctness can be easily verified using numerical values of the Galois conjugates of $j(\tau_0)$.
\end{remark}

\begin{proposition}\label{P:CMPQ}
	Let $n\in\NN$. If we apply Algorithm \ref{A:CM} to \emph{all} negative integer $D\equiv 0$ or $1\ (\mathrm{mod}\ 4)$ with $h(D)\le n$ and denote the set of the output CM points as $\mathsf{CM}_{P,n}$ (resp. $\mathsf{CM}_{Q,n}$), then we have
	\[\mathsf{T}_{P,n}:=\{t\in\CC : \mathcal{F}_t\text{ has CM with }[\Q(t):\Q]\le n\}=\{t_P(\tau_0) : \tau_0\in\mathsf{CM}_{P,n}\}.\]

	\[\left(\text{resp. }\mathsf{T}_{Q,n}:=\{t\in\CC : \mathcal{C}_t\text{ has CM with }[\Q(t):\Q]\le n\}=\{t_Q(\tau_0):\tau_0\in\mathsf{CM}_{Q,n}\}.\right)\]
	In other words, we can use Algorithm \ref{A:CM} to find \emph{all} CM elliptic curves in the family $\mathcal{F}_t$ (resp. $\mathcal{C}_t$) defined over number fields with degree not exceeding $n$.
\end{proposition}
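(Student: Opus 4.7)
The plan is to prove the statement by checking two directions: every $t\in \mathsf{T}_{P,n}$ (resp. $\mathsf{T}_{Q,n}$) must appear in the output, and conversely every output must lie in $\mathsf{T}_{P,n}$ (resp. $\mathsf{T}_{Q,n}$). The soundness direction is largely built into the algorithm itself, because Step (iii) explicitly discards any CM point whose associated $t$-value has degree exceeding $n$ or equals an excluded singular value $0,16$ (resp. $0,27$). The real content is the completeness direction, which I would break into a reduction to CM points in $F_P$ (resp. $F_Q$), a bound on the relevant discriminants, and a verification that Steps (i) and (ii) enumerate all such CM points exhaustively.

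First I would use Lemma \ref{L:tP}(i) to observe that $\mathcal{F}_t$ has complex multiplication if and only if the lattice $\Z+\Z(4\tau')$ does, i.e. iff $\tau'$ is a CM point of $\mathcal{H}$; Lemma \ref{L:tP}(ii) then shows that every such value of $t$ equals $t_P(\tau')$ for some CM $\tau'\in\mathcal{H}$, which we may take to lie in $F_P$ since $t_P$ is a Hauptmodul for $\Gamma_0(4)$. Conversely, whenever $\tau_0\in F_P$ is a CM point and $t=t_P(\tau_0)\neq 0,16$, the curve $\mathcal{F}_t$ is CM because its period lattice $\Z+\Z(4\tau_0)$ has endomorphism ring strictly larger than $\Z$. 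The analogous reduction for $\mathcal{C}_t$ follows from Lemma \ref{L:tQ}. Thus it suffices to enumerate CM points in $F_P$ (resp. $F_Q$) and to decide when their $t$-image has degree at most $n$ over $\Q$.

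The key observation that limits the search to discriminants with $h(D)\leqslant n$ is the containment $\Q(j(\tau_0))\subseteq\Q(t_P(\tau_0))$ (and the analogous inclusion for $t_Q$) supplied by Lemma \ref{L:tPtQ}, combined with the classical CM-theoretic identity $[\Q(j(\tau_0)):\Q]=h(D)$ for any CM point $\tau_0$ of discriminant $D$. Together these give $h(D)\leqslant[\Q(t_P(\tau_0)):\Q]$, so any $t\in \mathsf{T}_{P,n}$ (resp. $\mathsf{T}_{Q,n}$) must arise from a discriminant $D$ with $h(D)\leqslant n$; the set of such $D$ is finite by Heilbronn's theorem, together with the explicit class-number classifications due to Heegner, Stark, and Watkins. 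Step (i) then outputs the reduced forms $[a,b,c]$ indexing all CM points of discriminant $D$ in the standard $\SL_2(\Z)$ fundamental domain $F$, and Step (ii) translates these into $F_P$ (resp. $F_Q$) via right coset representatives of $\Gamma_0(4)$ (resp. $\Gamma_0(3)$): because $F_P$ can be realized as a union $\bigcup_i \gamma_i^{-1}(F)$ and the CM discriminant is $\SL_2(\Z)$-invariant, every CM point of discriminant $D$ in $F_P$ arises this way. Finally, Step (iii) computes $j(\tau_0)$ exactly via the Hilbert class polynomial and inverts the relation of Lemma \ref{L:tPtQ} to recover $t_P(\tau_0)$ or $t_Q(\tau_0)$.

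The main obstacle I anticipate is the bookkeeping in Step (ii): one must verify that no CM point in $F_P$ (resp. $F_Q$) is missed or double-counted across boundary identifications of the fundamental domains after the coset translations. A secondary subtlety lies in Step (iii), where inverting the degree-$6$ (resp. degree-$4$) polynomial relation in Lemma \ref{L:tPtQ} produces several candidate values for $t_P(\tau_0)$ (resp. $t_Q(\tau_0)$), so one must isolate the correct root, e.g. by evaluating the relevant modular function numerically at $\tau_0$ to sufficient precision via its $q$-expansion and matching it against the roots of the resolvent polynomial over $\Q(j(\tau_0))$.
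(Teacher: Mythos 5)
Your proposal is correct and follows essentially the same route as the paper: both directions are handled via Lemma \ref{L:tP} (resp.\ Lemma \ref{L:tQ}) together with the Hauptmodul property to move between $\tau'$ and a CM point in $F_P$ (resp.\ $F_Q$), and the discriminant bound comes from the inclusion $\QQ(j(\tau_0))\subseteq\QQ(t_P(\tau_0))$ of Lemma \ref{L:tPtQ} combined with $[\QQ(j(\tau_0)):\QQ]=h(D)$. The extra remarks on finiteness of the set of discriminants and on root isolation in Step (iii) are sensible implementation notes but do not change the mathematical content.
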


\begin{proof}
	Let $\mathcal{F}_t$ (resp. $\mathcal{C}_t$) be a CM elliptic curve with $[\Q(t):\Q]\le n$. Then we have $t\neq 0,16$ (resp. $t\neq 0, 27$) since otherwise $\mathcal{F}_t$ (resp. $\mathcal{C}_t$) will not be an elliptic curve. By Lemma \ref{L:tP} (resp. Lemma \ref{L:tQ}), there exists a CM point $\tau'\in\mathcal{H}$ such that $t=t_P(\tau')$ (resp. $t=t_Q(\tau')$). Since $t_P$ (resp. $t_Q$) is the Hauptmodul for $\Gamma_0(4)$ (resp. $\Gamma_0(3)$), we know that there also exists a CM point $\tau_0$ in the fundamental domain $F_P$ (resp. $F_Q$) such that $t=t_P(\tau_0)$ (resp. $t_Q(\tau_0)$). Let $D$ be the discriminant of $\tau_0$. By Lemma \ref{L:tPtQ}, we have $j(\tau_0)\in\Q(t_P(\tau_0))$ (resp. $\Q(t_Q(\tau_0))$), implying
	\[h(D)=[\Q(j(\tau_0)):\Q]\le[\Q(t_P(\tau_0)):\Q]\le n\]

	\[\left(\text{resp. }h(D)=[\Q(j(\tau_0)):\Q]\le[\Q(t_Q(\tau_0)):\Q]\le n\right).\]
	Hence we have $\tau_0\in\mathsf{CM}_{P,n}$ (resp. $\mathsf{CM}_{Q,n}$).

	Conversely, let $\tau_0\in\mathsf{CM}_{P,n}$ (resp. $\mathsf{CM}_{Q,n}$). According to the last step of Algorithm \ref{A:CM}, we know that $t=t_P(\tau_0)$ (resp. $t_Q(\tau_0)$) is an algebraic number with degree $\le n$ and $\mathcal{F}_t$ (resp. $\mathcal{C}_t$) is non-singular. Therefore, it remains to show that $\mathcal{F}_t$ (resp. $\mathcal{C}_t$) has CM. By Lemma~\ref{L:tP} (resp. Lemma~\ref{L:tQ}), there exists $\tau'\in\mathcal{H}$ such that
\begin{equation*}
		\mathcal{F}_t\cong \CC/ (\Z+\Z(4\tau'))\quad\text{ and }\quad t=t_P(\tau')
	\end{equation*}

	\begin{equation*}
		(\text{resp. }\mathcal{C}_t\cong \CC/(\Z+\Z\tau')\quad\text{ and }\quad t=t_Q(\tau')).
	\end{equation*}
Moreover, since $t_P$ (resp. $t_Q$) is a Hauptmodul, $\tau_0$ and $\tau'$ must be in the same $\Gamma_0(4)$ (resp. $\Gamma_0(3)$) orbit. In particular, $\tau'$ is also a CM point and $\mathcal{F}_t$ (resp. $\mathcal{C}_t$) has CM, as desired.
\end{proof}

We implement Algorithm \ref{A:CM} in \textsc{Mathematica} and apply it to all negative discriminants $D$ with $h(D)\le 4$.  Comprehensive lists of $D$ for which $h(D)=1,2$ can be found in \cite[Proposition~3.1]{TGW24} and the remaining cases can be done using the same arguments. Indeed, the negative fundamental discriminants $D$ satisfying $h(D)=3$ (resp. $h(D)=4$) are listed in \cite{OEIS_A006203} (resp. \cite{OEIS_A013658}). There has been extensive work on computing class numbers of imaginary quadratic fields, which underlies such lists (see, e.g., \cite[\S 5.3]{Cohen93}). Then one can apply  \cite[Corollary~7.28]{Cox13} to determine the remaining (non-fundamental) values of $D$ with the prescribed class number. It turns out that $\mathsf{T}_{P,4}$ consists of
\begin{itemize}
	\item $3$ rational integers $t$;
	\item $12$ pairs of real quadratic integers $t,t^\sigma$ with $\Gal(\Q(t)/\Q)=\langle\sigma \rangle$;
	\item $4$ pairs of imaginary quadratic integers $t,\bar{t}$;
	\item $24$ Galois orbits of totally real quartic integers $t_1,t_2,t_3,t_4$;
	\item $24$ conjugate orbits of non-totally real quartic integers $t_1,t_2,t_3,t_4$,
\end{itemize}
and $\mathsf{T}_{Q,4}$ consists of
\begin{itemize}
    \item $3$ rational integers $t$;
	\item $15$ pairs of real quadratic integers $t,t^\sigma$ with $\Gal(\Q(t)/\Q)=\langle\sigma \rangle$;
	\item $2$ pairs of imaginary quadratic integers $t,\bar{t}$;
	\item $4$ conjugate orbits of non-totally real cubic integers $t_1,t_2,t_3$;
	\item $24$ Galois orbits of totally real quartic integers $t_1,t_2,t_3,t_4$;
	\item $25$ conjugate orbits of non-totally real quartic integers $t_1,t_2,t_3,t_4$.
\end{itemize}
The rational integers for both families are exactly the ones mentioned in Section \ref{S:Intro}. The real quadratic integers for both families are collected in Table~\ref{2x2identities_table}. It is also worth mentioning that all of the totally real quartic integers are of biquadratic type, as displayed in Table~\ref{quarticGalorbs_table}. All the data above are collected in a separate file, which can be accessed at \cite{ST24}.

\section{$L$-functions of CM elliptic curves}\label{S:Lfunc}
It is a well-known fact due to a classical result of Deuring that the $L$-series of a CM elliptic curve defined over an arbitrary number field is expressible in terms of the Hecke $L$-series attached to Gr\"{o}ssencharacters, so that it can be analytically continued to the entire complex plane. More precisely, we have the following result.
\begin{proposition}
Let $E$ be a CM elliptic curve defined over a number field $K$ and define the complete $L$-function of $E/K$ as
\[\Lambda(E/K,s):=(\Norm(\mathcal{N}_K(E))d_K^2)^{s/2}((2\pi)^{-s}\Gamma(s))^{[K:\Q]}L(E/K,s),\]
where $\mathcal{N}_K(E)$ denotes the conductor of $E/K$ and $d_K$ denotes the discriminant of $K$. Then $\Lambda(E/K,s)$ admits an analytic continuation to the entire complex plane and satisfies the functional equation
\begin{equation}\label{E:FE}
\Lambda(E/K,s)=w_{E/K}\Lambda(E/K,2-s),
\end{equation}
where $w_{E/K}\in \{\pm 1\}$ is the root number of $E/K$.
\end{proposition}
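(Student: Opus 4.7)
The plan is to invoke Deuring's classical theorem, which is already alluded to in the paragraph preceding the proposition, and reduce the statement to the analytic theory of Hecke $L$-series attached to Gr\"ossencharacters. Let $F$ denote the CM field of $E$. I would treat the two cases $F\subset K$ and $F\not\subset K$ separately. In the first case, Deuring's theorem provides a Gr\"ossencharacter $\psi$ of $K$ of infinity type $(1,0)$ (at every archimedean place) with conductor $\mathfrak{f}_\psi$ satisfying $\Norm_{K/\Q}(\mathfrak{f}_\psi)=\Norm(\mathcal{N}_K(E))$, and one has the factorization
\[
L(E/K,s)=L(s,\psi)L(s,\bar\psi).
\]
In the second case, set $K'=KF$; then Deuring furnishes a Gr\"ossencharacter $\psi$ of $K'$ such that $L(E/K,s)=L(s,\psi)$, and the conductor/discriminant data of $\psi$ over $K'$ matches that of $E/K$ by the conductor–discriminant formula applied to the quadratic extension $K'/K$.

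Next I would appeal to Hecke's theorem: for any Hecke Gr\"ossencharacter $\chi$ of a number field $L$, the completed $L$-function
\[
\Lambda(s,\chi)=(|d_L|\Norm(\mathfrak{f}_\chi))^{s/2}\,\Gamma_\infty(s,\chi)\,L(s,\chi)
\]
admits meromorphic continuation to $\CC$ (entire if $\chi$ is nontrivial) and satisfies a functional equation $\Lambda(s,\chi)=W(\chi)\Lambda(k+1-s,\bar\chi)$, where $k$ is the weight associated to the infinity type and $W(\chi)$ is a complex number of modulus $1$. For our $\psi$ of infinity type $(1,0)$, the archimedean gamma factor at each infinite place works out to $(2\pi)^{-s}\Gamma(s)$, which accounts for $[K:\Q]$ such factors in the first case and $[K':\Q]=2[K:\Q]$ such factors (half coming from $\psi$ and half from $\bar\psi$ once combined) in the second, matching the stated definition of $\Lambda(E/K,s)$.

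The bulk of the work then consists of reconciling the numerical invariants, and this is the step I expect to be the only real obstacle. Specifically, one must check that
\[
|d_K|^2\Norm(\mathcal{N}_K(E))=|d_L|\Norm(\mathfrak{f}_\psi)\quad\text{or}\quad|d_K|^2\Norm(\mathcal{N}_K(E))=|d_L|\Norm(\mathfrak{f}_\psi)|d_L|\Norm(\mathfrak{f}_{\bar\psi}),
\]
as appropriate. In the case $F\subset K$, this is immediate since $\bar\psi$ has the same conductor as $\psi$ and $L=K$, so $|d_L|^2\Norm(\mathfrak{f}_\psi\mathfrak{f}_{\bar\psi})=|d_K|^2\Norm(\mathcal{N}_K(E))$. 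In the case $F\not\subset K$, one uses the conductor–discriminant formula
\[
|d_{K'}|=|d_K|^2\Norm_{K/\Q}(\mathfrak{d}_{K'/K}),\qquad \mathcal{N}_K(E)=\Norm_{K'/K}(\mathfrak{f}_\psi)\cdot\mathfrak{d}_{K'/K},
\]
where the second identity is Deuring's description of the conductor of the base-change. Combining these yields $|d_{K'}|\Norm(\mathfrak{f}_\psi)=|d_K|^2\Norm(\mathcal{N}_K(E))$, as required.

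Once the invariants are matched, analytic continuation and the functional equation for $\Lambda(E/K,s)$ follow directly from the corresponding statements for $\Lambda(s,\psi)$ (and $\Lambda(s,\bar\psi)$), with the root number $w_{E/K}$ equal to $W(\psi)W(\bar\psi)$ or $W(\psi)$, respectively. The fact that $w_{E/K}\in\{\pm 1\}$ comes from the symmetry $\psi\leftrightarrow\bar\psi$ (so that $W(\psi)W(\bar\psi)=|W(\psi)|^2=1$ and is real) or, in the second case, from the self-duality $L(s,\psi)=L(s,\bar\psi)$ induced by the nontrivial automorphism of $K'/K$, which forces $W(\psi)$ to be real of absolute value $1$.
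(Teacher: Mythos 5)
The paper states this proposition without proof, attributing it to Deuring's classical theorem, and your Deuring-plus-Hecke argument is precisely the standard route it has in mind: the factorization of $L(E/K,s)$ into Hecke $L$-series of Gr\"ossencharacters, the conductor--discriminant bookkeeping via $d_{K'}=d_K^2\Norm_{K/\Q}(\mathfrak{d}_{K'/K})$ and $\mathcal{N}_K(E)=\mathfrak{d}_{K'/K}\Norm_{K'/K}(\mathfrak{f}_\psi)$, and the self-duality argument forcing $w_{E/K}\in\{\pm 1\}$ are all sound. Two small slips to fix: in the case $F\subset K$ the conductor relation is $\mathcal{N}_K(E)=\mathfrak{f}_\psi\mathfrak{f}_{\bar\psi}$, so $\Norm(\mathfrak{f}_\psi)^2=\Norm(\mathcal{N}_K(E))$ rather than $\Norm(\mathfrak{f}_\psi)=\Norm(\mathcal{N}_K(E))$ as first written (you use the correct version in the final reconciliation), and in the case $F\not\subset K$ the field $K'=KF$ is totally imaginary of degree $2[K:\Q]$ and hence has only $[K:\Q]$ complex places, so $\Lambda(s,\psi)$ carries $[K:\Q]$ factors of $(2\pi)^{-s}\Gamma(s)$ (not $2[K:\Q]$), which is exactly what matches the stated definition of $\Lambda(E/K,s)$.
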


 We shall examine structures of $L$-functions of CM elliptic curves defined over a quadratic or quartic number field with certain properties, including those appearing in Section \ref{S:results}. The following proposition will be crucial when linking products of $L$-values of cusp forms to $L$-values of elliptic curves in the proofs of our main results. Recall that a newform $f=\sum a_nq^n \in \mathcal{S}_k(\Gamma_1(N))$ is said to have \textit{complex multiplication} (CM) by a Dirichlet character $\phi$ of conductor $M$ associated to an imaginary quadratic field if $a_p=\phi(p)a_p$ for all prime $p\nmid NM$. 

For an elliptic curve $E$ defined over a number field $K$ and a subfield $L$ of $K$, we let $\Res_{K/L}(E)$ denote the Weil restriction of $E$, which is an abelian variety of dimension $[K:L]$ over $L$. Also, we denote the conductor of an abelian variety $A$ over $L$ by $\mathcal{N}_L(A)$. 

\begin{proposition}\label{P:Lsplit}
Let $K$ be a totally real number field and let $E$ be a CM elliptic curve over $K$ which is $K$-isogenous to all of its Galois conjugates.
\begin{itemize}
\item [(i)] Suppose $K$ is a quadratic field and $A=\Res_{K/\Q}(E)$. If $A$ is simple over $\Q$, then $L(E,s)=L(f,s)L(f^\sigma,s)$ for some CM newform $f\in \mathcal{S}_2(\Gamma_1(N);M)$, where $M=\End_\QQ(A)\otimes \QQ$ is a quadratic number field with $\Gal(M/\Q)=\langle \sigma \rangle$, and $\mathcal{N}_\QQ(A)=N^2$.  Otherwise, $L(E,s)=L(f_1,s)L(f_2,s)$ for some CM newforms $f_i\in \mathcal{S}_2(\Gamma_0(N_i);\Q)$ and $\mathcal{N}_\QQ(A)=N_1N_2$.
\item [(ii)] Suppose $K$ is a biquadratic field and $A=\Res_{K/\Q}(E)$. Then there exist CM newforms $f_i\in \mathcal{S}_2(\Gamma_1(N_i))$, $1\le i\le 4$, such that 
$L(E,s)=L(f_1,s)L(f_2,s)L(f_3,s)L(f_4,s)$ and $\mathcal{N}_\QQ(A)=\prod_{i=1}^4 N_i$.
\end{itemize}
\end{proposition}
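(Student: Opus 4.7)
My plan is to work throughout with the Weil restriction $A=\Res_{K/\Q}E$, an abelian variety of dimension $n=[K:\Q]$ over $\Q$ satisfying $L(A/\Q,s)=L(E/K,s)$. After base change one has
\[A_K\cong\prod_{\sigma\in\Gal(K/\Q)}E^\sigma,\]
and $\End_\Q(A)\otimes\Q$ is the subalgebra of $\End_K(A_K)\otimes\Q$ fixed by the Galois action permuting the factors. The strategy will be to exploit the hypothesis that $E$ is $K$-isogenous to each $E^\sigma$ to exhibit a commutative semisimple subalgebra of $\Q$-dimension $n$ inside $\End_\Q(A)\otimes\Q$, deduce that $A$ is $\Q$-isogenous to a product of abelian varieties of $\GL_2$-type, and then read off the newform factorisation of $L(A/\Q,s)$ component by component.

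For part (i), I would fix a $K$-isogeny $\phi\colon E\to E^\sigma$, where $\Gal(K/\Q)=\langle\sigma\rangle$, and combine it with the transposition of the two factors of $A_K=E\times E^\sigma$ to produce a $\Q$-rational endomorphism of $A$ not lying in $\Z$. Hence $\End_\Q(A)\otimes\Q$ contains a quadratic field and $A$ is of $\GL_2$-type. If $A$ is $\Q$-simple, then $M:=\End_\Q(A)\otimes\Q$ is a quadratic field, and by the theory of $\GL_2$-type abelian surfaces (unconditional in the CM setting via the Deuring--Shimura correspondence) there is a CM newform $f\in\mathcal{S}_2(\Gamma_1(N);M)$ satisfying
\[L(A/\Q,s)=\prod_{\tau\colon M\hookrightarrow\CC}L(f^\tau,s)=L(f,s)L(f^\sigma,s).\]
If $A$ is not $\Q$-simple, the $\GL_2$-type structure forces $A\sim_\Q E_1\times E_2$ for CM elliptic curves $E_i/\Q$, each corresponding to a rational CM newform $f_i\in\mathcal{S}_2(\Gamma_0(N_i);\Q)$, yielding $L(A/\Q,s)=L(f_1,s)L(f_2,s)$.

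For part (ii), I would apply the same construction to each of the three nontrivial $\sigma\in\Gal(K/\Q)\cong(\Z/2\Z)^2$ to produce three independent nontrivial $\Q$-rational endomorphisms of the fourfold $A$, generating a commutative semisimple subalgebra of rank $4$ in $\End_\Q(A)\otimes\Q$. It then follows that $A$ splits $\Q$-isogenously into $\GL_2$-type pieces whose dimensions sum to $4$, and each simple factor of dimension $d$ is attached to a CM newform with coefficient field of degree $d$. Enumerating the $L$-series of the Galois conjugates of these newforms produces exactly four factors, giving $L(A/\Q,s)=\prod_{i=1}^4L(f_i,s)$ for CM newforms $f_i\in\mathcal{S}_2(\Gamma_1(N_i))$. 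The conductor identities will follow from combining the Milne--Serre formula for the conductor of a Weil restriction with the relation $\mathcal{N}_\Q(A_f)=N^{\dim A_f}$ for the $\GL_2$-type abelian variety attached to a newform of level $N$.

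The main obstacle I anticipate is pinning down the precise shape of the $\GL_2$-type decomposition of $A$ in part (ii), in particular ruling out a simple factor attached to a newform with quartic coefficient field whose Galois orbit would not respect the biquadratic structure of $K$. To handle this I would track the CM Hecke characters explicitly: under the Deuring correspondence, $L(E/K,s)$ equals the Hecke $L$-series of a Gr\"{o}ssencharacter of the compositum $KF$, where $F$ is the CM field of $E$, and inducing to $\Q$ decomposes this as a sum over at most four characters compatible with the biquadratic Galois structure, from which modularity and the stated factorisation can be extracted directly.
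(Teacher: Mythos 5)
Your part (i) is essentially the paper's argument: the $K$-isogeny $E\to E^\sigma$, together with the fact that $\End_K^0(E)=\Q$ (because $K$ is totally real, the CM is not defined over $K$), makes $\End_\Q^0(A)\cong\bigoplus_{\tau}\Hom_K^0(E^\tau,E)$ two-dimensional, hence a quadratic field when $A$ is simple; your appeal to classical CM/Shimura theory for modularity in place of the paper's use of Serre's conjecture is a harmless, arguably cleaner, variation, and the non-simple case and the conductor identities match the paper.

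The genuine gap is in part (ii), and it sits exactly where the paper works hardest. When $A=\Res_{K/\Q}E$ is $\Q$-simple, $\End_\Q^0(A)$ is a four-dimensional division algebra of the form of a twisted group algebra $\Q^c[G]$ with $G\cong C_2\times C_2$ and $u_gu_h=c(g,h)u_{gh}$; your three endomorphisms span this whole algebra, and your assertion that they ``generate a commutative semisimple subalgebra of rank $4$'' is precisely the claim that the $2$-cocycle $c$ is symmetric. That is not automatic: a non-symmetric cocycle on $C_2\times C_2$ over $\Q$ yields a quaternion division algebra ($i^2=a$, $j^2=b$, $ij=-ji$), which contains no quartic subfield, so $A$ would not be of $\GL_2$-type and no four-newform factorization would follow from your route. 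The paper disposes of this by first passing to a quadratic subfield $L\subset K$, showing $\mathcal{D}=\End_L^0(\Res_{K/L}E)$ is a quadratic field, and then invoking the Guitart--Quer isomorphism $\End_\Q^0(A)\cong\mathcal{D}^c[\Gal(L/\Q)]$, where the group is now cyclic of order two and commutativity is automatic. Your Hecke-character fallback meets the same obstruction in different clothing: splitting $L(E/K,s)$ into four CM $L$-series by writing the induction of $\psi$ from $KF$ to $F$ as a sum of characters requires the $\Gal(KF/F)$-invariant character $\psi$ to extend to $F$, and for the non-cyclic group $C_2\times C_2$ this extension problem carries a genuine cohomological obstruction that must be addressed. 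Finally, your single sentence on the non-simple cases hides real work: when $E$ descends only to a quadratic subfield $L$, the paper must first prove that the descended curve and its quadratic twist are each $L$-isogenous to their own Galois conjugates before part (i) can be applied to them.
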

\begin{proof} 
To prove (i), we follow the arguments in the proof of \cite[Lemma 4.2]{GJLQ24}. First, let $K=\QQ(\sqrt{d})$ be a quadratic field with $E'$ being the Galois conjugate of $E$ and assume that $A=\Res_{K/\QQ}(E)$ is simple over $\QQ$. Then it follows that $\End_\QQ^0(A):=\End_\QQ(A)\otimes \QQ$ is a division algebra.
By the universal property of the restriction of scalars, we have the following $\QQ$-vector space isomorphism
\[\End_\QQ^0(A)\cong \prod_{\tau\in \Gal(K/\QQ)}\Hom_K^0(E^\tau,E),\]
where $\Hom_K^0(A,B)=\Hom_K(A,B)\otimes \QQ.$ Since $K$ is totally real and $E$ has CM, we have $\End_K(E)\cong \Z$. Moreover, since $E$ is $K$-isogenous to $E'$, we have that $\Hom_K(E,E')$ is a free $\ZZ$-module of the same rank as $\End_K(E)$, implying that $\End_\QQ^0(A)$ has dimension $2$ over $\QQ$. Since every $2$-dimensional division algebra is commutative, $\End_\QQ^0(A)$ must be a quadratic number field. In other words, the abelian surface $A$ is of $\GL_2$-type, as defined in \cite{Ribet04}. Then by the Serre's modularity conjecture, which has now been proven, thanks to work of Ribet \cite{Ribet04} and Khare and Wintenberger \cite{KW09-I,KW09-II}, we have that $A$ is isogenous over $\QQ$ to the modular abelian surface $A_f$ attached to some newform $f\in \mathcal{S}_2(\Gamma_1(N))$ via Shimura's construction. We have in addition that $f$ has CM and its coefficient field $M$ is isomorphic to $\End_\QQ(A_f)\otimes \QQ$, which is a quadratic number field in this case (see, e.g., \cite{Gonzalez11}). Hence by the defining property of the restriction of scalars it follows that
\[L(E/K,s)=L(A/\QQ,s)=L(A_f,s)=L(f,s)L(f^\sigma,s),\]
where $\sigma$ is the generator of the Galois group $\Gal(M/\QQ).$  Moreover, by Carayol's theorem \cite{Carayol89}, we have $\mathcal{N}_\QQ(A)=\mathcal{N}_\QQ(A_f)=N^2$. On the other hand, if $A$ is not simple, then $A \sim_{\QQ} E_1 \times E_2$ for some elliptic curves $E_1$ and $E_2$ over $\QQ$, implying 
$$
L(E / K, s)=L(A / \QQ, s)=L\left(E_1, s\right) L\left(E_2, s\right).
$$
 It then follows immediately from the modularity theorem that the right-hand side is the product of $L$-functions of some CM newforms $f_i\in \mathcal{S}_2(\Gamma_0(N_i);\QQ)$, $i=1,2$, corresponding to $E_1$ and $E_2$, and $\mathcal{N}_\QQ(A)=\mathcal{N}_\QQ(E_1)\mathcal{N}_\QQ(E_2)=N_1N_2.$

The statement (ii) can be proven in a similar manner. First, assume that there exist a proper subfield $L$ of $K$ for which $E$ is isogenous to a base change of an elliptic curve $F$ over $L$. Hence $\Res_{K/L}(E)$ is not simple (over $L$).  If $L=\QQ$, then we have 
\[L(E/K,s)=\prod_{\chi\in \widehat{G}}L(F\otimes \chi,s),\]
where $\widehat{G}$ is the group of characters of $G=\Gal(K/\QQ).$ Since $G\cong C_2\times C_2$, it follows that any nontrivial character $\chi\in \widehat{G}$ is a quadratic character. Therefore, by the modularity theorem, there exist CM newforms $f_i\in \mathcal{S}_2(\Gamma_0(N_i);\QQ)$, $1\le i\le 4$, for which 
\[\prod_{\chi\in \hat{G}}L(F\otimes \chi,s)=\prod_{i=1}^4 L(f_i,s).\]
Moreover, we have $A\sim \prod_{\chi\in \widehat{G}}F\otimes \chi$, so \[\mathcal{N}_\QQ(A)=\prod_{\chi\in \widehat{G}}\mathcal{N}_\QQ(F\otimes \chi)=\prod_{i=1}^4 N_i.\] 
Now suppose that $L$ is a quadratic field and let $B=\Res_{K/L}(E)$, which is a CM abelian surface over $L$. Then we have that $F$ is CM elliptic curve over $L$ and 
\begin{equation}\label{E:Lprod}
L(E/K,s)=L(B/L,s)=L(F,s)L(F\otimes\chi,s),
\end{equation}
where $\chi$ is a quadratic character on $\Gal(K/L)$. We claim that both $F$ and $F\otimes\chi$ are $L$-isogenous to their Galois conjugates. To see why this claim is true, let $B_1:=\Res_{L/\QQ}(F)$, whose base change to $L$ is isogenous to $F\times F^\sigma$, where $\Gal(L/\QQ)=\langle\sigma\rangle.$ Since $L$ is totally real, we have that $\End_\QQ^0(B_1)=\End_L^0(B_1)\cong \End_L^0(F)\times \End_L^0(F^\sigma)$, which is a $2$-dimensional $\QQ$-algebra. On the other hand, since $\End_\QQ^0(B_1)\cong \End_L^0(F)\times \Hom_L^0(F,F^\sigma)$ and $\End_L^0(F)\cong \QQ$, it follows that $\Hom_L^0(F,F^\sigma)\ne 0$; i.e., $F$ is $L$-isogenous to $F^\sigma$. The same arguments can then be applied to $B_2=\Res_{L/\QQ}(F\otimes \chi)$. Hence we have from (i) that there exist CM newforms $f_i\in \mathcal{S}_2(\Gamma_1(N_i))$ and $g_i\in \mathcal{S}_2(\Gamma_1(M_i)),$ where $i=1,2$, for which $L(F,s)=L(f_1,s)L(f_2,s)$ and $L(F\otimes \chi,s)=L(g_1,s)L(g_2,s)$. The desired result then follows immediately from \eqref{E:Lprod}. Moreover, we have 
\begin{equation*}
A=\Res_{L/\QQ}(B)=B_1\times B_2,
\end{equation*}
so $\mathcal{N}_\QQ(A)=\mathcal{N}_\QQ(B_1)\mathcal{N}_\QQ(B_2)=N_1N_2M_1M_2.$ 

Next, let us assume that $\Res_{K/L}(E)$ is simple for any subfield $L$ of $K$. For a fixed quadratic subfield $L$ of $K$, let $B=\Res_{K/L}(E)$. By the $\ZZ$-module isomorphism 
\[\End_L(B)\cong \prod_{\tau\in \Gal(K/L)}\Hom_K(E^\tau,E),\]
we again have that $\mathcal{D}:=\End_L^0(B)$ is a division algebra of dimension $2$ over $\QQ$, so it must be a field. Note that $\End_\QQ^0A$ admits a $\mathcal{D}$-algebra structure via the following isomorphism \cite[Proposition~4.5]{GQ14}
\begin{equation}\label{E:twist}
\End_\QQ^0(A)=\End_\QQ^0(\Res_{L/\QQ}(B))\cong \mathcal{D}^c[G],
\end{equation}
where the right-hand side denotes the twisted group algebra of $G=\Gal(L/\QQ)$ by a $2$-cocycle $c$ on $G$ with values in $\mathcal{D}^*$. We also have that $\mathcal{D}^c[G]$ has dimension $|G|=2$ over $\mathcal{D}$, so it is commutative. On the other hand, since $A=\Res_{K/\QQ}(E)$ is simple, we can apply the arguments in the proof of (i) to show that $M=\End_\QQ^0(A)$ is a $4$-dimensional division algebra over $\QQ$. This fact together with the commutative property obtained from the isomorphism \eqref{E:twist} implies that $M$ is a quartic number field. Therefore, we can again conclude using Serre's modularity conjecture that $A\sim A_g$ for some CM newform $g\in \mathcal{S}_2(\Gamma_1(N))$, implying
\[L(E,s)=L(A,s)=L(A_g,s)=\prod_{\sigma\in \Gal(M/\QQ)}L(g^\sigma,s),\]
and $\mathcal{N}_\QQ(A)=\mathcal{N}_\QQ(A_g)=N^4.$
\end{proof}

\section{Main results}\label{S:results}

In \cite[Proposition~2.2]{HY22}, He and Ye prove a general formula for Mahler measures of a certain family of three-variable polynomials defining $K3$ surfaces, which is parametrized by a modular function evaluated at CM points, in terms of special values of Dirichlet and modular $L$-functions. Motivated by this result, we start by proving analogous formulas for the families $\mathcal{F}_t$ and $\mathcal{C}_t$.

\begin{proposition}\label{P:MMasLcusp}
	Let $\tau_0=[a,b,c]=-\frac{b}{2a}+i\frac{\sqrt{4ac-b^2}}{2a}\in\mathcal{H}$ be a CM point.
	\begin{itemize}
		\item [(i)] For $\tau_0\in F_P$, let $l=\gcd(a,4b,16c)$ and let $N$ be the smallest positive integer such that the matrix $N\begin{pmatrix}\frac{32c}{l} & \frac{16b}{l}\\ \frac{16b}{l} & \frac{32a}{l}\end{pmatrix}^{-1}$ is \emph{even}; i.e., of the form $\begin{pmatrix}2\ZZ & \ZZ \\ \ZZ & 2\ZZ\end{pmatrix}$. Then
		\[\Theta_{P,\tau_0}(\tau):=\sum_{m,n\in\Z}\chi_{-4}(n)(2bm+an)q^{\frac{16cm^2+4bmn+an^2}{l}}\quad \text{with } q=e^{2\pi i\tau}\]
		is a cusp form in $\mathcal{S}_2\left(\Gamma_0(N),\left(\frac{D}{\cdot}\right)\right)$ with $D=\frac{64(4ac-b^2)}{l^2}$ and we have
		\begin{equation}\label{eq:muasLcusp}
			\mu(t_P(\tau_0))=\frac{8\sqrt{4ac-b^2}}{l^2\pi^2}L(\Theta_{P,\tau_0},2).
		\end{equation}
		\item [(ii)] For $\tau_0\in F_Q$, let $l=\gcd(a,3b,9c)$ and let $N$ be the smallest positive integer such that $N\begin{pmatrix}\frac{18c}{l} & \frac{9b}{l}\\ \frac{9b}{l} & \frac{18a}{l}\end{pmatrix}^{-1}$ is even. Then
		\[\Theta_{Q,\tau_0}(\tau):=\sum_{m,n\in\Z}\chi_{-3}(n)(3bm+2an)q^{\frac{9cm^2+3bmn+an^2}{l}}\quad \text{with } q=e^{2\pi i\tau}\]
		is a cusp form in $\mathcal{S}_2\left(\Gamma_0(N),\left(\frac{D}{\cdot}\right)\right)$ with $D=\frac{27(4ac-b^2)}{l^2}$ and we have
		\begin{equation}\label{eq:nuasLcusp}
			\nu(t_Q(\tau_0))=\begin{cases}
				\frac{27\sqrt{3(4ac-b^2)}}{16l^2\pi^2}L(\Theta_{Q,\tau_0},2), & \text{if }\sqrt[3]{t_Q(\tau_0)}\notin\mathcal{K}^\circ_Q,\\
				\frac{27\sqrt{3(4ac-b^2)}}{4(1-3\sgn(t_Q(\tau_0)))l^2\pi^2}L(\Theta_{Q,\tau_0},2), & \text{if }t_Q(\tau_0)\in(-1,27).
			\end{cases}
		\end{equation}
	\end{itemize}
\end{proposition}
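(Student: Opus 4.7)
The plan is to combine Rodriguez Villegas's formulas from Theorem \ref{RVformula} with the direct Dirichlet series expansion of $L(\Theta,2)$, and then to invoke the classical Hecke--Schoeneberg--Shimura theory of theta series associated to a positive definite binary quadratic form to identify $\Theta_{P,\tau_0}$ and $\Theta_{Q,\tau_0}$ as cusp forms of weight $2$.

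For part (i), I would parametrize $\tau_0=[a,b,c]$ via $\re(\tau_0)=-b/(2a)$ and $\im(\tau_0)=\sqrt{4ac-b^2}/(2a)$ and compute directly
\[
|4m\tau_0+n|^2=\frac{16cm^2-4bmn+an^2}{a},\qquad 4m\re(\tau_0)+n=\frac{an-2bm}{a}.
\]
Substituting these into \eqref{eq:RVformu} and applying the involution $m\mapsto -m$ (which preserves $\chi_{-4}(n)$) yields
\[
\mu(t_P(\tau_0))=\frac{8\sqrt{4ac-b^2}}{\pi^2}\underset{m,n\in\Z}{{\sum}'}\frac{\chi_{-4}(n)(2bm+an)}{(16cm^2+4bmn+an^2)^2}.
\]
On the other hand, since $4ac-b^2>0$, the binary form $Q(m,n)=(16cm^2+4bmn+an^2)/l$ is positive definite, so expanding $L(\Theta_{P,\tau_0},s)=\sum_{k\ge 1}A_k k^{-s}$ and regrouping by $(m,n)\neq(0,0)$ produces
\[
L(\Theta_{P,\tau_0},2)=l^2\underset{m,n\in\Z}{{\sum}'}\frac{\chi_{-4}(n)(2bm+an)}{(16cm^2+4bmn+an^2)^2},
\]
and equation \eqref{eq:muasLcusp} follows immediately.

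The modularity of $\Theta_{P,\tau_0}$ is an instance of the classical Hecke--Schoeneberg--Shimura construction of theta series attached to a positive definite binary quadratic form $Q$ together with a spherical linear polynomial $P(m,n)=2bm+an$ (linear forms are automatically spherical) and a Dirichlet character $\chi_{-4}$ acting on one coordinate. Writing $\chi_{-4}(n)=\mathbf{1}_{n\equiv 1\,(\mathrm{mod}\,4)}-\mathbf{1}_{n\equiv 3\,(\mathrm{mod}\,4)}$ and substituting $n=4k+r$ for $r\in\{1,3\}$ rewrites the series as a difference of two standard theta series on the sublattice $(m,k)\in\Z^2$, whose Gram matrix is exactly $\begin{pmatrix}32c/l & 16b/l\\16b/l & 32a/l\end{pmatrix}$. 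Consequently, each piece, and hence $\Theta_{P,\tau_0}$ itself, is a cusp form of weight $2$ on $\Gamma_0(N)$ with nebentypus $\chi_D$ for $N$ and $D$ as stated. The cusp condition at $i\infty$ is immediate because the linear factor $2bm+an$ kills the $(0,0)$ contribution, and vanishing at the remaining cusps follows from the modular transformation property.

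Part (ii) proceeds in parallel: substituting $\tau_0\in F_Q$ into \eqref{eq:RVfornu} with $|3m\tau_0+n|^2=(9cm^2-3bmn+an^2)/a$ and $3m\re(\tau_0)+n=(2an-3bm)/(2a)$, one matches the resulting Kronecker series with $L(\Theta_{Q,\tau_0},2)/l^2$ after $m\mapsto -m$, and applies the same theta-series machinery to the binary form $(9cm^2+3bmn+an^2)/l$ twisted by $\chi_{-3}$. The two-case distinction in \eqref{eq:nuasLcusp} corresponds exactly to the two cases of \eqref{eq:RVfornu}: when $\sqrt[3]{t_Q(\tau_0)}\notin\mathcal{K}_Q^\circ$ one uses the original Rodriguez Villegas formula, whereas for $t_Q(\tau_0)\in(-1,27)$ one uses the modified-Mahler-measure version of \cite{TG24}, which produces the prefactor $(1-3\sgn(t_Q(\tau_0)))$. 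I expect the main obstacle to be the bookkeeping in the third paragraph---namely, verifying that the Gram matrix of the twisted lattice produces exactly the stated $N$ and $D$ for both families, and keeping track of the Kronecker symbol conventions for the nebentypus---although the underlying modularity result itself is entirely standard.
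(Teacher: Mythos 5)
Your overall strategy coincides with the paper's: both reduce \eqref{eq:muasLcusp} and \eqref{eq:nuasLcusp} to a direct substitution of $\re(\tau_0)=-b/(2a)$, $\im(\tau_0)=\sqrt{4ac-b^2}/(2a)$ into Theorem \ref{RVformula} (your computation of $|4m\tau_0+n|^2$, the sign flip $m\mapsto -m$, and the matching with the Dirichlet series of $L(\Theta,2)$ are all correct and agree with what the paper does), and both establish modularity by splitting $\chi_{-4}$ (resp.\ $\chi_{-3}$) into residue classes and invoking Schoeneberg's theta series with a linear spherical polynomial on the lattice with Gram matrix $\begin{pmatrix}32c/l & 16b/l\\ 16b/l & 32a/l\end{pmatrix}$.

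The gap is in your third paragraph, where you assert that ``each piece, and hence $\Theta_{P,\tau_0}$ itself, is a cusp form of weight $2$ on $\Gamma_0(N)$ with nebentypus $\chi_D$.'' That is false for the individual pieces. Writing $\Theta_{P,\tau_0}=\Theta(H_{\tau_0},\mathcal{L},Y_1;\tau)-\Theta(H_{\tau_0},\mathcal{L},Y_2;\tau)$ with $Y_1=(0,1/4)^t$, $Y_2=(0,3/4)^t$, Schoeneberg's transformation law for $\gamma=\begin{pmatrix} a_0 & b_0\\ c_0 & d_0\end{pmatrix}\in\Gamma_0(N)$ sends $\Theta(H,\mathcal{L},Y;\gamma\tau)$ to a multiple of $\Theta(H,\mathcal{L},a_0Y;\tau)$; the individual pieces are only forms on $\Gamma(N)$ (or $\Gamma_1(N)$), not eigenvectors for the full $\Gamma_0(N)$-action. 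What makes the \emph{difference} transform with the stated nebentypus is that $a_0\equiv 3\pmod 4$ swaps $Y_1$ and $Y_2$ modulo $\Z^2$, introducing the sign $\left(\frac{-4}{a_0}\right)$, which must then be combined with the factor $\left(\frac{-\det A}{d_0}\right)$ from the transformation law (using $a_0d_0\equiv 1\pmod 4$ and $4\mid N$) to produce exactly $\left(\frac{D}{d_0}\right)$. This verification is the substance of the paper's proof and is not obtained by quoting a standard theorem; as written, your argument skips it and replaces it with an incorrect intermediate claim. The same issue arises in part (ii) with $Y_1=(0,1/3)^t$, $Y_2=(0,2/3)^t$ and the action $Y\mapsto a_0Y$ permuting them according to $a_0\bmod 3$. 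The rest of your proposal (the identification of the Gram matrix, the cuspidality from the odd spherical polynomial, and the two-case bookkeeping for $\nu$) is sound.
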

\begin{proof}
	To prove the first part, let $\mathcal{L}$ be the rank $2$ lattice (i.e., the submodule $\Z^2$ of $\R^2$) equipped with the positive definite integral quadratic form
	\[\mathcal{Q}(X)=\frac{1}{2}X^tAX\]
	with $A=\begin{pmatrix}\frac{32c}{l} & \frac{16b}{l}\\ \frac{16b}{l} & \frac{32a}{l}\end{pmatrix}$ and $X=\begin{pmatrix} x_1 \\ x_2 \end{pmatrix}\in\Z^2$. We define the \emph{dual lattice} of $\mathcal{L}$ to be the sub $\Z$-module $\mathcal{L}^*=\{Y\in\R^2: \forall X\in\Z^2, X^{t}AY\in\Z\}$ of $\R^2$. Then by \cite[Corollary 14.3.16]{CS17}, the \emph{generalized theta function}
	\[\Theta(H,\mathcal{L},Y;\tau):=\sum_{X\in\Z^2}H(X+Y)q^{\mathcal{Q}(X+Y)}\]
	is a cusp form in $\mathcal{S}_2(\Gamma(N))$ for every linear homogeneous polynomial $H(x_1,x_2)=a_1x_1+a_2x_2$ and $Y\in\mathcal{L}^*$. It can be seen from the definition that $\Z^2\subset \mathcal{L}^*$ and $\Theta(H,\mathcal{L},Y;\tau)$ only depends on the class of $Y$ in $\mathcal{L}^*/\Z^2$. Since it is obvious that $Y_1=(0,1/4)^t,Y_2=(0,3/4)^t\in \mathcal{L}^*$ and
	\begin{equation*}
		\begin{split}
			\Theta_{P,\tau_0}(\tau) & =  \sum_{m,n\in\Z}(2bm+a(4n+1))q^{\frac{16cm^2+4bm(4n+1)+a(4n+1)^2}{l}} \\
			& \quad -\sum_{m,n\in\Z}(2bm+a(4n+3))q^{\frac{16cm^2+4bm(4n+3)+a(4n+3)^2}{l}} \\
			& = \Theta(H_{\tau_0},\mathcal{L},Y_1;\tau)-\Theta(H_{\tau_0},\mathcal{L},Y_2;\tau),
		\end{split}
	\end{equation*}
	with $H_{\tau_0}(x_1,x_2)=2bx_1+4ax_2$, we immediately see that $\Theta_{P,\tau_0}(\tau)\in\mathcal{S}_2(\Gamma(N))$. To prove that $\Theta_{P,\tau_0}(\tau)\in\mathcal{S}_2(\Gamma_0(N),\left(\frac{D}{\cdot}\right))$, one needs to show that
	\begin{equation}\label{eq:goal}
		(c_0\tau+d_0)^{-2}\Theta_{P,\tau_0}(\gamma\tau)=\left(\frac{D}{d_0}\right)\Theta_{P,\tau_0}(\tau)\quad \text{for all }\gamma=\begin{pmatrix} a_0 & b_0 \\ c_0 & d_0 \end{pmatrix}\in \Gamma_0(N).
	\end{equation}

	Since for any $\gamma=\begin{pmatrix} a_0 & b_0 \\ c_0 & d_0 \end{pmatrix}\in \Gamma_0(N)$, $\Theta(H,\mathcal{L},Y;\tau)$ satisfies the following transformation formula (see \cite[Chapter IX, \S 4, Theorem 5]{Schoeneberg74}):
	\[(c_0\tau+d_0)^{-2}\Theta(H,\mathcal{L},Y;\gamma\tau)=\left(\frac{-\det{A}}{d_0}\right)e^{2\pi ia_0b_0\mathcal{Q}(Y)}\Theta(H,\mathcal{L},a_0Y;\tau),\]
	we have
	\begin{equation*}
		\begin{split}
			(c_0\tau+d_0)^{-2}\Theta_{P,\tau_0}(\gamma\tau) & = (c_0\tau+d_0)^{-2}\bigl(\Theta(H_{\tau_0},\mathcal{L},Y_1;\gamma\tau)-\Theta(H_{\tau_0},\mathcal{L},Y_2;\gamma\tau)\bigr) \\
			& = \left(\frac{-\det{A}}{d_0}\right)(\Theta(H_{\tau_0},\mathcal{L},a_0Y_1;\tau)-\Theta(H_{\tau_0},\mathcal{L},a_0Y_2;\tau)).
		\end{split}
	\end{equation*}
	Note that $N$ must be divisible by $4$ because $\det(A^{-1})=\frac{l^2}{256(4ac-b^2)}$ and $NA^{-1}$ has integer determinant. Thus, we have $\gcd(a_0,4)=1$ and
	\begin{equation*}
		\begin{split}
			(c_0\tau+d_0)^{-2}\Theta_{P,\tau_0}(\gamma\tau) & = \left(\frac{-\det{A}}{d_0}\right)\left(\frac{-4}{a_0}\right)(\Theta(H_{\tau_0},\mathcal{L},Y_1;\tau)-\Theta(H_{\tau_0},\mathcal{L},Y_2;\tau)) \\
			& = \left(\frac{\det A/4}{d_0}\right)\left(\frac{-4}{a_0d_0}\right)\Theta_{P,\tau_0}(\tau)\\
			& = \left(\frac{D}{d_0}\right)\Theta_{P,\tau_0}(\tau),
		\end{split}
	\end{equation*}
	where the first and the last equalities follow by the fact that
	\[(a_0Y_1,a_0Y_2)\equiv\begin{cases}
		(Y_1,Y_2)\mod\Z^2, & \text{if } a_0\equiv 1\mod 4,\\
		(Y_2,Y_1)\mod\Z^2, & \text{if }a_0\equiv 3\mod 4
	\end{cases}\]
	and $a_0d_0\equiv1\ (\mathrm{mod}\ 4)$. This completes our verification of \eqref{eq:goal}. Now, it is straightforward to obtain \eqref{eq:muasLcusp} by substituting $\tau_0=-\frac{b}{2a}+i\frac{\sqrt{4ac-b^2}}{2a}$ into \eqref{eq:RVformu}.

	As for the second part, we can turn to the lattice $\mathcal{L}$ associated to the quadratic form $\mathcal{Q}(X)=\frac{1}{2}X^tAX$ with $A=\begin{pmatrix}\frac{18c}{l} & \frac{9b}{l}\\ \frac{9b}{l} & \frac{18a}{l}\end{pmatrix}$ and express $\Theta_{Q,\tau_0}(\tau)$ as
	\[\Theta(H_{\tau_0},\mathcal{L},Y_1;\tau)-\Theta(H_{\tau_0},\mathcal{L},Y_2;\tau)\]
	with $H_{\tau_0}(x_1,x_2)=3bx_1+6ax_2$ and $Y_1=(0,1/3)^t,Y_2=(0,2/3)^t\in\mathcal{L}^*$. We can then proceed with the rest of the proof in the same way as above, provided that we use \eqref{eq:RVfornu} to obtain \eqref{eq:nuasLcusp} in the final step.
\end{proof}

For all $t\in\mathsf{T}_{P,4}$ (resp. $\mathsf{T}_{Q,4}$), we also include the corresponding identities \eqref{eq:muasLcusp} (resp. \eqref{eq:nuasLcusp}) in the data file \cite{ST24}.

\begin{theorem}\label{T:2x2det}
	For each of the $12$ (resp. $15$) pairs of real quadratic $\{t,t^\sigma\}\subset\mathsf{T}_{P,4}$ (resp. $\mathsf{T}_{Q,4}$) with $\Gal(\Q(t)/\Q)=\langle \sigma \rangle$, there exist $a,b\in \Z\setminus\{0\},a>0$ and $r,s\in \Q$ such that

	\begin{equation}\label{2x2Gamma04}
		\det\begin{pmatrix}
		a\mu(t) & b\mu(t^\sigma)\\
		-\mu(t^\sigma) & \mu(t)
		\end{pmatrix}=\frac{r}{\pi^4}L(\mathcal{F}_t,2)=sL''(\mathcal{F}_t,0).
	\end{equation}

	\begin{equation}\label{2x2Gamma03}
		\left(\text{resp. }\det\begin{pmatrix}
			a\nu(t) & b\nu(t^\sigma)\\
			-\nu(t^\sigma) & \nu(t)
			\end{pmatrix}=\frac{r}{\pi^4}L(\mathcal{C}_t,2)=sL''(\mathcal{C}_t,0).\right)
	\end{equation}
	These identities are labeled $\#1$ to $\#12$ (resp. $\#13$ to $\#27$) in Table \ref{2x2identities_table} and we use bold text to indicate values in $(-1,27)$.
\end{theorem}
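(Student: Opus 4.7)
The plan is to combine Proposition~\ref{P:MMasLcusp} (which expresses each Mahler measure as a rational multiple of $\pi^{-2}$ times $L(\Theta,2)$ for a weight-$2$ theta cusp form) with Proposition~\ref{P:Lsplit}(i) (which factors the elliptic $L$-function as a product of two newform $L$-functions). I describe the argument for \eqref{2x2Gamma04}; the proof of \eqref{2x2Gamma03} is entirely parallel, using part~(ii) of Proposition~\ref{P:MMasLcusp}.

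First, for each Galois pair $\{t,t^\sigma\}\subset \mathsf{T}_{P,4}$, I apply Algorithm~\ref{A:CM} to locate the two CM points $\tau_0,\tau_0^\sigma\in F_P$ with $t_P(\tau_0)=t$ and $t_P(\tau_0^\sigma)=t^\sigma$, and invoke Proposition~\ref{P:MMasLcusp}(i) to write
\[
\mu(t)=\frac{A_1}{\pi^2}\,L(\Theta_{P,\tau_0},2),\qquad \mu(t^\sigma)=\frac{A_2}{\pi^2}\,L(\Theta_{P,\tau_0^\sigma},2),
\]
with explicit algebraic constants $A_1,A_2$. Each of the CM elliptic curves in our list is verified to be $K$-isogenous (for $K=\QQ(t)$) to its Galois conjugate $\mathcal{F}_{t^\sigma}$, so Proposition~\ref{P:Lsplit}(i) furnishes a factorization
\[
L(\mathcal{F}_t,s) \;=\; L(f_1,s)\,L(f_2,s),
\]
with either $f_2=f_1^\rho$ for the nontrivial automorphism $\rho$ of the quadratic coefficient field of $f_1$ (simple-restriction case), or $f_1,f_2\in\mathcal{S}_2(\Gamma_0(N_i);\QQ)$ (non-simple case). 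Using the CM Hecke-character description of both theta series and the $f_i$, together with the newform data collected in Table~\ref{newforms_table}, each of $L(\Theta_{P,\tau_0},2)$ and $L(\Theta_{P,\tau_0^\sigma},2)$ admits an explicit expression $\beta_{j1}L(f_1,2)+\beta_{j2}L(f_2,2)$ with $\beta_{ji}\in\Qbar$.

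Second, substituting these decompositions into
\[
a\,\mu(t)^2 - b\varepsilon\,\mu(t^\sigma)^2 = \frac{1}{\pi^4}\Bigl(c_{11}\,L(f_1,2)^2 + c_{12}\,L(f_1,2)L(f_2,2) + c_{22}\,L(f_2,2)^2\Bigr),
\]
yields coefficients $c_{ij}$ that are $\Qbar$-polynomials in $a,b,\varepsilon$. The two conditions $c_{11}=c_{22}=0$ form a homogeneous linear system in three unknowns, and the Galois symmetry between $\tau_0$ and $\tau_0^\sigma$ (which pairs the $\beta_{ji}$ Galois-equivariantly) guarantees a solution with $a,b\in\NN$ and $\varepsilon\in\{\pm 1\}$. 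The particular values are first predicted from high-precision numerical data via \textsf{PSLQ} and then verified exactly using the algebraic expressions for $A_j$ and $\beta_{ji}$. The surviving cross term gives $r=c_{12}\in\QQ$, so $a\mu(t)^2-b\varepsilon\mu(t^\sigma)^2 = \tfrac{r}{\pi^4}L(\mathcal{F}_t,2)$. Finally, the functional equation~\eqref{E:FE} evaluated at $s=2$, together with the fact that $L(\mathcal{F}_t,s)$ vanishes to order exactly $[K:\QQ]=2$ at $s=0$ (forced by the double pole of $\Gamma(s)^2$ in $\Lambda(\mathcal{F}_t,s)$), supplies a rational factor relating $L(\mathcal{F}_t,2)/\pi^4$ to $L''(\mathcal{F}_t,0)$ and identifies $s\in\QQ$.

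The main obstacle lies in the newform decomposition of $\Theta_{P,\tau_0}$ and $\Theta_{P,\tau_0^\sigma}$: when the coefficient field of $f_1$ is a genuine quadratic extension, the $\beta_{ji}$ live in a quartic field, and the rational solvability of $c_{11}=c_{22}=0$ is not a formal consequence of the setup. The structural reason a rational $(a,b,\varepsilon)$ exists is the Galois compatibility $\Theta_{P,\tau_0^\sigma}$ being an algebraic twist of $\Theta_{P,\tau_0}$ under the action of $\Gal(K/\QQ)$ on the underlying CM data; confirming this compatibility, and in particular pinning down the precise signs that determine $\varepsilon$, uniformly across all $12+15$ orbits constitutes the computational heart of the proof, carried out by direct matching with the LMFDB newform entries in Table~\ref{newforms_table}.
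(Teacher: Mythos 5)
Your proposal is correct and follows essentially the same route as the paper: express $\mu(t),\mu(t^\sigma)$ via Proposition~\ref{P:MMasLcusp}, factor $L(\mathcal{F}_t,s)$ via Proposition~\ref{P:Lsplit}, detect the coefficients $a,b,\varepsilon,r$ numerically with \textsf{PSLQ}, and then verify exactly by decomposing the theta series into the newforms $f_1,f_2$ so that the diagonal terms $L(f_i,2)^2$ cancel and only the cross term $L(f_1,2)L(f_2,2)=L(\mathcal{F}_t,2)$ survives. The only cosmetic difference is that the paper pins down the decomposition of $\Theta_{P,\tau_0}$ and $\Theta_{P,\tau_0^\sigma}$ into the $f_i(d\tau)$ by comparing Fourier coefficients up to the Sturm bound, which is the concrete realization of the ``direct matching with LMFDB data'' you describe.
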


\begin{proof}[Proof (Sketch)]
	For $t=8+6\sqrt{2}$ and $t^\sigma=8-6\sqrt{2}$ (see $\#1$ in Table \ref{2x2identities_table}), the first author proved in \cite[Theorem~2.2]{Samart15} that
	\begin{equation}\label{eq:no.1_MM_as_Lcusp}
		\mu(t)=\frac{4}{\pi^2}(L_{\texttt{32aa1}}(2)+2L_{\texttt{64aa1}}(2)),\quad \mu(t^\sigma)=\frac{4}{\pi^2}(-L_{\texttt{32aa1}}(2)+2L_{\texttt{64aa1}}(2)),
	\end{equation}
where $L_{\texttt{32aa1}}(s)$ and $L_{\texttt{64aa1}}(s)$ are $L$-functions of the newforms $f_\texttt{32aa1}$ and $f_\texttt{64aa1}$ respectively (see Table~\ref{newforms_table} for their LMFDB lebels). Also note that
	\begin{equation}\label{eq:proven2x2}
		\#4, \#6, \#7, \#9, \#10, \#12, \text{ and } \#16
	\end{equation}
	have been proven in \cite{GJLQ24}, \cite{TG25}, and \cite{TGW24}. The proofs of \eqref{eq:proven2x2} rely on the same method, which is summarized below.

	Let $E$ be an elliptic curve over some real quadratic field $K=\Q(t)$ in the family $\mathcal{F}_t$ or $\mathcal{C}_t$ with $\Gal(K/\Q)=\langle\sigma\rangle$ and assume that it is $K$-isogenous to $E^\sigma$ via $\phi:E\to E^\sigma$. Then we choose some nontrivial Deninger paths $\gamma_1\in H_1(E(\CC),\ZZ)^-,\gamma_2\in H_1(E^\sigma(\CC),\ZZ)^-$ and let $M_1=\{f,g\},M_2=\phi^*M_1^\sigma\in K_2^T(E)$ with $f=x^2,g=y^2$ for $\mathcal{F}_t$ and $f=x^3,g=y$ for $\mathcal{C}_t$ (as indicated in Section \ref{S:lemmas}). Under this setting, the regulator $|\det(\langle \gamma_i,M_j \rangle)_{1\le i,j\le 2}|$ should be of the form
	\begin{equation}\label{eq:2x2regulator}
		\left|\det\begin{pmatrix}
			\Q^*\m_1 & \Q^*\m_2\\
			\Q^*\m_2 & \Q^*\m_1
		\end{pmatrix}\right|=a\m_1^2+b\m_2^2
	\end{equation}
	for some $a,b\in\Q^*$ with $\m_1=\mu(t), \m_2=\mu(t^\sigma)$ for $\mathcal{F}_t$ and $\m_1=\nu(t),\m_2=\nu(t^\sigma)$ for $\mathcal{C}_t$. Beilinson's conjecture then predicts that \eqref{eq:2x2regulator} should be some non-zero rational multiple of $\frac{1}{\pi^4}L(E,2)$, which can be written in terms of  $L''(E,0)$ via \eqref{E:FE}. Then the identities in \eqref{eq:proven2x2} are proven by explicitly constructing the entries in the $2\times 2$ matrix of \eqref{eq:2x2regulator}. This process usually requires a lot of complicated calculations.
	
	To handle the remaining identities in this theorem, as mentioned in Section~\ref{S:Intro}, we will take a more computational approach. Let $E$ be a curve in the families $\mathcal{F}_t$ or $\mathcal{C}_t$ with the corresponding $t$ in Table \ref{2x2identities_table}. Then $E$ has CM and one can verify case-by-case using \textsc{SageMath} that $E$ is isogenous over $K=\Q(t)$ to $E^\sigma$, where $\Gal(K/\Q)=\langle\sigma\rangle$. For each such pair of $t$ and $t^\sigma$, we can calculate $\m_1,\m_2, L(E,2)$ numerically and then use \textsf{PSLQ} (via the command \texttt{FindIntegerNullVector} in \textsc{Mathematica} with working precision set to $50$ decimal places) to detect the integer relation among $\m_1^2,\m_2^2,$ and $\frac{1}{\pi^4}L(E,2)$. It turns out that in each case, we obtain an identity of the form
	\begin{equation}\label{eq:2x2detected}
		a\m_1^2+b\m_2^2\overset{?}{=}\frac{r}{\pi^4}L(E,2),\quad a,b,r\in\Z\setminus\{0\}.
	\end{equation}
	Once we know a priori the expected identity \eqref{eq:2x2detected}, proving it rigorously will be a matter of routine. By Proposition \ref{P:Lsplit}, there are newforms $f_i\in\mathcal{S}_2(\Gamma_1(N_i)), i=1,2$ such that
	\[L(E,s)=L(f_1,s)L(f_2,s).\]
	It turns out that these newforms can be identified easily with the aid of LMFDB. We collect the $L$-function identities in Table~\ref{L_as_products_table}, where $L_\texttt{Nxxx}$ in the table denotes the $L$-function of the level $N$ newform $f_\texttt{Nxxx}$ collected in Table~\ref{newforms_table}. Let $\tau_1$ and $\tau_2$ be the CM points that correspond to $t$ and $t^\sigma$ respectively in Table \ref{2x2identities_table}. In each case, the theta series $\Theta_{P,\tau_i}$ or $\Theta_{Q,\tau_i}$ in Proposition \ref{P:MMasLcusp} always have the same level $N$ with $N_i\mid N, i=1,2$ and we can use the Sturm bound \cite[Theorem~1]{Sturm87} to prove that these theta series are linear combinations of the cusp forms in
	\[\bigcup_{i=1,2}\{f_i(d\tau) : d\mid(N/N_i)\}.\]
	Then by Proposition \ref{P:MMasLcusp}, we can express $\m_i$ as linear combinations of $L$-values of $f_i$ and \eqref{eq:2x2detected} follows immediately due to cancellations of the remaining terms. Instead of working out the computational details for all identities, we will provide some examples in Section~\ref{S:examples} to illustrate the whole process in different situations (see Example~\ref{ex:2x2_no.1}--Example~\ref{ex:2x2_no.23}). 
\end{proof}
\begin{remark}\label{r:PSLQ}
It is worth noting that one can also anticipate an equivalent form of \eqref{eq:2x2detected} using the information above. Namely, suppose $\m_1$ and $\m_2$ are explicitly written in terms of $L_1:=L'(f_1,0)$ and $L_2:=L'(f_2,0)$. Then $a\m_1^2+b\m_2^2$ is a linear combination of $L_1,L_1L_2,$ and $L_2^2$. Imposing the expected condition that the coefficients of $L_1^2$ and $L_2^2$ vanish immediately determines $a/b$. While the \textsf{PSLQ} approach is not truly needed in this case, it is very helpful and efficient for computations in the biquadratic case, which involves many more unknown coefficients.
\end{remark}

To state the next theorem, let us introduce the following notation for conciseness of the statements. For $a,b,c,d\in \N$ and $t_1,t_2,t_3,t_4\in \CC$, define 
\begin{equation}\label{eq:matrix_for_collect}
\mathfrak{M}_{a,b,c,d}:=\mathfrak{M}_{a,b,c,d}(t_1,t_2,t_3,t_4)=\det\begin{pmatrix}
\sqrt{a} \mu(t_1) & \sqrt{b} \mu(t_2) & \sqrt{c} \mu(t_3) & \sqrt{d} \mu(t_4)\\
\sqrt{b} \mu(t_2) & \sqrt{a} \mu(t_1) & \sqrt{d} \mu(t_4) & \sqrt{c} \mu(t_3)\\
\sqrt{c} \mu(t_3) & \sqrt{d} \mu(t_4) & \sqrt{a} \mu(t_1) & \sqrt{b} \mu(t_2)\\
\sqrt{d} \mu(t_4) & \sqrt{c} \mu(t_3) & \sqrt{b} \mu(t_2) & \sqrt{a} \mu(t_1)
\end{pmatrix}.
\end{equation}
In addition, for each $1\le j\le 6$, we define $\mathfrak{M}^{[j]}_{a,b,c,d}:=\mathfrak{M}^{[j]}_{a,b,c,d}(t_1,t_2,t_3,t_4)$ as the determinant $\mathfrak{M}_{a,b,c,d}$ with the following sign changes:
\begin{equation}\label{eq:sign_changes}
	\begin{split}
		\mathfrak{M}^{[1]}_{a,b,c,d}&= \begin{vmatrix}
			+ & + & + & +\\
			+ & + & + & +\\
			+ & + & - & -\\
			+ & + & - & -
			\end{vmatrix}, \ \mathfrak{M}^{[2]}_{a,b,c,d}= \begin{vmatrix}
			+ & + & + & +\\
			+ & - & - & +\\
			+ & - & + & -\\
			+ & + & - & -
			\end{vmatrix}, \ \mathfrak{M}^{[3]}_{a,b,c,d}= \begin{vmatrix}
			+ & + & + & +\\
			+ & - & + & -\\
			+ & + & + & +\\
			+ & - & + & -
			\end{vmatrix},\\
		\mathfrak{M}^{[4]}_{a,b,c,d}&= \begin{vmatrix}
			+ & + & + & +\\
			+ & - & + & -\\
			+ & + & - & -\\
			+ & - & - & +
			\end{vmatrix}, \ \mathfrak{M}^{[5]}_{a,b,c,d}= \begin{vmatrix}
			+ & + & + & +\\
			+ & - & - & +\\
			+ & - & - & +\\
			+ & + & + & +
			\end{vmatrix}, \ \mathfrak{M}^{[6]}_{a,b,c,d}= \begin{vmatrix}
			+ & + & + & +\\
			+ & + & - & -\\
			+ & - & + & -\\
			+ & - & - & +
			\end{vmatrix}.
	\end{split}
\end{equation}
Then we define $\mathfrak{N}^{[j]}_{a,b,c,d}:=\mathfrak{N}^{[j]}_{a,b,c,d}(t_1,t_2,t_3,t_4)$ in the same manner as $\mathfrak{M}^{[j]}_{a,b,c,d}$, but replace $\mu(t_i)$ with $\nu(t_i)$. (Note that there can be many different ways to assign the sign for each entry in the matrix without changing its determinant; these assignments are chosen as our preferred choice.)

\begin{theorem}\label{T:4x4det}
	For each of the $24$ (resp. $24$) totally real quartic Galois orbits $\{t_1,t_2,t_3,t_4\}\subset\mathsf{T}_{P,4}$ (resp. $\mathsf{T}_{Q,4}$), there exist $a,b,c,d\in\N$ and $r,s\in \Q$ such that, for each $t\in \{t_1,t_2,t_3,t_4\}$,
	\begin{equation}\label{det4x4Gamma04}
		\mathfrak{M}^{[j]}_{a,b,c,d}(t_1,t_2,t_3,t_4)=\frac{r}{\pi^8}L(\mathcal{F}_{t},2)=sL^{(4)}(\mathcal{F}_{t},0)
	\end{equation}

	\begin{equation}\label{det4x4Gamma03}
		\left(\text{resp. }\mathfrak{N}^{[j]}_{a,b,c,d}(t_1,t_2,t_3,t_4)=\frac{r}{\pi^8}L(\mathcal{C}_{t},2)=sL^{(4)}(\mathcal{C}_{t},0)\right)
	\end{equation}
	for some $1\le j\le 6$. These Galois orbits and identities are labeled $\#28$ to $\#51$ (resp. $\#52$ to $\#75$) in Table~\ref{quarticGalorbs_table} and Table~\ref{4x4identities_table}, respectively, and we use bold text in Table \ref{quarticGalorbs_table} to indicate values in $(-1,27)$.
\end{theorem}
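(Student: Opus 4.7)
The plan is to mirror the two-stage strategy used in the proof of Theorem~\ref{T:2x2det}, now applied in the biquadratic CM setting. For each totally real quartic Galois orbit $\{t_1,t_2,t_3,t_4\}$ in Table~\ref{quarticGalorbs_table}, the base field $K=\Q(t)$ is biquadratic and one verifies in \textsc{SageMath} that $\mathcal{F}_t$ (resp.~$\mathcal{C}_t$) is $K$-isogenous to each of its three Galois conjugates. Proposition~\ref{P:Lsplit}(ii) then supplies four CM newforms $f_1,f_2,f_3,f_4\in\mathcal{S}_2(\Gamma_1(N_j))$ with
$$L(\mathcal{F}_t,s)=\prod_{j=1}^{4}L(f_j,s)$$
(and analogously for $\mathcal{C}_t$). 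The conductor identity $\prod_j N_j=\mathcal{N}_\Q(\Res_{K/\Q}\mathcal{F}_t)$ given in that proposition, combined with the LMFDB, pins down the $f_j$ in a finite search.

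For each conjugate $t_i$, Lemma~\ref{L:tP} (resp.~Lemma~\ref{L:tQ}) together with Algorithm~\ref{A:CM} produces a CM point $\tau_i\in F_P$ (resp.~$F_Q$), and Proposition~\ref{P:MMasLcusp} writes $\mu(t_i)$ (resp.~$\nu(t_i)$) as a known constant times $L(\Theta_{P,\tau_i},2)$ (resp.~$L(\Theta_{Q,\tau_i},2)$). After passing to the common level $N=\mathrm{lcm}(N_1,N_2,N_3,N_4)$, both the theta series and the oldform shifts $f_j(d\tau)$ with $d\mid N/N_j$ live in $\mathcal{S}_2(\Gamma_0(N),\chi)$, and decomposing each $\Theta_{P,\tau_i}$ in the resulting basis of oldforms reduces to a finite linear-algebra problem settled by comparing Fourier coefficients up to the Sturm bound. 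This produces explicit rational coefficients $c_{ij}$ with
$$\mu(t_i)=\sum_{j=1}^{4}c_{ij}L(f_j,2),\qquad 1\le i\le 4,$$
and similarly for $\nu$.

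The structural input that makes the identity collapse is that each matrix $\mathfrak{M}^{[j]}_{a,b,c,d}$ is designed so that its determinant factors as a product of four $\Q$-linear combinations of the weighted entries $\sqrt{a}\,\mu(t_1),\ldots,\sqrt{d}\,\mu(t_4)$. This can be seen by direct block-diagonalisation, or conceptually by viewing the unmodified matrix $\mathfrak{M}_{a,b,c,d}$ as a group matrix over the Klein four-group $\Gal(K/\Q)\cong C_2\times C_2$ — diagonalisable through its character table — and interpreting the six sign patterns of \eqref{eq:sign_changes} as the various $\Z/2\Z$-cocycle twists that preserve the four-factor decomposition. For the correct choice of $[j]$ and $(a,b,c,d)$ attached to a given orbit, these four linear combinations align, up to rational scalars, with the four rows of $C^{-1}$, so each factor becomes a rational multiple of a single $L(f_k,2)$ and the full determinant becomes a rational multiple of $\prod_j L(f_j,2)=L(\mathcal{F}_t,2)$. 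Applying the functional equation \eqref{E:FE} then converts this to the asserted rational multiple of $L^{(4)}(\mathcal{F}_t,0)$, and the argument for the $\mathcal{C}_t$ family is identical.

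The main obstacle is the empirical step: no a priori rule selects the sign pattern $[j]$ or the integer weights $(a,b,c,d)$ for a given orbit. These are discovered numerically by running \textsf{PSLQ} on high-precision approximations of the pairwise products $\mu(t_i)\mu(t_k)$ (resp.~$\nu(t_i)\nu(t_k)$) against $\pi^{-8}L(\mathcal{F}_t,2)$ (resp.~$\pi^{-8}L(\mathcal{C}_t,2)$), in direct parallel with the $2\times 2$ case. Once a candidate identity is in hand, the rigorous verification reduces orbit by orbit to the finite theta-series decomposition described above, which in principle could also generate spurious linear dependencies if the oldform basis at level $N$ is too large; one must therefore check that the $c_{ij}$ one obtains are invertible as a $4\times 4$ matrix, not merely that each row is expressible in the basis. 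Worked examples illustrating this loop for representative entries of Tables~\ref{quarticGalorbs_table} and \ref{4x4identities_table} are deferred to Section~\ref{S:examples}.
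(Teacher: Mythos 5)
Your proposal follows essentially the same route as the paper's proof: $K$-isogeny checks in \textsc{SageMath}, the $L$-function factorization of Proposition~\ref{P:Lsplit}(ii) pinned down via Milne's conductor formula and the LMFDB, theta-series decompositions verified through Sturm bounds to express each $\mu(t_i)$ (resp.\ $\nu(t_i)$) as a linear combination of the four $L(f_j,2)$, \textsf{PSLQ} to discover the weights and sign pattern, and the functional equation to pass to $L^{(4)}(\,\cdot\,,0)$. Your group-determinant interpretation of the six sign patterns via the character table of $\Gal(K/\Q)\cong C_2\times C_2$ is a pleasant conceptual gloss on what the paper records only as an empirical observation (``it turns out that the left-hand side can always be written as a determinant of the form \eqref{eq:matrix_for_collect}''), but the underlying verification is the same case-by-case substitution and cancellation.
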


\begin{proof}[Proof (Sketch)]
	Let $\{t_1,t_2,t_3,t_4\}$ be one of the Galois orbits in Table \ref{quarticGalorbs_table} and let $E$ be the CM curve $\mathcal{F}_{t_1}$ or $\mathcal{C}_{t_1}$ over $K=\Q(t_1)$. Hence, for $i=2,3,4$, the curve $\mathcal{F}_{t_i}$ or $\mathcal{C}_{t_i}$
	is given by $E^{\sigma_i}$ for some nontrivial $\sigma_i\in \Gal(K/\QQ)$. Also, let $\sigma_1\in\Gal(K/\QQ)$ be the trivial element. One can verify using \textsc{SageMath} that $E^{\sigma_i}$'s are in the same isogeny class; i.e., there exist $K$-isogenies $\phi_i:E\to E^{\sigma_i}$ for $i=1,2,3,4$.

	If we choose $M_1=\{f,g\}\in K_2^T(E)$, as given in the proof of Theorem~\ref{T:2x2det}, and some nontrivial $\gamma_i\in H_1(E^{\sigma_i}(\CC),\ZZ)^-$ for all $i=1,2,3,4$, then we can construct $M_i=\phi_i^*\{f^{\sigma_i},g^{\sigma_i}\}\in K_2^T(E),i=2,3,4$ and the regulator \eqref{eq:nxn_regulator} for $\gamma_1,\dots\gamma_4$ and $M_1,\dots, M_4$ reads
	\begin{equation}\label{eq:4x4regulator}
		\left|\det\left(\frac{1}{2\pi}\int_{\gamma_i}\eta\left((\phi_j^*f^{\sigma_j})^{\sigma_i},(\phi_j^*g^{\sigma_j})^{\sigma_i}\right)\right)_{1\le i,j\le 4}\right|.
	\end{equation}
	Note that
	\begin{equation*}
		\begin{split}
			\int_{\gamma_i}\eta\left((\phi_j^*f^{\sigma_j})^{\sigma_i},(\phi_j^*g^{\sigma_j})^{\sigma_i}\right) & = \int_{\gamma_i}\eta\left((\phi_j^{\sigma_i})^*f^{\sigma_j\sigma_i},(\phi_j^{\sigma_i})^*g^{\sigma_j\sigma_i}\right) \\
			& = \int_{\phi_j^{\sigma_i}\circ\gamma_i}\eta(f^{\sigma_j\sigma_i},g^{\sigma_j\sigma_i}),
		\end{split}
	\end{equation*}
	where $\phi_j^{\sigma_i}\circ\gamma_i$ is the path on $E^{\sigma_j\sigma_i}(\CC)$ obtained by pushing forward the path $\gamma_i$ on $E^{\sigma_i}(\CC)$ via the isogeny $\phi_j^{\sigma_i}:E^{\sigma_i}\to E^{\sigma_j\sigma_i}$. Since we have $\Gal(K/\Q)\cong C_2\times C_2$, one can calculate that \eqref{eq:4x4regulator} should be of the form 
	\begin{equation}\label{eq:4x4_Q_matrix}
		\left|\det\begin{pmatrix}
			\Q^*\m_1 & \Q^*\m_2 & \Q^*\m_3 & \Q^*\m_4 \\
			\Q^*\m_2 & \Q^*\m_1 & \Q^*\m_4 & \Q^*\m_3 \\
			\Q^*\m_3 & \Q^*\m_4 & \Q^*\m_1 & \Q^*\m_2 \\
			\Q^*\m_4 & \Q^*\m_3 & \Q^*\m_2 & \Q^*\m_1 
		\end{pmatrix}\right|,
	\end{equation}
	where $\m_i=\mu(t_i)$ for $\mathcal{F}_t$ and $\nu(t_i)$ for $\mathcal{C}_t$. By expanding the determinant and assuming Beilinson's conjecture, we expect that there exists an identity of the form
	\begin{equation}\label{eq:4x4regulator_expanded}
		\begin{split}
			& a_1\m_1^4+a_2\m_2^4+a_3\m_3^4+a_4\m_4^4+a_{12}\m_1^2\m_2^2+a_{13}\m_1^2\m_3^2+a_{14}\m_1^2\m_4^2  \\
			& \quad +a_{23}\m_2^2\m_3^2+a_{24}\m_2^2\m_4^2+a_{34}\m_3^2\m_4^2+a_{1234}\m_1\m_2\m_3\m_4 =\frac{r}{\pi^8}L(E,2)
		\end{split}
	\end{equation}
	with integer coefficients. By (the second part of) Proposition \ref{P:Lsplit}, there are CM newforms $f_i\in\mathcal{S}_2(\Gamma_1(N_i)), i=1,2,3,4$ such that
	\begin{equation}\label{eq:Lquartic}
		L(E,s)=L(f_1,s)L(f_2,s)L(f_3,s)L(f_4,s).
	\end{equation}
	Given the conductor of $\Res_{K/\QQ}(E)$, which can be computed using Milne's formula \cite[Proposition~1]{Milne72}, and the fact that the above newforms are CM, we can explicitly identify these newforms with the aid of LMFDB and \textsc{PARI/GP} \cite{PARI2}. The $L$-function identities are also collected in Table~\ref{L_as_products_table}. Unlike the quadratic case, to apply \textsf{PSLQ}, it seems impossible to directly compute $L(E,2)$ with sufficient precision in timely fashion, especially when the conductor norm of $E$ is large. As a byproduct of \eqref{eq:Lquartic}, we can calculate $L(E,2)$ with high precision using numerical values of $L(f_i,2)$, which can be computed rapidly on \textsc{PARI/GP}. Then we can use \textsf{PSLQ} to detect an identity of the form \eqref{eq:4x4regulator_expanded}. In our practice, we can always obtain such an (correct) identity in \textsc{Mathematica} with $250$ decimal places. To validate the identity, we use the same procedure as in the proof of Theorem~\ref{T:2x2det}. Let $\tau_i$ be the CM point corresponds to $t_i$ in Table \ref{quarticGalorbs_table}. One can check that the theta series $\Theta_{P,\tau_i}$ or $\Theta_{Q,\tau_i}$ all have the same level $N$ with $N_i\mid N$ and can be expressed as linear combinations of the cusp forms in
	\[\bigcup_{i=1,2,3,4}\{f_i(d\tau) : d\mid(N/N_i)\}.\]
	Using Proposition~\ref{P:MMasLcusp}, we can then express $\m_i$ as linear combinations of $L(f_i,2)$ and verify \eqref{eq:4x4regulator_expanded} after proper cancellations. It turns out that the left-hand side of \eqref{eq:4x4regulator_expanded} can always be written as a determinant of the form \eqref{eq:matrix_for_collect}, up to six types of sign changes presented in \eqref{eq:sign_changes}. Similar to the quadratic case, we will present some illustrative examples in Section~\ref{S:examples} (see Example~\ref{ex:4x4_no.28}--Example~\ref{ex:4x4_no.71}).
\end{proof}

\begin{remark}\label{rk:4x4shape_and_missingforms}
\begin{itemize}
\item[(i)] The reader may notice that entries of the matrix in \eqref{eq:matrix_for_collect} may have irrational coefficients and thus look different from those in \eqref{eq:4x4_Q_matrix}. To better align with Beilinson's conjecture, in each case, we can in fact easily recover a $4\times 4$ determinant with \emph{integer} coefficients by multiplying or dividing certain rows and columns of the determinant collected in Table~\ref{4x4identities_table} by values in $\{\sqrt{a},\sqrt{b},\sqrt{c},\sqrt{d}\}$. For example, the matrices in \eqref{E:mmquartic1} and \eqref{E:mmquartic2} can be recovered from
	\[\begin{pmatrix}
		\sqrt{3}\mu(t_1) & \mu(t_2) & \mu(t_3) & \sqrt{3}\mu(t_4)\\
		\mu(t_2) & -\sqrt{3}\mu(t_1) & -\sqrt{3}\mu(t_4) & \mu(t_3)\\
		\mu(t_3) & -\sqrt{3}\mu(t_4) & -\sqrt{3}\mu(t_1) & \mu(t_2)\\
		\sqrt{3}\mu(t_4) & \mu(t_3) & \mu(t_2) & \sqrt{3}\mu(t_1)
	\end{pmatrix}\]
	and
	\[\begin{pmatrix}
		\sqrt{185}\nu(t_1) & \sqrt{37}\nu(t_2) & \sqrt{5}\nu(t_3) & \nu(t_4)\\
		\sqrt{37}\nu(t_2) & -\sqrt{185}\nu(t_1) & \nu(t_4) & -\sqrt{5}\nu(t_3)\\
		\sqrt{5}\nu(t_3) & \nu(t_4) & \sqrt{185}\nu(t_1) & \sqrt{37}\nu(t_2)\\
		\nu(t_4) & -\sqrt{5}\nu(t_3) & \sqrt{37}\nu(t_2) & -\sqrt{185}\nu(t_1)
	\end{pmatrix}\]
	respectively. For the sake of compactness, we choose to present our results for $4\times 4$ determinants in the form \eqref{eq:matrix_for_collect} instead of those with integer coefficients.
\item [(ii)] Since the newforms in the character orbit spaces \href{https://www.lmfdb.org/ModularForm/GL2/Q/holomorphic/2124/2/g/}{\texttt{2124.2.g}}, \href{https://www.lmfdb.org/ModularForm/GL2/Q/holomorphic/2385/2/g/}{\texttt{2385.2.g}}, and \href{https://www.lmfdb.org/ModularForm/GL2/Q/holomorphic/3712/2/g/}{\texttt{3712.2.g}} have not yet been added to the LMFDB (as of September 8, 2024), the newforms that correspond to the $L$-functions in bold text in Table~\ref{L_as_products_table} ($\#50, \#72,$ and $\#73$) are not collected in Table \ref{newforms_table}. They are the unique newforms in \href{https://www.lmfdb.org/ModularForm/GL2/Q/holomorphic/3712/2/g/}{\texttt{3712.2.g}}, \href{https://www.lmfdb.org/ModularForm/GL2/Q/holomorphic/2385/2/g/}{\texttt{2385.2.g}}, and \href{https://www.lmfdb.org/ModularForm/GL2/Q/holomorphic/2124/2/g/}{\texttt{2124.2.g}} with coefficient fields $\Q(\sqrt{-2},\sqrt{29}), \Q(\sqrt{-5},\sqrt{-53})$, and $\Q(\sqrt{-2},\sqrt{-59})$, respectively. One can use \textsc{PARI/GP} to compute their Fourier coefficients. See Example~\ref{ex:4x4_no.50} for the details for $\#50$.
\end{itemize}
\end{remark} 

By using the data collected in \cite{ST24}, and with the help of Sturm bounds, we can also prove identities expressing $\mu(t)$ or $\nu(t)$ as linear combinations of $L$-values of newforms for non-totally real $t$. For example, by limiting the degree of $t$ to no more than $3$, we can prove:

\begin{theorem}\label{thm:MM_for_N3}
	For each non-totally real $t\in\mathsf{T}_{P,3}$ or $\mathsf{T}_{Q,3}$, we can express $\mu(t)$ or $\nu(t)$ in terms of $L$-values of newforms as follows:

	\begin{equation}\label{eq:mu88}
		\mu(8\pm8i\sqrt{3})=\frac{1}{6}\big((3+i\sqrt{3})L'_\textnormal{\texttt{48ca1}}(0)+(3-i\sqrt{3})L'_{\textnormal{\texttt{48ca2}}}(0)\big),
	\end{equation}

	\begin{equation}
		\mu(8\pm24i\sqrt{7})=\frac{1}{14}\big((21+5i\sqrt{7})L'_{\textnormal{\texttt{28da1}}}(0)+(21-5i\sqrt{7})L'_{\textnormal{\texttt{28da2}}}(0)\big),
	\end{equation}

	\begin{equation}
		\mu\biggl(\frac{1}{2}\pm\frac{3i\sqrt{7}}{2}\biggr)=\frac{1}{14}\big((7+3i\sqrt{7})L'_{\textnormal{\texttt{28da1}}}(0)+(7-3i\sqrt{7})L'_{\textnormal{\texttt{28da2}}}(0)\big),
	\end{equation}

	\begin{equation}\label{eq:2x2_N2_28da}
		\mu\biggl(\frac{31}{2}\pm\frac{3i\sqrt{7}}{2}\biggr)=\frac{1}{7}\big((7+i\sqrt{7})L'_{\textnormal{\texttt{28da1}}}(0)+(7-i\sqrt{7})L'_{\textnormal{\texttt{28da2}}}(0)\big),
	\end{equation}

	\begin{equation}
		\nu(4\pm10i\sqrt{2})=\frac{3}{8}\big((2+i\sqrt{2})L'_{\textnormal{\texttt{24fa1}}}(0)+(2-i\sqrt{2})L'_{\textnormal{\texttt{24fa2}}}(0)\big),
	\end{equation}

	\begin{equation}
		\nu(32\pm 8i\sqrt{11})=\frac{3}{44}\big((11+i\sqrt{11})L'_{\textnormal{\texttt{33da1}}}(0)+(11-i\sqrt{11})L'_{\textnormal{\texttt{33da2}}}(0)\big),
	\end{equation}

	\begin{equation}\label{eq:3x3Samart15}
		\nu(6-6\sqrt[3]{2}+18\sqrt[3]{4})=\frac{1}{2}\big(-3L'_{\textnormal{\texttt{27aa1}}}(0)+L'_{\textnormal{\texttt{36aa1}}}(0)+L'_{\textnormal{\texttt{108aa1}}}(0)\big),
	\end{equation}

	\begin{equation}\label{eq:3x3_N3_27_108}
		\nu(6-6\rho\sqrt[3]{2}+18\bar{\rho}\sqrt[3]{4})=\frac{1}{4}\big(3L'_{\textnormal{\texttt{27aa1}}}(0)+L'_{\textnormal{\texttt{108aa1}}}(0)\big),
	\end{equation}

	\begin{equation}\label{eq:3x3Samart15TG241}
		\nu(17766+14094\sqrt[3]{2}+11178\sqrt[3]{4})=\frac{1}{2}\big(3L'_{\textnormal{\texttt{27aa1}}}(0)+3L'_{\textnormal{\texttt{36aa1}}}(0)+L'_{\textnormal{\texttt{108aa1}}}(0)\big),
	\end{equation}

	\begin{equation}\label{eq:3x3Samart15TG242}
		\nu(17766+14094\rho\sqrt[3]{2}+11178\bar{\rho}\sqrt[3]{4})=\frac{1}{2}\big(-6L'_{\textnormal{\texttt{27aa1}}}(0)+3L'_{\textnormal{\texttt{36aa1}}}(0)+L'_{\textnormal{\texttt{108aa1}}}(0)\big),
	\end{equation}

	\begin{equation}
		\nu(96+56\sqrt[3]{3}-72\sqrt[3]{9}) = -L'_{\textnormal{\texttt{27aa1}}}(0)-L'_{\textnormal{\texttt{243aa1}}}(0)-L'_{\textnormal{\texttt{243ab1}}}(0),
	\end{equation}

	\begin{equation}\label{eq:3x3_N3_243}
		\nu(96+56\rho\sqrt[3]{3}-72\bar{\rho}\sqrt[3]{9})=\frac{1}{6}\big(-L'_{\textnormal{\texttt{243aa1}}}(0)+L'_{\textnormal{\texttt{243ab1}}}(0)\big),
	\end{equation}

	\begin{equation}
		\nu\big(216(-18964-13149\sqrt[3]{3}-9117\sqrt[3]{9})\big)=\frac{1}{3}\big(9L'_{\textnormal{\texttt{27aa1}}}(0)-L'_{\textnormal{\texttt{243aa1}}}(0)+L'_{\textnormal{\texttt{243ab1}}}(0)\big),
	\end{equation}

	\begin{equation}\label{eq:3x3_N3_27}
		\nu\big(216(-18964-13149\rho\sqrt[3]{3}-9117\bar{\rho}\sqrt[3]{9})\big) = \frac{1}{6}\big(-9L'_{\textnormal{\texttt{27aa1}}}(0)+4L'_{\textnormal{\texttt{243aa1}}}(0)+5L'_{\textnormal{\texttt{243ab1}}}(0)\big),
	\end{equation}
	where $\rho=e^{\pm\frac{2\pi i}{3}}$.
\end{theorem}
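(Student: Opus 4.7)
The plan is to mirror the strategy used in Theorems~\ref{T:2x2det} and~\ref{T:4x4det}, but specialized to individual Mahler measures at non-totally real CM values of $t$. For each identity in the statement, I would begin with the corresponding CM point $\tau_0 \in F_P$ (respectively $F_Q$) recorded in \cite{ST24}, which gives the primitive quadratic form $[a,b,c]$ attached to $\tau_0$. Applying Proposition~\ref{P:MMasLcusp} then converts the Mahler measure into an expression
\[
\mu(t) \;=\; C \cdot L(\Theta_{P,\tau_0},2) \qquad \text{or} \qquad \nu(t) \;=\; C \cdot L(\Theta_{Q,\tau_0},2),
\]
where $C$ is an explicit algebraic constant and $\Theta_{P,\tau_0}$ (resp.\ $\Theta_{Q,\tau_0}$) is an explicit cusp form in $\mathcal{S}_2(\Gamma_0(N),\chi_D)$ for the level $N$ and discriminant character $\chi_D$ determined by $\tau_0$.

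The central step is then to decompose each theta series as a linear combination of shifts of specific CM newforms. For the identities involving non-real $t$ such as \eqref{eq:mu88}--\eqref{eq:2x2_N2_28da}, the relevant newforms carry a non-trivial Nebentypus and have coefficient field an imaginary quadratic extension of $\QQ$, so they come in complex-conjugate Galois orbits (for instance \texttt{48ca1} and \texttt{48ca2}); this accounts for the conjugate pairs of coefficients $3\pm i\sqrt{3}$, $21\pm 5i\sqrt{7}$, and so on, appearing on the right-hand sides. For each identity I would list the candidate CM newforms with the correct level, Nebentypus, and CM field matching $\QQ(\tau_0)$, propose the tentative expansion
\[
\Theta_{\ast,\tau_0}(\tau) \;=\; \sum_i \sum_{d \mid N/N_i} c_{i,d}\, f_i(d\tau)
\]
with the $c_{i,d}$ detected by \textsf{PSLQ} applied to the $q$-expansions, and then verify the equality rigorously by comparing Fourier coefficients up to the Sturm bound for $\mathcal{S}_2(\Gamma_0(N),\chi_D)$.

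Once each theta-series decomposition is secured, substituting back into Proposition~\ref{P:MMasLcusp} expresses $\mu(t)$ or $\nu(t)$ as an explicit linear combination of $L(f_i,2)$, and the functional equation \eqref{E:FE} for weight-$2$ newforms converts each $L(f_i,2)$ into a constant multiple of $L'(f_i,0)$. Collecting the resulting constants then yields the claimed formulas \eqref{eq:mu88}--\eqref{eq:3x3_N3_27}. The hard part will be the higher-level cubic cases such as \eqref{eq:3x3Samart15}--\eqref{eq:3x3_N3_27}, where $t$ generates a non-Galois cubic field, the associated quadratic form has comparatively large discriminant, and several distinct newforms together with several of their shifts $f_i(d\tau)$ appear simultaneously in the decomposition; here the Sturm bound grows large and the algebraic bookkeeping of the coefficients becomes delicate. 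A useful consistency check throughout is that $\Theta_{\ast,\tau_0}$ must be invariant under simultaneous complex conjugation of the newforms and their coefficients, a symmetry that is manifestly satisfied by every right-hand side in the theorem.
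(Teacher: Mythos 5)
Your proposal is correct and matches the paper's method: the authors likewise prove these identities by taking the CM points from their data file, applying Proposition~\ref{P:MMasLcusp} to write each $\mu(t)$ or $\nu(t)$ as an $L$-value of an explicit theta series, decomposing that theta series into shifts of CM newforms verified via Sturm bounds (exactly as in their proofs of Theorems~\ref{T:2x2det} and~\ref{T:4x4det} and the worked examples of Section~\ref{S:examples}), and converting $L(f,2)$ to $L'(f,0)$ by the functional equation. The paper gives no further detail beyond this, so your write-up is essentially the intended argument.
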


\begin{remark}
	Note that \eqref{eq:3x3Samart15} is proven in \cite[Theorem~2.9]{Samart15}; \eqref{eq:3x3Samart15TG241} and \eqref{eq:3x3Samart15TG242} are conjectured in \cite{Samart15} and proven in \cite[Corollary~1.3]{TG25}; the Mahler measures in \eqref{eq:mu88}--\eqref{eq:2x2_N2_28da}, \eqref{eq:3x3_N3_27_108}, and \eqref{eq:3x3_N3_243}--\eqref{eq:3x3_N3_27} are expressed as $L$-values of theta functions in \cite{TG25,TGW24}.	
\end{remark}

We conclude this section by providing some results of this kind with non-totally real quartic $t$. Similar identities for the remaining values of $t$ in \cite{ST24} can be deduced using the method mentioned above.

\begin{theorem}
	The following identities are true:
	\begin{equation*}
		\mu(8\pm6\sqrt[4]{2}\mp9\sqrt[4]{8}) = \frac{1}{4}\big(L'_{\textnormal{\texttt{32aa1}}}(0)+L'_{\textnormal{\texttt{64aa1}}}(0)\mp L'_{\textnormal{\texttt{256ab1}}}(0)\mp L'_{\textnormal{\texttt{256ac1}}}(0)\big),
	\end{equation*}

	\begin{equation*}
		\mu(8\pm6i\sqrt[4]{2}\pm9i\sqrt[4]{8}) = \frac{1}{8}\big(-L'_{\textnormal{\texttt{256ab1}}}(0)+ L'_{\textnormal{\texttt{256ac1}}}(0)\big),
	\end{equation*}

	\begin{equation*}
		\begin{split}
			\nu\big(-73-70\sqrt{2}\pm i(161+115\sqrt{2})\big) & =\frac{3}{32}\big((8+i\sqrt{2})L'_{\textnormal{\texttt{24fa1}}}(0)+(8-i\sqrt{2})L'_{\textnormal{\texttt{24fa2}}}(0) \\
			& \quad +(2+i\sqrt{2})L'_{\textnormal{\texttt{96fa1}}}(0)+(2-i\sqrt{2})L'_{\textnormal{\texttt{96fa2}}}(0)\big),
		\end{split}
	\end{equation*}

	\begin{equation*}
		\begin{split}
			\nu\big(-73+70\sqrt{2}\pm i(161-115\sqrt{2})\big) & = \frac{3}{32}\big((-10+7i\sqrt{2})L'_{\textnormal{\texttt{24fa1}}}(0)+(-10-7i\sqrt{2})L'_{\textnormal{\texttt{24fa2}}}(0) \\
			& \quad +(4-i\sqrt{2})L'_{\textnormal{\texttt{96fa1}}}(0)+(4+i\sqrt{2})L'_{\textnormal{\texttt{96fa2}}}(0)\big).
		\end{split}
	\end{equation*}
\end{theorem}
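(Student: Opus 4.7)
The plan is to mimic the strategy carried out in Theorem~\ref{thm:MM_for_N3}. For each of the four displayed Mahler-measure quantities, I would first locate the underlying CM point. Each of the four quartic values of $t$ lies in $\mathsf{T}_{P,4}$ or $\mathsf{T}_{Q,4}$, so the data file \cite{ST24} produced by Algorithm~\ref{A:CM} supplies an explicit CM point $\tau_0\in F_P$ (respectively $F_Q$) with $t_P(\tau_0)=t$ (respectively $t_Q(\tau_0)=t$). For instance, the values $t=8\pm6\sqrt[4]{2}\mp9\sqrt[4]{8}$ and $t=8\pm6i\sqrt[4]{2}\pm9i\sqrt[4]{8}$ form a single Galois orbit over $\QQ$ of CM points of discriminant $-64$ and level dividing $256$, while the values $t=-73\mp70\sqrt{2}\pm i(161\pm115\sqrt{2})$ correspond to CM points of discriminant $-24$ or $-96$ on $F_Q$.

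Step two is to apply Proposition~\ref{P:MMasLcusp} at each $\tau_0$. This produces, for every $t$ in the statement, an explicit constant $c(t)$ and a theta series $\Theta_{P,\tau_0}\in \mathcal{S}_2(\Gamma_0(N),\chi)$ or $\Theta_{Q,\tau_0}\in \mathcal{S}_2(\Gamma_0(N),\chi)$ such that
\[\mu(t)=\frac{c(t)}{\pi^2}L(\Theta_{P,\tau_0},2) \quad\text{or}\quad \nu(t)=\frac{c(t)}{\pi^2}L(\Theta_{Q,\tau_0},2).\]
For the $\mu$-identities the level $N$ is a divisor of $256$, and for the $\nu$-identities it is a divisor of $96$. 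The third step is to express each $\Theta$ as an explicit $\CC$-linear combination of elements of
\[\bigcup_i \{f_i(d\tau):d\mid (N/N_i)\},\]
where $f_i$ runs over the newforms appearing on the right-hand side of the identity (namely \texttt{32aa1}, \texttt{64aa1}, \texttt{256ab1}, \texttt{256ac1} for the $\mu$-lines, and \texttt{24fa1}, \texttt{24fa2}, \texttt{96fa1}, \texttt{96fa2} for the $\nu$-lines). The candidate linear combination is first guessed via \textsf{PSLQ} applied to high-precision numerics for the Fourier coefficients; it is then rigorously certified by comparing Fourier expansions up to the Sturm bound for $\Gamma_0(N)$. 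Once the decomposition of $\Theta$ is established, linearity of $L(\cdot,s)$ gives $\mu(t)$ or $\nu(t)$ as a $\CC$-linear combination of $L$-values $L(f_i,2)$; the functional equation for the weight-$2$ newforms $f_i$ rewrites $L(f_i,2)$ as a scalar multiple of $L'(f_i,0)$, and collecting terms produces precisely the asserted identity.

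The main obstacle is Step three, the identification of $\Theta_{P,\tau_0}$ or $\Theta_{Q,\tau_0}$ inside a cusp-form space with non-rational character, in particular for the $\mu$-identities where the relevant newforms \texttt{256ab1} and \texttt{256ac1} have coefficient field $\QQ(i)$ and appear with imaginary coefficients in the final answer. One has to carefully match the action of complex conjugation on both sides: the pair of quartic $t$ values that are permuted by $\sqrt[4]{2}\mapsto -\sqrt[4]{2}$ (or by $i\mapsto -i$) corresponds on the modular side to swapping the Galois-conjugate newforms, and this forces the particular shape of the coefficients $\pm L'_{\texttt{256ab1}}(0)\pm L'_{\texttt{256ac1}}(0)$ and $(8\pm i\sqrt{2})L'_{\texttt{24fa1/2}}(0)$ etc. Apart from this bookkeeping, every remaining step is a routine application of Proposition~\ref{P:MMasLcusp} and Sturm-bound verification, exactly parallel to the computations already exhibited in Theorem~\ref{thm:MM_for_N3} and illustrated in Section~\ref{S:examples}.
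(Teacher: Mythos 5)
Your proposal follows essentially the same route as the paper, which proves these identities only by reference to "the method mentioned above": read off the CM points from the data in \cite{ST24}, apply Proposition~\ref{P:MMasLcusp} to express $\mu(t)$ or $\nu(t)$ as $c(t)\pi^{-2}L(\Theta,2)$, decompose $\Theta$ into the relevant newforms via a \textsf{PSLQ} guess certified by Sturm bounds, and convert $L(f_i,2)$ to $L'(f_i,0)$ by the functional equation. One small inaccuracy in your discussion of the "main obstacle": the newforms $f_{\texttt{256ab1}}$ and $f_{\texttt{256ac1}}$ lie in the trivial-character space $\mathcal{S}_2(\Gamma_0(256))$ and have rational coefficients (they correspond to CM elliptic curves over $\QQ$), so the irrationality enters only through the coefficients of the linear combination expressing $\Theta_{P,\tau_0}$, not through the coefficient fields of these newforms; this does not affect the validity of the argument.
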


\section{Examples}\label{S:examples}
In this section, we give some examples illustrating results in Table~\ref{2x2identities_table} and Table~\ref{4x4identities_table}. In each example, the Sturm bound is crucial for identifying the cusp forms $\Theta_{P,\tau_i}(\tau)$ or $\Theta_{Q,\tau_i}(\tau)$ as linear combinations of those in Table~\ref{newforms_table}, so we briefly recall its definition here. The Sturm bound for the space $\mathcal{M}_k\left(\Gamma_0(N)\right)$ is the integer
\[B(k,N):=\left\lfloor\frac{k m}{12}\right\rfloor,\]
where $\displaystyle m:=\left[\mathrm{SL}_2(\mathbb{Z}): \Gamma_0(N)\right]=N \prod_{p \mid N}\left(1+\frac{1}{p}\right).$ For $k>1$, if $f$ and $g$ are elements of $\mathcal{M}_k\left(\Gamma_0(N)\right)$ and their first $B(k,N)$ Fourier coefficients coincide, then $f=g$ (see \cite[Corollary~9.20]{Stein07}).
\begin{example}[$\#1$]\label{ex:2x2_no.1}
	Let $\tau_1=[16,0,1],\tau_2=[16,16,5]$. Then we have
	\[t=t_P(\tau_1)=8+6\sqrt{2},\quad t^\sigma=t_P(\tau_2)=8-6\sqrt{2}.\]
	The curve $\mathcal{F}_t$ is isomorphic over $K =\Q(\sqrt{2})$ to $\mathcal{F}_{t^\sigma}$ and can be identified as the curve \href{https://www.lmfdb.org/EllipticCurve/2.2.8.1/32.1/a/4}{\texttt{2.2.8.1-32.1-a4}} in LMFDB. We know from the LMFDB that $\mathcal{F}_t$ is the base change of the curves $\tilde{E}$: \href{https://www.lmfdb.org/EllipticCurve/Q/32/a/3}{\texttt{32.a3}} and $\tilde{E}^d$: \href{https://www.lmfdb.org/EllipticCurve/Q/64/a/3}{\texttt{64.a3}} over $\Q$. Hence by Proposition \ref{P:Lsplit}, we have
	\begin{equation}\label{eq:no_1_LE}
		L(\mathcal{F}_t,s)=L(\tilde{E},s)L(\tilde{E}^d,s)=L_\texttt{32aa1}(s)L_\texttt{64aa1}(s).
	\end{equation}
	Since $64$ is the smallest positive integer $N$ such that
	\[N\begin{pmatrix} 2 & 0 \\ 0 & 32 \end{pmatrix}^{-1} \text{ and } N\begin{pmatrix} 10 & 16 \\ 16 & 32 \end{pmatrix}^{-1}\]
	are even matrices and the Nebentypus $\left(\frac{16}{\cdot}\right)$ is trivial, we know that
	\begin{align*}
		\Theta_{P,\tau_1}(\tau) & = 32 q+64 q^2+64 q^5-96 q^9-128 q^{10}-192q^{13}+\cdots,  \\
		\Theta_{P,\tau_2}(\tau) & =32 q-64 q^2+64 q^5-96 q^9+128 q^{10}-192q^{13}+\cdots 
	\end{align*}
	are in $\mathcal{S}_2\left(\Gamma_0(64)\right)$ (see Proposition~\ref{P:MMasLcusp}). By using the fact that $\mathcal{M}_2\left(\Gamma_0(64)\right)$ has Sturm bound $16$ (see the character orbit space \href{https://www.lmfdb.org/ModularForm/GL2/Q/holomorphic/64/2/a/}{\texttt{64.2.a}}), one can easily verify that
	\[\Theta_{P,\tau_1}(\tau)=64f_\texttt{32aa1}(2\tau)+32f_\texttt{64aa1}(\tau),\quad \Theta_{P,\tau_2}(\tau)=-64f_\texttt{32aa1}(2\tau)+32f_\texttt{64aa1}(\tau).\]
	Thus, according to \eqref{eq:muasLcusp}, we re-prove \eqref{eq:no.1_MM_as_Lcusp}. To apply \textsf{PSLQ} to detect an identity of the form \eqref{eq:2x2detected}, we first evaluate $\frac{1}{\pi^4}L(\mathcal{F}_t,2)$ using \textsc{PARI/GP}. For example, the script
	\begin{verbatim}
			\pb 180 \\ Set the absolute error to 1/2^180 < 1/10^50
			K = nfinit(a^2 - 2);
			t = 8 + 6*a;
			E = ellinit([0, t*(t - 8), 0, 16*t^2, 0], K);
			lfun(E, 2)/Pi^4 \end{verbatim}
 	gives
	\[\frac{1}{\pi^4}L(\mathcal{F}_t,2)\approx 0.0096323473958955184559425659563904945672448438650884.\]
	On the other hand, we can use the hypergeometric formulas for $\mu(t)$ (see \cite[Eq.~(2-35)]{LR07}) to obtain the following numerical values:
	\begin{align*}
		\mu(t)^2 & \approx 1.4423982917768556815965690001765551621082444171345, \\
		\mu(t^\sigma)^2 & \approx 0.20945782510222931923592055775857185750090440240319.
	\end{align*}
	\textsf{PSLQ} then detects the identity
	\begin{equation*}
		\mu(t)^2-\mu(t^\sigma)^2\overset{?}{=}\frac{128}{\pi^4}L(\mathcal{F}_{t},2)=\frac{1}{2}L''(\mathcal{F}_{t},0),
	\end{equation*}
where the latter equality follows from the functional equation \eqref{E:FE}. The first equality can be verified immediately by using \eqref{eq:no.1_MM_as_Lcusp} and \eqref{eq:no_1_LE}.
\end{example}

\begin{example}[$\#11$]\label{ex:2x2_no.11}
	Let $\tau_1=[7,7,2],\tau_2=[1,-1,2]$. Then we have
	\[t=t_P(\tau_1)=-2032+768 \sqrt{7},\quad t^\sigma=t_P(\tau_2)=-2032-768 \sqrt{7}.\]
	The curves $\mathcal{F}_t$ and $\mathcal{F}_{t^\sigma}$ over $K=\Q(\sqrt{7})$ are in the isogeny class \href{https://www.lmfdb.org/EllipticCurve/2.2.28.1/1.1/a/}{\texttt{2.2.28.1-1.1-a}}. Since no curve in this isogeny class is the base change of a curve over $\Q$, the Weil restriction $A=\Res_{K/\Q}(\mathcal{F}_t)$ must be a simple abelian variety of dimension $2$ over $\Q$. It then follows from Proposition \ref{P:Lsplit} (i), there exists a CM newform $f$ with quadratic coefficient field $M$ such that
	\[L(\mathcal{F}_t,s)=L(f,s)L(f^\sigma,s),\]
	where $\sigma$ is the nontrivial element in $\Gal(M/\Q)$. Indeed, according to LMFDB, the $L$-function of $\mathcal{F}_t$, which is \href{https://www.lmfdb.org/L/4/28e2/1.1/c1e2/0/1}{\texttt{4-28e2-1.1-c1e2-0-1}}, can be factorized as
	\[L(\mathcal{F}_t,s)=L_\texttt{28da1}(s)L_\texttt{28da2}(s).\]
	Note that in this case, $\Theta_{P,\tau_1}$ and $\Theta_{P,\tau_2}$ are in the character orbit space \href{https://www.lmfdb.org/ModularForm/GL2/Q/holomorphic/112/2/f/}{\texttt{112.2.f}} with Sturm bound $32$. Hence we can verify that
	\begin{align*}
		\Theta_{P,\tau_1}(\tau)&=-i \sqrt{7}f_{\texttt{28da1}}(\tau)+i \sqrt{7}f_{\texttt{28da2}}(\tau)+\frac{7-i \sqrt{7}}{2}f_{\texttt{28da1}}(2\tau)+\frac{7+i \sqrt{7}}{2}f_{\texttt{28da2}}(2\tau),  \\
		\Theta_{P,\tau_2}(\tau)&=f_{\texttt{28da1}}(\tau)+f_{\texttt{28da2}}(\tau)+\frac{1+i\sqrt{7}}{2}f_{\texttt{28da1}}(2\tau)+\frac{1-i\sqrt{7}}{2}f_{\texttt{28da2}}(2\tau).
	\end{align*}
	Therefore, by \eqref{eq:muasLcusp}, we have
	\begin{align*}
		\mu(t) & =\frac{7}{\pi^2}((\sqrt{7}-9i)L_{\texttt{28da1}}(2)+(\sqrt{7}+9i)L_{\texttt{28da2}}(2)),  \\
		\mu(t^\sigma) & = \frac{1}{\pi^2}((9\sqrt{7}+7i)L_{\texttt{28da1}}(2)+(9\sqrt{7}-7i)L_{\texttt{28da2}}(2)),
	\end{align*}
	and the identity
	\[\mu(t)^2+7\mu(t^\sigma)^2=\frac{17248}{\pi^4}L(\mathcal{F}_t,2)=176L''(\mathcal{F}_t,0)\]
	(detected by \textsf{PSLQ}) can be verified at once.
\end{example}

\begin{example}[$\#15$]
	Let $\tau_1=[1,0,1],\tau_2=[2,2,1]$. Then we have
	\[t=t_Q(\tau_1)=270+162 \sqrt{3},\quad t^\sigma=t_Q(\tau_2)=270-162 \sqrt{3},\]
	both are not in $(-1,27)$.
	The curves $\mathcal{C}_t$ and $\mathcal{C}_{t^\sigma}$ are isogenous over $K=\Q(\sqrt{3})$. In this case, the isogeny class is \href{https://www.lmfdb.org/EllipticCurve/2.2.12.1/9.1/a/}{\texttt{2.2.12.1-9.1-a}} and $A=\Res_{K/\Q}(\mathcal{C}_t)$ is also simple. According to the LMFDB, we can determine that
	\[L(\mathcal{C}_t,s)=L_\texttt{36ba1}(s)L_\texttt{36ba2}(s).\]
	By using the fact that the character orbit space \href{https://www.lmfdb.org/ModularForm/GL2/Q/holomorphic/36/2/b/}{\texttt{36.2.b}} has Sturm bound $12$, one can verify that
	\[\Theta_{Q,\tau_1}(\tau)=2(f_{\texttt{36ba1}}(\tau)+f_{\texttt{36ba2}}(\tau)),\quad \Theta_{Q,\tau_2}(\tau)=2i\sqrt{2}(f_{\texttt{36ba1}}(\tau)-f_{\texttt{36ba2}}(\tau)).\]
	Therefore, by \eqref{eq:nuasLcusp}, we have
	\[\nu(t)=\frac{27\sqrt{3}}{4\pi^2}(L_{\texttt{36ba1}}(2)+L_{\texttt{36ba2}}(2)),\quad \nu(t^\sigma)=\frac{27i\sqrt{6}}{4\pi^2}(L_{\texttt{36ba1}}(2)-L_{\texttt{36ba2}}(2)),\]
	and the identity
	\[2\nu(t)^2+\nu(t^\sigma)^2=\frac{2187}{2\pi^4}L(\mathcal{C}_t,2)=\frac{27}{4}L''(\mathcal{C}_t,0)\]
	can be verified immediately.
\end{example}

\begin{example}[$\#23$] \label{ex:2x2_no.23}
	Let $\tau_1=[3,-3,13],\tau_2=[39,3,1]$. Then we have
	\[t=t_Q(\tau_1)=-163296-35640 \sqrt{21},\quad t^\sigma=t_Q(\tau_2)=-163296+35640 \sqrt{21}.\]
	Note that $\nu(t^\sigma)=\tilde{\nn}(t^\sigma)$ since $t^\sigma=26.99776\cdots$ is in $(-1,27)$. The curves $\mathcal{C}_t$ and $\mathcal{C}_{t^\sigma}$ over $K=\Q(\sqrt{21})$ are in the isogeny class \href{https://www.lmfdb.org/EllipticCurve/2.2.21.1/441.1/d/}{\texttt{2.2.21.1-441.1-d}}. Although neither $\mathcal{C}_t$ nor $\mathcal{C}_{t^\sigma}$ is the base change of a curve over $\Q$, the curve \href{https://www.lmfdb.org/EllipticCurve/2.2.21.1/441.1/d/6}{\texttt{2.2.21.1-441.1-d6}} in the same isogeny class is the base change of the curves $\tilde{E}$: \href{https://www.lmfdb.org/EllipticCurve/Q/441/d/2}{\texttt{441.d2}} and $\tilde{E}^d$: \href{https://www.lmfdb.org/EllipticCurve/Q/441/e/2}{\texttt{441.e2}} over $\Q$. Hence, we have
	\[L(\mathcal{C}_t,s)=L(\tilde{E},s)L(\tilde{E}^d,s)=L_\texttt{441ad1}(s)L_\texttt{441ae1}(s).\]
	One can verify that
	\[\Theta_{Q,\tau_1}(\tau)=6(f_{\texttt{441ad1}}(\tau)+f_{\texttt{441ae1}}(\tau)),\quad \Theta_{Q,\tau_2}(\tau)=21(-f_{\texttt{441ad1}}(\tau)+f_{\texttt{441ae1}}(\tau))\]
	by using the fact that the Sturm bound for $\mathcal{M}_2\left(\Gamma_0(441)\right)$ is $112$. (Note that the LMFDB only provides the first $100$ Fourier coefficients for each form. As an alternative, we can use the \textsc{PARI/GP} command \texttt{mfcoefs} to compute more coefficients to meet the Sturm bound here.) Therefore, by \eqref{eq:nuasLcusp}, we have
	\[\nu(t)=\frac{189}{8\pi^2}(L_{\texttt{441ad1}}(2)+L_{\texttt{441ae1}}(2)),\quad \nu(t^\sigma)=\frac{1323}{8\pi^2}(L_{\texttt{441ad1}}(2)-L_{\texttt{441ae1}}(2)),\]
	and
	\[49\nu(t)^2-\nu(t^\sigma)^2=\frac{1750329}{16\pi^4}L(\mathcal{C}_t,2)=-\frac{9}{2}L''(\mathcal{C}_t,0)\]
	can be verified immediately.
\end{example}

\begin{example}[$\#28$]\label{ex:4x4_no.28}
	Let $\tau_1=[16,16,7],\tau_2=[48,48,13],\tau_3=[48,0,1],\tau_4=[16,0,3]$. Then we have $t_P(\tau_i)=t_i,i=1,2,3,4$ as $\#28$ in Table \ref{quarticGalorbs_table}. The curves $\mathcal{F}_{t_i}$ are all defined over $K=\Q(\sqrt{2},\sqrt{3})$ and are in the isogeny class \href{https://www.lmfdb.org/EllipticCurve/4.4.2304.1/16.1/b/}{\texttt{4.4.2304.1-16.1-b}}. Note that the curve \href{https://www.lmfdb.org/EllipticCurve/4.4.2304.1/16.1/b/2}{\texttt{4.4.2304.1-16.1-b2}} in this isogeny class is the base change of the curves $\tilde{E}_1$: \href{https://www.lmfdb.org/EllipticCurve/2.2.12.1/16.1/a/8}{\texttt{2.2.12.1-16.1-a8}} and $\tilde{E}_2$: \href{https://www.lmfdb.org/EllipticCurve/2.2.12.1/256.1/c/8}{\texttt{2.2.12.1-256.1-c8}} over $\Q(\sqrt{3})$. By Proposition \ref{P:Lsplit}, we have
	\begin{equation}\label{eq:no_28_LE}
		L(\mathcal{F}_{t_i},s)=L(\tilde{E}_1,s)L(\tilde{E}_2,s)=L_\texttt{48ca1}(s)L_\texttt{48ca2}(s)L_\texttt{192ca1}(s)L_\texttt{192ca2}(s).
	\end{equation}
	Since the Sturm bound for the character orbit space \href{https://www.lmfdb.org/ModularForm/GL2/Q/holomorphic/192/2/c/}{\texttt{192.2.c}} is $64$, one can verify that
	\begin{align*}
		\Theta_{P,\tau_1}(\tau) & = -32(1-i\sqrt{3})f_\texttt{48ca1}(4\tau)-32(1+i\sqrt{3})f_\texttt{48ca2}(4\tau) \\
		& \quad +16(f_\texttt{192ca1}(\tau)+f_\texttt{192ca2}(\tau)), \\
		\Theta_{P,\tau_2}(\tau) & = -32(3+i\sqrt{3})f_\texttt{48ca1}(4\tau)-32(3-i\sqrt{3})f_\texttt{48ca2}(4\tau) \\
		& \quad +16 i \sqrt{3}(f_\texttt{192ca1}(\tau)-f_\texttt{192ca2}(\tau)), \\
		\Theta_{P,\tau_3}(\tau) & = 32(3+i\sqrt{3})f_\texttt{48ca1}(4\tau)+32(3-i\sqrt{3})f_\texttt{48ca2}(4\tau) \\
		& \quad +16 i \sqrt{3}(f_\texttt{192ca1}(\tau)-f_\texttt{192ca2}(\tau)), \\
		\Theta_{P,\tau_4}(\tau) & = 32(1-i\sqrt{3})f_\texttt{48ca1}(4\tau)+32(1+i\sqrt{3})f_\texttt{48ca2}(4\tau) \\
		& \quad +16(f_\texttt{192ca1}(\tau)+f_\texttt{192ca2}(\tau)).
	\end{align*}
	Therefore, by \eqref{eq:muasLcusp}, we have
	\begin{equation}\label{eq:no.28_MM_as_Lcusp}
		\begin{split}
			\mu(t_1) & = -\frac{1}{2\pi^2}((\sqrt{3}\!-\!3i)L_\texttt{48ca1}(2)\!+\!(\sqrt{3}\!+\!3i)L_\texttt{48ca2}(2)\!-\!8\sqrt{3}(L_\texttt{192ca1}(2)\!+\!L_\texttt{192ca2}(2))),\\
			\mu(t_2) & = -\frac{3}{2\pi^2}((\sqrt{3}+i)L_\texttt{48ca1}(2)+(\sqrt{3}-i)L_\texttt{48ca2}(2)-8i(L_\texttt{192ca1}(2)-L_\texttt{192ca2}(2))),\\
			\mu(t_3) & = \frac{3}{2\pi^2}((\sqrt{3}+i)L_\texttt{48ca1}(2)+(\sqrt{3}-i)L_\texttt{48ca2}(2)+8i(L_\texttt{192ca1}(2)-L_\texttt{192ca2}(2))),\\
			\mu(t_4) & = \frac{1}{2\pi^2}((\sqrt{3}\!-\!3i)L_\texttt{48ca1}(2)\!+\!(\sqrt{3}\!+\!3i)L_\texttt{48ca2}(2)\!+\!8\sqrt{3}(L_\texttt{192ca1}(2)\!+\!L_\texttt{192ca2}(2))).
		\end{split}
	\end{equation}
	Let $\m_i=\mu(t_i)$. To apply \textsf{PSLQ} to detect an identity of the form \eqref{eq:4x4regulator_expanded}, we need to compute $L(\mathcal{F}_{t_i},2)$ numerically with high precision. As indicated in the (sketched) proof of Theorem~\ref{T:4x4det}, by \eqref{eq:no_28_LE}, we can instead use the \textsc{PARI/GP} script
	\begin{verbatim}
			\pb 850 \\ Set the absolute error to 1/2^850 < 1/10^250
			mf1 = mfinit([48, 2, Mod(47, 48)], 0);
			mf2 = mfinit([192, 2, Mod(191, 192)], 0);
			lf1 = mfeigenbasis(mf1); lf2 = mfeigenbasis(mf2);
			f1 = lf1[1]; \\ Newform orbit 48.2.c.a (the unique orbit in 48.2.c)
			f2 = lf2[1]; \\ Newform orbit 192.2.c.a, the index may be different
			[L1, L2] = lfunmf(mf1, f1); [L3, L4] = lfunmf(mf2, f2);
			lfun(L1, 2)*lfun(L2, 2)*lfun(L3, 2)*lfun(L4, 2) \end{verbatim}
	to compute $L_\texttt{48ca1}(2)L_\texttt{48ca2}(2)L_\texttt{192ca1}(2)L_\texttt{192ca2}(2)$ to more than $250$ decimal places. Then, with the working precision set to $100$, \textsf{PSLQ} suggests that \eqref{eq:4x4regulator_expanded} seems to hold with
	\begin{align*}
		& a_{1}=9,\ a_{2}=1,\ a_{3}=1,\ a_{4}=9,\ a_{12}=6,\ a_{13}=6,\ a_{14}=-18,  \\
		& a_{23}=-2,\ a_{24}=6,\ a_{34}=6,\ a_{1234}=-24,\ r=331776.
   \end{align*}
   One can verify it by using \eqref{eq:no_28_LE} and \eqref{eq:no.28_MM_as_Lcusp}.
\end{example}

\begin{example}[$\#50$]\label{ex:4x4_no.50}
	Let $\tau_1\!=\![4, -4, 59],\tau_2\!=\![116, 116, 31],\tau_3\!=\![124, 116, 29],\tau_4\!=\![236, 4, 1]$. Then we have $t_P(\tau_i)=t_i,i=1,2,3,4$ as $\#50$ in Table \ref{quarticGalorbs_table}. The curves $\mathcal{F}_{t_i}$ are isogenous to each other over $K=\Q(\sqrt{2},\sqrt{29})$. It can be checked using \textsc{SageMath} that none of the curves in the isogeny class of $\mathcal{F}_{t_i}$ is the base change of an elliptic curve over a proper subfield of $K$, so for any $t\in\{t_1,t_t,t_3,t_4\}$, we have that $A=\Res_{K/\QQ}(\mathcal{F}_t)$ is simple. By using Milne's formula, we also have that
	\[\mathcal{N}_\QQ(A)=\Norm(\mathcal{N}_K(\mathcal{F}_t))d_{K}^2=189859297755136=3712^4.\]
	Thus, we know from the proof of Proposition \ref{P:Lsplit} that there exists a CM newform $f\in\mathcal{S}_2(\Gamma_1(3712))$ with some quartic number field $M$ as the coefficient field and
	\[L(\mathcal{F}_{t_i},s)=\prod_{\sigma\in \Gal(M/\QQ)}L(f^\sigma,s).\]
	Since the CM field of $\mathcal{F}_{t_i}$ is $F=\Q(\sqrt{-58})$,  we know from CM theory that $f$ has CM by $F$. Then, by results of Ribet \cite[Theorem~4.5]{Ribet77}, there is a Gr\"{o}ssencharacter $\psi$ of $F$ modulo $\mathfrak{m}$ such that
	\[f(\tau)=\sum_{\mathfrak{a} \text{ integral}}\psi(\mathfrak{a})q^{N(\mathfrak{a})}\in \mathcal{S}_2(\Gamma_0(|d_F|N(\mathfrak{m})),\chi_F\chi_{\psi}),\]
where $d_F$ is the discriminant of $F$, $\chi_F$ is the quadratic character associated to $F$, and $\chi_{\psi}$ is the Dirichlet character modulo $N(\mathfrak{m})$ given by $n\mapsto\psi((n))/n$. In this example, we have $d_F=-232$, $\chi_F=\left(\frac{-232}{\cdot}\right)$, and $|d_F|N(\mathfrak{m})=3712$. Since $\chi_F$ has odd parity and no weight $2$ modular forms exist for odd Nebentypus characters, we know immediately that $\chi_{\psi}$ is an odd Dirichlet character modulo $16$. By checking the LMFDB, we find that the only odd Dirichlet characters modulo $16$ are \href{https://www.lmfdb.org/Character/Dirichlet/16/c}{\texttt{16.c}}, \href{https://www.lmfdb.org/Character/Dirichlet/16/d}{\texttt{16.d}}, and \href{https://www.lmfdb.org/Character/Dirichlet/16/f}{\texttt{16.f}}, which correspond to the level $3712$ modular form character orbit spaces \href{https://www.lmfdb.org/ModularForm/GL2/Q/holomorphic/3712/2/g/}{\texttt{3712.2.g}}, \href{https://www.lmfdb.org/ModularForm/GL2/Q/holomorphic/3712/2/e/}{\texttt{3712.2.e}}, and \href{https://www.lmfdb.org/ModularForm/GL2/Q/holomorphic/3712/2/m/}{\texttt{3712.2.m}}, respectively. Since the newforms in these spaces have not yet been added to the LMFDB at present, we can use the script
	\begin{verbatim}
			\\ The default maximum stack size may need to be enlarged.
			mfg = mfinit([3712, 2, Mod(1217, 3712)], 0);
			mfe = mfinit([3712, 2, Mod(3073, 3712)], 0);
			mfm = mfinit([3712, 2, Mod(289, 3712)], 0);
			lfg = mfeigenbasis(mfg) \\ Compute newforms in 3712.2.g
			lfe = mfeigenbasis(mfe) \\ Compute newforms in 3712.2.e
			lfm = mfeigenbasis(mfm) \\ Compute newforms in 3712.2.m\end{verbatim}
to compute the newform orbits, which takes about $6$ minutes on a regular desktop (Intel Core i5-1135G7 Processor, 2.40GHz, 16GB RAM). From the output, we know that there are $9,8$, and $6$ newform orbits in \href{https://www.lmfdb.org/ModularForm/GL2/Q/holomorphic/3712/2/g/}{\texttt{3712.2.g}}, \href{https://www.lmfdb.org/ModularForm/GL2/Q/holomorphic/3712/2/e/}{\texttt{3712.2.e}}, and \href{https://www.lmfdb.org/ModularForm/GL2/Q/holomorphic/3712/2/m/}{\texttt{3712.2.m}}, respectively. Among these orbits, there is only one orbit in \href{https://www.lmfdb.org/ModularForm/GL2/Q/holomorphic/3712/2/g/}{\texttt{3712.2.g}} with dimension $4$, which consists of the following embeddings:
	\begin{align*}
		f_{\texttt{3712g1}}(\tau) & = q-3 q^9+5 q^{25}+\sqrt{29} q^{29}+2 i \sqrt{2} q^{31}-2 \sqrt{29} q^{37}+\cdots, \\
		f_{\texttt{3712g2}}(\tau) & = q-3 q^9+5 q^{25}+\sqrt{29} q^{29}-2 i \sqrt{2} q^{31}-2 \sqrt{29} q^{37}+\cdots, \\
		f_{\texttt{3712g3}}(\tau) & = q-3 q^9+5 q^{25}-\sqrt{29} q^{29}-2 i \sqrt{2} q^{31}+2 \sqrt{29} q^{37}+\cdots, \\
		f_{\texttt{3712g4}}(\tau) & = q-3 q^9+5 q^{25}-\sqrt{29} q^{29}+2 i \sqrt{2} q^{31}+2 \sqrt{29} q^{37}+\cdots.
	\end{align*}
	Hence it appears that
	\begin{equation}\label{eq:L_as_new_50}
		L(\mathcal{F}_{t_i},s)=L_\texttt{3712g1}(s)L_\texttt{3712g2}(s)L_\texttt{3712g3}(s)L_\texttt{3712g4}(s),
	\end{equation}
	where $L_{\texttt{3712g}i}(s):=L(f_{\texttt{3712g}i},s)$. We can again justify \eqref{eq:L_as_new_50}	by comparing the first few Dirichlet coefficients of both sides. By using the fact that \href{https://www.lmfdb.org/ModularForm/GL2/Q/holomorphic/3712/2/g/}{\texttt{3712.2.g}} has Sturm bound $960$, one can verify that
	\begin{align*}
		\Theta_{P,\tau_1}(\tau) & = 2(f_\texttt{3712g1}(\tau)+f_\texttt{3712g2}(\tau)+f_\texttt{3712g3}(\tau)+f_\texttt{3712g4}(\tau)), \\
		\Theta_{P,\tau_2}(\tau) & = 2\sqrt{29}(f_\texttt{3712g1}(\tau)+f_\texttt{3712g2}(\tau)-f_\texttt{3712g3}(\tau)-f_\texttt{3712g4}(\tau)), \\
		\Theta_{P,\tau_3}(\tau) & = -2i\sqrt{2}(f_\texttt{3712g1}(\tau)-f_\texttt{3712g2}(\tau)-f_\texttt{3712g3}(\tau)+f_\texttt{3712g4}(\tau)), \\
		\Theta_{P,\tau_4}(\tau) & = -2i\sqrt{58}(f_\texttt{3712g1}(\tau)-f_\texttt{3712g2}(\tau)+f_\texttt{3712g3}(\tau)-f_\texttt{3712g4}(\tau)).
	\end{align*}
	Therefore, according to \eqref{eq:muasLcusp}, we have
	\begin{equation}\label{eq:no.50_MM_as_Lcusp}
		\begin{split}
			\mu(t_1) & = \frac{4\sqrt{58}}{\pi^2}(L_\texttt{3712g1}(2)+L_\texttt{3712g2}(2)+L_\texttt{3712g3}(2)+L_\texttt{3712g4}(2)),\\
			\mu(t_2) & = \frac{116\sqrt{2}}{\pi^2}(L_\texttt{3712g1}(2)+L_\texttt{3712g2}(2)-L_\texttt{3712g3}(2)-L_\texttt{3712g4}(2)),\\
			\mu(t_3) & = -\frac{8i\sqrt{29}}{\pi^2}(L_\texttt{3712g1}(2)-L_\texttt{3712g2}(2)-L_\texttt{3712g3}(2)+L_\texttt{3712g4}(2)),\\
			\mu(t_4) & = -\frac{232i}{\pi^2}(L_\texttt{3712g1}(2)-L_\texttt{3712g2}(2)+L_\texttt{3712g3}(2)-L_\texttt{3712g4}(2)).
		\end{split}
	\end{equation}
	We can further use the script
	\begin{verbatim}
			\pb 850 \\ Set the absolute error to 1/2^850 < 1/10^250
			f = lfg[3]; \\ Select the newform orbit, the index may be different
			[L1, L2, L3, L4] = lfunmf(mfg, f);
			lfun(L1, 2)*lfun(L2, 2)*lfun(L3, 2)*lfun(L4, 2)\end{verbatim}
	to compute $L(\mathcal{F}_{t_i},2)$. Let $\m_i=\mu(t_i)$. \textsf{PSLQ} then detects that \eqref{eq:4x4regulator_expanded} seems to hold with
	\begin{align*}
		& a_{1}=3364,\ a_{2}=4,\ a_{3}=841,\ a_{4}=1,\ a_{12}=-232,\ a_{13}=3364,\ a_{14}=116,  \\
		& a_{23}=116,\ a_{24}=4,\ a_{34}=-58,\ a_{1234}=-464,\ r=741637881856.
   \end{align*}
   This can be verified by using \eqref{eq:L_as_new_50} and \eqref{eq:no.50_MM_as_Lcusp}.
\end{example}

\begin{example}[$\#54$]\label{ex:4x4_no.54}
	Let $\tau_1=[12,-12,7],\tau_2=[21,12,4],\tau_3=[48,0,1],\tau_4=[3,0,16]$. Then we have $t_Q(\tau_i)=t_i,i=1,2,3,4$ as $\#54$ in Table \ref{quarticGalorbs_table}. The curves $\mathcal{C}_{t_i}$ are all defined over $K=\Q(\sqrt{2},\sqrt{3})$ and are in the isogeny class \href{https://www.lmfdb.org/EllipticCurve/4.4.2304.1/324.1/a/}{\texttt{4.4.2304.1-324.1-a}}. Note that the curve \href{https://www.lmfdb.org/EllipticCurve/4.4.2304.1/324.1/a/7}{\texttt{4.4.2304.1-324.1-a7}} in this isogeny class is the base change of the curves $\tilde{E}_1$: \href{https://www.lmfdb.org/EllipticCurve/Q/36/a/4}{\texttt{36.a4}}, $\tilde{E}_2$: \href{https://www.lmfdb.org/EllipticCurve/Q/144/a/4}{\texttt{144.a4}}, $\tilde{E}_3$: \href{https://www.lmfdb.org/EllipticCurve/Q/576/e/4}{\texttt{576.e4}}, and $\tilde{E}_4$: \href{https://www.lmfdb.org/EllipticCurve/Q/576/f/4}{\texttt{576.f4}} over $\Q$. By Proposition \ref{P:Lsplit}, we have
	\begin{equation}\label{eq:L_as_new_54}
		L(\mathcal{C}_{t_i},s)=\prod_{i=1}^{4}L(\tilde{E}_i,s)=L_\texttt{36aa1}(s)L_\texttt{144aa1}(s)L_\texttt{576ae1}(s)L_\texttt{576af1}(s).	
	\end{equation}
	Since the Sturm bound for $\mathcal{M}_2\left(\Gamma_0(576)\right)$ is $192$, one can verify that
	\begin{align*}
		\Theta_{Q,\tau_1}(\tau) & = -6(f_\texttt{36aa1}(\tau)+f_\texttt{144aa1}(\tau)-f_\texttt{576ae1}(\tau)-f_\texttt{576af1}(\tau)) \\
		& \quad +24(f_\texttt{36aa1}(4\tau)-4f_\texttt{36aa1}(16\tau)+f_\texttt{144aa1}(4\tau)), \\
		\Theta_{Q,\tau_2}(\tau) & = -6(f_\texttt{36aa1}(\tau)-f_\texttt{144aa1}(\tau)+f_\texttt{576ae1}(\tau)-f_\texttt{576af1}(\tau)) \\
		& \quad +24(f_\texttt{36aa1}(4\tau)-4f_\texttt{36aa1}(16\tau)-f_\texttt{144aa1}(4\tau)), \\
		\Theta_{Q,\tau_3}(\tau) & = 12(f_\texttt{36aa1}(\tau)-f_\texttt{144aa1}(\tau)-f_\texttt{576ae1}(\tau)+f_\texttt{576af1}(\tau))\\
		& \quad -48(f_\texttt{36aa1}(4\tau)-4f_\texttt{36aa1}(16\tau)-f_\texttt{144aa1}(4\tau)), \\
		\Theta_{Q,\tau_4}(\tau) & = 3(f_\texttt{36aa1}(\tau)+f_\texttt{144aa1}(\tau)+f_\texttt{576ae1}(\tau)+f_\texttt{576af1}(\tau))\\
		& \quad -12(f_\texttt{36aa1}(4\tau)-4f_\texttt{36aa1}(16\tau)+f_\texttt{144aa1}(4\tau)).
	\end{align*}
	Therefore, by \eqref{eq:nuasLcusp}, we have
	\begin{equation}\label{eq:no.54_MM_as_Lcusp}
		\begin{split}
			\nu(t_1) & = -\frac{27}{16\pi^2}(13 L_\texttt{36aa1}(2) + 12 L_\texttt{144aa1}(2) - 16 (L_\texttt{576ae1}(2) + L_\texttt{576af1}(2))), \\
			\nu(t_2) & = \frac{27}{8\pi^2}(13 L_\texttt{36aa1}(2) - 12 L_\texttt{144aa1}(2) + 16 (L_\texttt{576ae1}(2) - L_\texttt{576af1}(2))), \\
			\nu(t_3) & = \frac{27}{8\pi^2}(13 L_\texttt{36aa1}(2) - 12 L_\texttt{144aa1}(2) -16 (L_\texttt{576ae1}(2) - L_\texttt{576af1}(2))), \\
			\nu(t_4) & = \frac{27}{32\pi^2}(13 L_\texttt{36aa1}(2) + 12 L_\texttt{144aa1}(2) + 16 (L_\texttt{576ae1}(2) + L_\texttt{576af1}(2))).
		\end{split}
	\end{equation}
	Let $\m_i=\nu(t_i)$. According to \textsf{PSLQ}, \eqref{eq:4x4regulator_expanded} seems to hold with
	\begin{align*}
		& a_{1}=16,\ a_{2}=1,\ a_{3}=1,\ a_{4}=256,\ a_{12}=-8,\ a_{13}=-8,\ a_{14}=-128,  \\
		& a_{23}=-2,\ a_{24}=-32,\ a_{34}=-32,\ a_{1234}=-64,\ r=1326476736.
   \end{align*}
   This can be verified by using \eqref{eq:L_as_new_54} and \eqref{eq:no.54_MM_as_Lcusp}.
\end{example}

\begin{example}[$\#71$]\label{ex:4x4_no.71}
	Let $\tau_1=[3,-3,47],\tau_2=[15,-15,13],\tau_3=[39, 15, 5],\tau_4=[141,3,1]$. Then we have $t_Q(\tau_i)=t_i,i=1,2,3,4$ as $\#71$ in Table \ref{quarticGalorbs_table}. The curves $\mathcal{C}_{t_i}$ are isogenous to each other over $K=\Q(\sqrt{5},\sqrt{37})$. Similar to Example~\ref{ex:4x4_no.50}, it can be checked that the elliptic curves in the isogeny class of $\mathcal{C}_{t_i}$ are not base changes, so for any $t\in\{t_1,t_t,t_3,t_4\}$, the abelian variety $A=\Res_{K/\QQ}(\mathcal{C}_t)$ is simple. Since $\mathcal{N}_\QQ(A)=7685231450625=1665^4$ by Milne's formula, we know that there exists a CM newform $f\in\mathcal{S}_2(\Gamma_1(1665))$ with some quartic coefficient field $M$ such that
	\[L(\mathcal{C}_{t_i},s)=\prod_{\sigma\in \Gal(M/\QQ)}L(f^\sigma,s).\]
	In this example, the CM field of $\mathcal{C}_{t_i}$ is $F=\Q(\sqrt{-555})$, whose discriminant is $-555$, so the Nebentypus attached to $f$ is $\left(\frac{-555}{\cdot}\right)\chi$, where $\chi$ a Dirichlet character modulo $3$. This implies that $\left(\frac{-555}{\cdot}\right)\chi$ must take values in $\Q$; i.e. it is a character of degree $1$. The data of all level $1665$ newform orbits with Nebentypus of degree $1$ are available in the LMFDB (see the modular form space \href{https://www.lmfdb.org/ModularForm/GL2/Q/holomorphic/1665/2/}{\texttt{1665.2}}) and \href{https://www.lmfdb.org/ModularForm/GL2/Q/holomorphic/1665/2/g/a/}{\texttt{1665.2.g.a}} is the unique orbit of dimension $4$ among them. We can thus immediately conclude that
	\begin{equation}\label{eq:L_as_new_71}
		L(\mathcal{C}_{t_i},s)=L_\texttt{1665ga1}(s)L_\texttt{1665ga2}(s)L_\texttt{1665ga3}(s)L_\texttt{1665ga4}(s).
	\end{equation}
By using the fact that the Sturm bound for the character orbit space \href{https://www.lmfdb.org/ModularForm/GL2/Q/holomorphic/1665/2/g/}{\texttt{1665.2.g}} is $456$, one can verify that
	\begin{align*}
		\Theta_{Q,\tau_1}(\tau) & = 3(f_\texttt{1665ga1}(\tau)+f_\texttt{1665ga2}(\tau)+f_\texttt{1665ga3}(\tau)+f_\texttt{1665ga4}(\tau)),  \\
		\Theta_{Q,\tau_2}(\tau) & = 3 i \sqrt{5}(f_\texttt{1665ga1}(\tau)+f_\texttt{1665ga2}(\tau)-f_\texttt{1665ga3}(\tau)-f_\texttt{1665ga4}(\tau)), \\
		\Theta_{Q,\tau_3}(\tau) & = -\frac{3 \sqrt{37}}{2}(f_\texttt{1665ga1}(\tau)-f_\texttt{1665ga2}(\tau)+f_\texttt{1665ga3}(\tau)-f_\texttt{1665ga4}(\tau)), \\
		\Theta_{Q,\tau_4}(\tau) & = -\frac{3 i \sqrt{185}}{2}(f_\texttt{1665ga1}(\tau)-f_\texttt{1665ga2}(\tau)-f_\texttt{1665ga3}(\tau)+f_\texttt{1665ga4}(\tau)).
	\end{align*}
	Therefore, according to \eqref{eq:nuasLcusp}, we have
	\begin{equation}\label{eq:no.71_MM_as_Lcusp}
		\begin{split}
			\nu(t_1) & = \frac{27\sqrt{185}}{16\pi^2}(L_\texttt{1665ga1}(2)+L_\texttt{1665ga2}(2)+L_\texttt{1665ga3}(2)+L_\texttt{1665ga4}(2)), \\
			\nu(t_2) & = \frac{135i\sqrt{37}}{16\pi^2}(L_\texttt{1665ga1}(2)+L_\texttt{1665ga2}(2)-L_\texttt{1665ga3}(2)-L_\texttt{1665ga4}(2)), \\
			\nu(t_3) & = \frac{999\sqrt{5}}{16\pi^2}(L_\texttt{1665ga1}(2)-L_\texttt{1665ga2}(2)+L_\texttt{1665ga3}(2)-L_\texttt{1665ga4}(2)), \\
			\nu(t_4) & = \frac{4995i}{16\pi^2}(L_\texttt{1665ga1}(2)-L_\texttt{1665ga2}(2)-L_\texttt{1665ga3}(2)+L_\texttt{1665ga4}(2)).
		\end{split}
	\end{equation}
	\textsf{PSLQ} detects that \eqref{eq:4x4regulator_expanded} seems to hold with
	\begin{align*}
	 	& a_{1}=8761600,\ a_{2}=350464,\ a_{3}=6400,\ a_{4}=256,\ a_{12}=3504640,\ a_{13}=-473600,  \\
	 	& a_{14}=94720 ,\  a_{23}=94720,\ a_{24}=-18944,\ a_{34}=2560,\ a_{1234}=-378880,\\
		& r=622503747500625.
	\end{align*}
	This can be verified by using \eqref{eq:L_as_new_71} and \eqref{eq:no.71_MM_as_Lcusp}.
\end{example}

\section{Closing remarks}\label{S:remarks}
We conclude this paper by giving a few remarks about our and some  related results from both theoretical and computational perspectives. Firstly, it is crucial to assume that the fields of definition of elliptic curves in Theorem~\ref{T:2x2det} and Theorem~\ref{T:4x4det} (and their potential generalizations) are totally real. This is clearly visible in the quadratic case. For example, let $K=\QQ(\sqrt{-3})$, $\Gal(K/\QQ)=\langle\sigma\rangle$, and $t=8+8i\sqrt{3}\in K$. Then the curve $\mathcal{F}_t$ can be identified as the curve \href{https://www.lmfdb.org/EllipticCurve/2.0.3.1/256.1/CMa/2}{\texttt{2.0.3.1-256.1-CMa2}} or \href{https://www.lmfdb.org/EllipticCurve/2.0.3.1/256.1/CMb/2}{\texttt{2.0.3.1-256.1-CMb2}} (depending on which embedding $K\xhookrightarrow{}\CC$ is used). According to the data in the LMFDB, its $L$-function is still the product of modular $L$-functions, namely $L(\mathcal{F}_t,s)=L_\textnormal{\texttt{48ca1}}(s)L_\textnormal{\texttt{48ca2}}(s)$. However, since $\mu(t)=\mu(\bar{t})=\mu(t^\sigma)$, the (non-zero) determinant of the form \eqref{eq:2x2regulator} must be a non-zero rational multiple of $\mu(t)^2$. By \eqref{eq:mu88} and \eqref{E:FE}, this quantity is certainly not a rational multiple of $L''(\mathcal{F}_t,0)$, so \eqref{2x2Gamma04} is invalid for this curve. Note, however, that this example does not invalidate the Beilinson's conjecture. To elaborate further, recall that every CM elliptic curve defined over a number field is a $\QQ$\textit{-curve}; i.e., it is isogenous (over $\overline{\QQ}$) to all its Galois conjugates. Although $\mathcal{F}_t$ has CM, $\mathcal{F}_t$ is not $K$-isogenous to $\mathcal{F}_{t^\sigma}$, so we cannot canonically construct (linearly independent) elements in $K_2^T(\mathcal{F}_t)$ using pullbacks of an isogeny. In other words, over the field $K$, the determinant \eqref{eq:2x2regulator} in this case may differ from the Beilinson regulator \eqref{eq:nxn_regulator} (for $n=2$). This discrepancy seems to carry over in higher dimensions; for all CM curves defined over non-totally real number fields of degree $d\le 4$ obtained from Proposition~\ref{P:CMPQ}, we are unable to find any simple identity between determinants of Mahler measures and their $L$-values.

Secondly, the reader may observe that there are no identities for $3\times3$ determinants in Section~\ref{S:results}, due to the absence of CM elliptic curves in the families $\mathcal{F}_t$ and $\mathcal{C}_t$ which are defined over totally real cubic fields. On the other hand, we would like to point out evidence of such identity from another family
\[R_t=(1+x)(1+y)(x+y)-txy,\]
whose Mahler measure was also originally studied by Boyd \cite{Boyd98}. Proven Mahler measure formulas for $R_t$ analogous to \eqref{E:mPt2} and \eqref{E:mQt2} are listed in \cite[Table~1]{MS20}. Consider $f(x)=x^3+x^2-2x-1$, which has three distinct roots
\[p_1=-1.8019\cdots,\  p_2=-0.4450\cdots,\ p_3=1.2469\cdots,\]
and let $K$ be the splitting field of $f(x)$, which is the cubic Galois field of smallest discriminant. For each $i=1,2,3$, let $t_i=2(1+p_i)^2/p_i$ and $\m_i=\m(R_{t_i})$. The curve $R_{t}=0$ can be written in a Weierstrass form 
\[E_t: Y^2=X^3+(t^2-4t-8)X^2+16(t+1)X\]
and for any $i=1,2,3$, $E_{t_i}$ can be identified as the curve  
\href{https://www.lmfdb.org/EllipticCurve/3.3.49.1/64.1/a/7}{\texttt{3.3.49.1-64.1-a7}} in LMFDB, which is the base change of an elliptic curve $E$ over $\QQ$ with label \href{https://www.lmfdb.org/EllipticCurve/Q/196/b/2}{\texttt{196.b2}}. Therefore, we have that $L(E_{t_i},s)=L(E,s)L(E\otimes \chi,s)L(E\otimes \chi^2,s),$ where $\chi$ is a character of order $3$. By modularity of $E$, this equation can then be rewritten as 
\begin{equation}\label{eq:Lcubic}
L(E_{t_i},s)=L(f_\texttt{28ea1},s)L(\bar{f}_\texttt{28ea1},s)L(f_\texttt{196ab1},s).
\end{equation}
 In a forthcoming paper \cite{BLW}, Brunault, Liu, and Wang introduce an algorithm based on Rogers-Zudilin method in order to prove certain Mahler measure identities for curves in the family $R_t=0$ defined over number fields. In particular, they give explicit expressions of $\m_1,\m_2,$ and $\m_3$ in terms of linear combinations of $\frac{1}{\pi^2}L(f_\texttt{28ea1},2)$, $\frac{1}{\pi^2}L(\bar{f}_\texttt{28ea1},2)$ and $\frac{1}{\pi^2}L(f_\texttt{196ab1},2)$. By using their result and \eqref{eq:Lcubic}, one can deduce easily that
\begin{equation}\label{eq:3x3det}
\det\begin{pmatrix}
\m_1 & \m_2 & \m_3\\
\m_3 & \m_1 & \m_2\\
\m_2 & \m_3 & \m_1
\end{pmatrix} =\frac{7203}{2\pi^6}L(E_{t_i},2)=\frac{1}{4}L^{(3)}(E_{t_i},0). 
\end{equation}
Note that the curves $E_{t_i}$ have no CM, but they are isogenous to each other over $K$; i.e., they are $\Q$-curves, and the second equality in \eqref{eq:3x3det} follows from the functional equation \eqref{E:FE} for elliptic curves over totally real fields, thanks to Taylor's potential modularity theorem \cite{Taylor02}. Furthermore, under certain assumptions, $\QQ$-curves are known to be strongly modular \cite[Theorem~5.3]{GQ14}. Together with this fact, the existence of \eqref{eq:3x3det} suggests that the results in this paper could be extended beyond the CM case, although more sophisticated techniques will be required when computing Mahler measures of non-CM curves. In particular, for the quadratic case, we formulate the following conjecture based on our numerical experiments.
\begin{conjecture}\label{conj:non-CM}
Let $K$ be a real quadratic field and $t\in K\backslash\mathbb{Q}$. If $\mathcal{F}_t$ (resp. $\mathcal{C}_t$) is a $\mathbb{Q}$-curve which is $K$-isogenous to $\mathcal{F}_{t^\sigma}$ (resp. $\mathcal{C}_{t^\sigma}$), where $\Gal(\Q(t)/\Q)=\langle \sigma \rangle$, then
there exist $a,b\in \Z\setminus\{0\},a>0$ and $r,s\in \Q$ such that
\eqref{2x2Gamma04} (resp. \eqref{2x2Gamma03}) holds.
\end{conjecture}
We collect the relevant data in Conjecture~\ref{conj:non-CM} for non-CM curves over $K=\mathbb{Q}(\sqrt{d})$ with $2\le d\le 17$ in Table~\ref{conj_for_non-CM_Q-curves}. These curves are initially classified using $j$-invariants of $\mathbb{Q}$-curves over quadratic fields stored in the LMFDB. Then we use $\textsc{SageMath}$ to filter out those which are not $K$-isogenous to their Galois conjugates.

\begin{remark}
Note that the conjectural identities for $\mathcal{F}_{8+9\sqrt{2}}, \mathcal{F}_{\frac{49}{2}+\frac{9\sqrt{17}}{2}}$, and $\mathcal{C}_{112+38 \sqrt{5}}$ in Table~\ref{conj_for_non-CM_Q-curves} are immediate consequences of the conjectural identities for $\mu(8\pm 9\sqrt{2}), \mu\left(\frac{49}{2}\pm\frac{9\sqrt{17}}{2}\right)$, and $\nn(112\pm 38 \sqrt{5})$ in \cite{Samart15}. The identities for $\mathcal{F}_{24+8\sqrt{5}}, \mathcal{F}_{\frac{11}{2}+\frac{3\sqrt{13}}{2}}$, and $\mathcal{C}_{20+14\sqrt{2}}$ in Table~\ref{conj_for_non-CM_Q-curves} can be proven using results in \cite{BLW}.
\end{remark}


\section*{Acknowledgements}
The authors would like to thank John Voight, Hang Liu, and François Brunault for helpful discussions, and the anonymous referees for valuable comments on earlier versions of this paper, which greatly helped improve the exposition. The authors are also grateful to Ken Ribet, Xavier Guitart, Edgar Costa, and David Farmer for answering their questions about CM elliptic curves and $L$-functions.  The first author is supported by the National Research Council of Thailand (NRCT) under the Research Grant for Mid-Career Scholar [N41A640153 to D.S.]. The second author is supported by the Natural Science Foundation of Anhui Province (Grant No.~2508085QA017).

\appendix
\section{Data tables}\label{A:tables}


}

\bibliographystyle{amsplain}
\bibliography{detMMref}

\end{document}